\documentclass[11pt,a4paper]{article}
\usepackage[utf8]{inputenc}
\usepackage{amsmath,amssymb,amsfonts}
\usepackage{enumerate}
\usepackage[margin=3cm]{geometry}
\usepackage{hyperref}
\hypersetup{
	colorlinks=true,        % false: boxed links; true: colored links
	linkcolor=blue,         % color of internal links
	citecolor=blue,         % color of links to bibliography
	urlcolor=blue           % color of external links
}
\usepackage{graphicx}
\usepackage{mathtools}
\usepackage[ruled]{algorithm2e}
\usepackage{array}
\usepackage{subcaption}

\flushbottom

%Theorem declarations
\usepackage{amsthm}
\usepackage{thmtools}
\newtheorem{theorem}{Theorem}
\newtheorem{proposition}{Proposition}
\newtheorem{corollary}{Corollary}
\newtheorem{definition}{Definition}
\newtheorem{assumption}{Assumption}

\newtheorem{lemma}{Lemma}
\declaretheorem[style=remark,qed=$\Diamond$,Refname={Remark,Remarks}]{remark}
\declaretheorem[style=remark,qed=$\Diamond$,Refname={Example,Examples}]{example}

%Macros
\DeclareMathOperator{\Id}{Id}

\DeclareMathOperator{\dom}{dom}
\DeclareMathOperator{\ran}{ran}
\DeclareMathOperator{\gra}{gra}
\DeclareMathOperator{\zer}{zer}
\DeclareMathOperator{\Fix}{Fix}
\DeclareMathOperator{\prox}{prox}
\DeclareMathOperator*{\argmin}{argmin}
\newcommand{\Hilbert}{\mathcal{H}}
\newcommand{\R}{\mathbb{R}}

\newcommand{\ball}{\mathbb{B}}
\newcommand{\setto}{\rightrightarrows}
\newcommand{\wto}{\rightharpoonup}
\newcommand*\colvec[1]{\begin{pmatrix}#1\end{pmatrix}}

\title{Strengthened Splitting Methods for Computing Resolvents}
\author{Francisco J.\ Arag\'on Artacho\thanks{Department of Mathematics,
                             University of Alicante,
                             Alicante, \textsc{Spain}.
	                         Email:~\href{mailto:francisco.aragon@ua.es}
	                         {francisco.aragon@ua.es}}
          \and
        Rub\'en Campoy\thanks{Department of Statistics and Operational Research,
                              Universitat de València, Valencia, \textsc{Spain}.
	                          Email:~\href{mailto:ruben.campoy@uv.es}
	                          {ruben.campoy@uv.es}}
          \and
        Matthew K.\ Tam\thanks{School of Mathematics \& Statistics,
	                         The University of Melbourne,
	                         Parkville VIC 3010, \textsc{Australia}.
	                         Email:~\href{mailto:matthew.tam@unimelb.edu.au}
	                         {matthew.tam@unimelb.edu.au}}}

\begin{document}
\maketitle
\begin{abstract}
In this work, we develop a systematic framework for computing the resolvent of the sum of two or more monotone operators which only activates each operator in the sum individually. The key tool in the development of this framework is the notion of the ``strengthening'' of a set-valued operator, which can be viewed as a type of regularisation that preserves computational tractability. After deriving a number of iterative schemes through this framework, we demonstrate their application to best approximation problems, image denoising and elliptic PDEs.
\end{abstract}

\paragraph{Keywords.} monotone operator $\cdot$ resolvent $\cdot$ splitting algorithm $\cdot$ strengthening
\paragraph{MSC2020.} 47H05 $\cdot$ % monotone operators and generalizations
                     90C30 $\cdot$ % nonlinear programming
                     65K05 % numerical math programming methods

\section{Introduction}
The \emph{resolvent} of a monotone operator, as studied by Minty \cite{minty1962monotone} and others \cite{BMW20,bauschke2010general,eckstein1992douglas,rockafellar1976monotone}, is an integral building block of many iterative algorithms. By specialising to the subdifferentials of convex functions, the notion includes the commonly encountered \emph{proximity operator} in the sense of Moreau~\cite{moreau1962fonctions} as a special case, as well as the \emph{nearest point/orthogonal projector} onto a set. In general, evaluating the resolvent at a point is not a straightforward task and involves solving a non-trivial monotone inclusion. Fortunately however, a number of special cases of practice importance have closed form expressions which lend themselves to efficient evaluation, such as the \emph{soft-thresholding operator} \cite[Example~4.1]{lauster2018symbolic}, which arises from $\ell_1$-regularisation, the proximity operator of the \emph{logarithmic barrier} used in interior point approaches \cite{bertocchi2020deep}, as well as many nearest point projectors onto convex sets \cite{aragon2020ORclassroom}. For an extensive list of such examples, the reader is referred to~\cite{chierchia,parikh2014proximal}.

While it is often possible to decompose a monotone operator into a sum of simpler monotone operators, each having easy to compute resolvents, this additive structure does not generally ensure ease of computing its resolvent, except in certain restrictive settings (such as \cite[Proposition~23.32]{bauschke2017}) which have limited applicability. A concrete example requiring computation of the resolvents of a sum arises in fractional programming problems as we show next.
\begin{example}[Fractional programming]
Let $\Hilbert$ be a real Hilbert space. Consider the \emph{fractional programming problem}
\begin{equation}\label{eq:fp}
 \bar{\theta} := \inf_{x\in S}\frac{f(x)}{g(x)},
\end{equation}
where $S\subseteq\Hilbert$ is nonempty, closed and convex, $f\colon\Hilbert\to{]-\infty,+\infty]}$ is proper, lower semicontinous and convex with $f(x)\geq 0$ for all $x\in S$ and  $S\cap\dom f\neq\emptyset$, and $g\colon\Hilbert\to\mathbb{R}$ is either convex or concave, differentiable and satisfies $g(S)\subseteq{]0,M]}$ for some $M>0$. To solve this problem,  a proximal-gradient-type algorithm of the following form was proposed in~\cite{bot2017proximal} (see also \cite{bot2020extrapolated}).\smallskip

\begin{algorithm}[H]
 \textbf{Initialisation.}~Choose $x_0\in S\cap\dom f$ and set $\theta_0:=\frac{f(x_0)}{g(x_0)}$\;
 \For{$k\geq 1$}{
   1.~Choose $\eta_k>0$ according to a specified rule (see \cite[Algorithms~6~\&~9]{bot2017proximal})\;
   2.~Compute $x_k$ by solving
     \begin{equation}\label{eq:fp-step2}
     x_k:=\argmin_{x\in S}\left[f(x)+\frac{1}{2\eta_k}\bigl\|x-\left(x_{k-1}-\theta_k\eta_k\nabla g(x_{k-1})\right)\bigr\|^2\right];
     \end{equation}
   3.~Set $\theta_k:=\frac{f(x_k)}{g(x_k)}$\;
 }
 \caption{Proximal-gradient-type method \cite{bot2017proximal} for \eqref{eq:fp}.\label{alg:fp}}
\end{algorithm}\smallskip

Note that the subproblem \eqref{eq:fp-step2} in Step~2 of Algorithm~\ref{alg:fp} amounts to evaluating the proximity operator of $\eta_k(f+\iota_S)$ at the point $q:=x_{k-1}-\theta_k\eta_k\nabla g(x_{k-1})$ where $\iota_S$ denotes the indicator function of the set $S$. If the subdifferential sum rule holds for $f$ and $\iota_S$, then $\partial(f+\iota_S)=\partial f+N_S$, where $\partial f$ denotes the subdifferential of a function $f$ and $N_S$ denotes the normal cone to $S$. In this case, evaluating the aforementioned proximity operator requires the computation of the resolvent of the sum $\eta_k(\partial f+N_S)$ at $q$, where both $A:=\partial f$ and $B:=\partial\iota_S=N_S$ are set-valued maximally monotone operators.
\end{example}

To overcome these difficulties, it is natural to consider iterative algorithms for computing the resolvent of the sum of monotone operators whose iteration uses the resolvents of the individual monotone operators. To this end, Combettes~\cite{combettes2009iterative} considered algorithms based on the Douglas--Rachford method and a Dykstra-type method in the product space (in the sense of Pierra \cite{pierra1984decomposition}). More recently, Arag\'on and Campoy~\cite{aragon2019computing} developed a method based on a modification of the Douglas--Rachford method for two operators which does not necessarily require a product space, which was further studied in \cite{alwadani2018asymptotic,dao2019resolvent}. Building on the work of Moudafi \cite{moudafi2014computing}, Chen and Tang \cite{chen2019iterative} devised an algorithm for computing the resolvent of sum of a monotone and a composite operator. A different approach based on a composition formula involving generalised resolvents has also been considered in works by Adly and Bourdin \cite{adly2019decomposition}, and Adly, Bourdin and Caubet \cite{adly2019proximity}.

In most of the aforementioned works, the main focus of the analysis has been on specific algorithms. In this work, our approach is different. We instead focus on the interplay between properties of the operators themselves and the underlying problem formulations. By doing so, we unify many of the existing algorithms for computing resolvents in the literature within a framework based on \emph{strengthenings} of monotone operators. This notion can be viewed as a regularisation of an operator which preserves certain computationally favourable properties. Moreover, this framework has the advantage of providing transparency and insight into the mechanism of existing methods as well as providing a way to systematically develop new algorithms.

\bigskip

The remainder of this work is structured as follows. We begin in Section~\ref{s:preliminaries} by recalling preliminaries for use in the sequel. In Section~\ref{s:strengthening}, we introduce and study the notion of \emph{strengthenings} of set-valued operators, including establishing relationships between the resolvents, continuity properties and zeros of operators and their strengthenings. Next we turn to iterative algorithms, with Section~\ref{s:fb methods} focusing on methods which incorporate forward steps and Section~\ref{s:resolvent iterations} focusing on methods which incorporate only backward steps. Finally, in Section~\ref{s:applications}, we apply our results to devise algorithms for three different applications: best approximation with three sets, ROF-type imaging denoising, and elliptic PDEs with partially blinded Laplacians. In addition, we also provide an alternative proof of Ryu's three operator splitting method \cite[Section~4]{ryu2019uniqueness} in Appendix~\ref{s:appendix}, which also covers convergence of its shadow sequence in the infinite dimensional setting.

\section{Preliminaries}\label{s:preliminaries}
Throughout this paper, $\Hilbert$ is a real Hilbert space equipped with inner product $\langle\cdot , \cdot\rangle$ and induced norm $\|\cdot\|$. We abbreviate \emph{norm convergence} of sequences in $\Hilbert$ with $\to$  and we use~$\rightharpoonup$ for \emph{weak convergence}. We denote the closed ball centered at $x\in\Hilbert$ of radius $\delta>0$ by $\ball(x,\delta)$.

\subsection{Operators}
Given a non-empty set $D\subseteq\Hilbert$, $A:D\setto\Hilbert$ denotes a \emph{set-valued operator} that maps any point from $D$ to a subset of $\Hilbert$, i.e., $A(x)\subseteq \Hilbert$ for all $x\in D$. In the case when $A$ always maps to singletons, i.e., $A(x)=\{u\}$ for all $x\in D$, $A$ is said to be a \emph{single-valued mapping} and it is denoted by $A:D\to\Hilbert$. In an abuse of notation, we may write $A(x)=u$ when $A(x)=\{u\}$. Note that one can always write $A:\Hilbert\setto\Hilbert$ by setting $A(x):=\emptyset$ for all $x\not\in D$. The \emph{domain}, the \emph{range}, the \emph{graph}, the set of \emph{fixed points} and the set of \emph{zeros} of $A$, are denoted, respectively, by $\dom A$, $\ran A$, $\gra A$, $\Fix A$ and $\zer A$;~i.e.,
\begin{gather*}
\begin{align*}
\dom A&:=\left\{x\in\Hilbert : A(x)\neq\emptyset\right\},&
\ran A&:=\left\{u\in\Hilbert : \exists x\in\Hilbert: u\in A(x) \right\},&\\
\gra A&:=\left\{(x,u)\in\Hilbert\times\Hilbert : u\in A(x)\right\},&
\Fix A&:=\left\{x\in\Hilbert : x\in A(x)\right\},
\end{align*}\\
\text{and} \quad \zer A:=\left\{x\in\Hilbert : 0\in A(x)\right\}.
\end{gather*}
The \index{identity operator}\emph{identity operator} is the mapping $\Id:\Hilbert\to\Hilbert$ that maps every point to itself. The \emph{inverse operator} of $A$, denoted by $A^{-1}$, is defined through
$x\in A^{-1}(u) \iff u\in A(x).$
\begin{definition}[$\alpha$-monotonicity]\label{def:alpha monotone}
Let $\alpha\in\mathbb{R}$. An operator $A:\Hilbert\setto\Hilbert$ is \emph{$\alpha$-monotone} if
$$ \langle x-y,u-v\rangle \geq \alpha\|x-y\|^2\quad\forall(x,u),(y,v)\in\gra A. $$
Furthermore, an $\alpha$-monotone operator $A$ is said to be \emph{maximally $\alpha$-monotone} if there exists no $\alpha$-monotone operator $B\colon\Hilbert\setto\Hilbert$ such that $\gra B$ properly contains $\gra A$.
\end{definition}
Depending on the sign of $\alpha$, Definition~\ref{def:alpha monotone} captures three important classes of operators in the literature. Firstly, an operator is monotone (in the classical sense) if it is $0$-monotone. Secondly, an operator is $\alpha$-strongly monotone (in the classical sense) if it is $\alpha$-monotone for $\alpha>0$. And, finally, an operator is $\alpha$-weakly monotone (or $\alpha$-hypomonotone) if it is $\alpha$-monotone for $\alpha<0$.

\begin{definition}[Resolvent operator]
Given an operator $A\colon\Hilbert\setto\Hilbert$, the \emph{resolvent} of $A$ with parameter $\gamma>0$ is the operator $J_{\gamma A}\colon\Hilbert\setto\Hilbert$ defined by $J_{\gamma A}:=(\Id+\gamma A)^{-1}$.
\end{definition}

\begin{proposition}[Resolvents of $\alpha$-monotone operators]\label{prop:Jalpha}
Let $A:\Hilbert\setto\Hilbert$ be $\alpha$-monotone and let $\gamma>0$ such that $1+\gamma\alpha>0$. Then
\begin{enumerate}[(i)]
\item $J_{\gamma A}$ is single-valued,% and $(1+\gamma\alpha)$-cocoercive; (Lemma 3.3)
\item $\dom J_{\gamma A}=\Hilbert$ if and only if $A$ is maximally $\alpha$-monotone.
\end{enumerate}
\end{proposition}
\begin{proof}
See~\cite[Proposition 3.4]{dao2019adaptive}.
\end{proof}

\begin{definition}
	Let $D$ be a nonempty subset of $\Hilbert$ and let $T:D\to\Hilbert$. The operator $T$ is said to be
	\begin{enumerate}[(i)]
		\item \emph{$\kappa$-Lipschitz continuous} for $\kappa>0$ if
		\begin{equation*}
		\|T(x)-T(y)\|\leq \kappa \|x-y\|\quad\forall x,y\in D;
		\end{equation*}
		\item \emph{locally Lipschitz continuous} if, for all $x_0\in D$, there exist
		$\delta,\kappa>0$ such that
		\begin{equation*}
		\|T(y)-T(z)\|\leq \kappa \|y-z\|\quad\forall z,y\in \mathbb{B}(x_0,\delta)\cap D;
		\end{equation*}
		\item \emph{nonexpansive} if it is Lipschitz continuous with constant $1$;
 		\item $\alpha$\emph{-averaged} for  $\alpha\in\,]0,1[$ if there exists a nonexpansive operator $R:D\mapsto\Hilbert$ such that
 		\begin{equation*}
 		T=(1-\alpha)I+\alpha R;
 		\end{equation*}
 		\item $\alpha$\emph{-negatively averaged} for $\alpha\in{]0,1[}$ if $-T$ is $\alpha$-averaged;
		\item $\beta$-\emph{cocoercive} for $\beta>0$ if
		\begin{equation*}
		\langle x-y,T(x)-T(y)\rangle \geq \beta\|T(x)-T(y)\|^2\quad\forall x,y\in D.
		\end{equation*}
	\end{enumerate}
\end{definition}

\begin{remark}[Lipschitz continuity versus cocoercivity]
By the Cauchy--Schwarz inequality, any $\beta$-cocoercive mapping is $\frac{1}{\beta}$-Lipschitz continuous. In general, cocoercivity of an operator is a stronger condition than Lipschitz continuity, except when the operator is the gradient of a differentiable convex function: in this case the Baillon--Haddad theorem states that both notions are equivalent (see, e.g.,~\cite[Corollary~18.17]{bauschke2017}).
\end{remark}

\subsection{Functions and Subdifferentials}
An extended real-valued function ${f:\Hilbert\to]-\infty,+\infty]}$ is said to be \emph{proper} if its \emph{domain}, $\dom f:=\{x\in\Hilbert : f(x)<+\infty\}$, is nonempty. We say $f$ is \emph{lower semicontinuous (lsc)} if, at any $\bar{x}\in\Hilbert$,
$$f(\bar x)\leq \liminf_{x\to\bar x} f(x).$$
A function $f$ is said to be $\alpha$-convex, for $\alpha\in\R$, if $f-\frac{\alpha}{2}\|\cdot\|^2$ is convex; i.e, for all $x,y\in\Hilbert$,
\begin{equation*}
f((1-\lambda)x+\lambda y)\leq \lambda f(x)+(1-\lambda)f(y)-\frac{\alpha}{2}\lambda(1-\lambda)\|x-y\|^2, \quad\forall \lambda\in[0,1].
\end{equation*}
Clearly, $0$-convexity coincides with classical convexity. We say $f$ is \emph{strongly convex} when $\alpha>0$ and \emph{weakly convex} (or \emph{hypoconvex}) when $\alpha<0$.

For any extended real valued function $f$, the \emph{Fenchel conjugate} of $f$ is denoted by $f^*(\phi):=\sup_{x\in\Hilbert}\{\langle x,\phi\rangle -f(x)\}$ for all $\phi\in\Hilbert$. The \emph{Fr\'echet subdifferential} of $f$ at $x\in\dom f$ is given by
\begin{equation*}
\partial f(x):=\left\{u\in\Hilbert : \liminf_{\substack{y\to x\\ y\neq x}} \frac{f(y)-f(x)-\langle u,y-x\rangle}{\|y-x\|}\geq 0\right\}
\end{equation*}
and $\partial f(x):=\emptyset$ at $x\not\in\dom f$. When $f$ is differentiable at $x\in\dom f$, then $\partial f(x)=\{\nabla f(x)\}$ where $\nabla f(x)$ denotes the gradient of $f$ at $x$. When $f$ is convex, then $\partial f(x)$ coincides with the classical \emph{(convex) subdifferential} of $f$ at $x$, which is the set
\begin{equation*}
\{u\in\Hilbert: f(x)+\langle u,y-x\rangle \leq f(y),\,\forall y\in\Hilbert\}.\\
\end{equation*}
Given a nonempty set $C\subseteq\Hilbert$, the \emph{indicator function} of $C$, $\iota_C:\Hilbert\to{]-\infty,+\infty]}$, is defined as $$\iota_C(x):=\left\{\begin{array}{cl}
0, & \text{if } x\in C,\\
+\infty, & \text{if } x\not\in C.\end{array}\right.$$
When $C$ is a convex set, $\iota_C$ is a convex function whose subdifferential becomes the \emph{(convex) normal cone} to $C$, $N_C:\Hilbert\setto\Hilbert$, given by
\begin{equation*}
\partial \iota_C(x)=N_C(x):=\left\{\begin{array}{ll}\{u\in\Hilbert : \langle u, c-x \rangle\leq 0, \, \forall c\in C \}, &\text{if }x\in C,\\
\emptyset, & \text{otherwise.}\end{array}\right.
\end{equation*}

\begin{example}[Monotonicity of subdifferentials and normal cones]\label{ex:proxproj}
The subdifferential and the normal cone are well-known examples of maximally monotone operators.
\begin{enumerate}[(i)]
\item Let $f:\Hilbert\to{]-\infty,+\infty]}$ be proper, lsc and $\alpha$-convex for $\alpha\in\R$. Then, $\partial f$ is a maximally $\alpha$-monotone operator. Furthermore, given $\gamma>0$ such that $1+\gamma\alpha>0$, it holds that $J_{\gamma \partial f}=\prox_{\gamma f}$, where $\prox_{\gamma f}:\Hilbert\setto\Hilbert$ is the \emph{proximity operator} of $f$ (with~parameter~$\gamma$) defined at $x\in\Hilbert$ by
\begin{equation*}
\prox_{\gamma f}(x):=\argmin_{u\in\Hilbert} \left( f(u)+\frac{1}{2\gamma}\|x-u\|^2\right),
\end{equation*}
see, e.g.,~\cite[Lemma~5.2]{dao2019adaptive}.\label{ex:prox}
\item Let $C\subseteq\Hilbert$ be a nonempty, closed and convex set. Then, the normal cone $N_C$ is maximally monotone. Furthermore, $J_{N_C}=P_C$, where $P_C:\Hilbert\setto\Hilbert$ denotes the \emph{projector} onto $C$ defined at $x\in\Hilbert$ by
\begin{equation*}
P_C(x):=\argmin_{c\in C}  \|x-c\|,
\end{equation*}
see, e.g.,~\cite[Example~20.26 and Example~23.4]{bauschke2017}.\qedhere
\end{enumerate}
\end{example}

\section{Strengthenings of set-valued operators}\label{s:strengthening}
In this section, we introduce the notion of the \emph{strengthening} of a set-valued operator and study its properties. This idea appears without name in \cite{dao2019resolvent} which, in turn, builds on the special cases considered in \cite{aragon2019computing}. This concept was motivated by the ideas of \cite{combettes2009iterative}, where a particular case of the strengthening was employed in algorithm analysis.

\begin{definition}[$(\theta,\sigma)$-strengthening]
Let $\theta>0$ and $\sigma\in\R$. Given $A\colon\Hilbert\setto\Hilbert$, the \emph{$(\theta,\sigma)$-strengthening} of $A$ is the operator $A^{(\theta,\sigma)}\colon\Hilbert\setto\Hilbert$ defined by
\begin{equation}\label{eq:strength def}
  A^{(\theta,\sigma)}:=A\circ(\theta\Id)+\sigma\Id.
\end{equation}
\end{definition}

\begin{remark}
In \cite[Definition~3.2]{aragon2019computing}, the authors define the \emph{$\beta$-strengthening} of an operator $A$ for $\beta\in{]0,1[}$ as the operator $A^{(\beta)}:=A^{(\theta,\sigma)}$ with $\theta:=\frac{1}{\beta}$ and $\sigma:=\frac{1-\beta}{\beta}$.
\end{remark}

We recall next the concept of perturbation of an operator, which was studied
in~\cite{BHM14}. We follow the notation used in~\cite{BM17}.

\begin{definition}[Inner perturbation]
Let $A\colon\Hilbert\setto\Hilbert$ and let $w\in\Hilbert$. The \emph{inner $w$-perturbation} of $A$ is the operator $A_w\colon\Hilbert\setto\Hilbert$ defined at $x\in\Hilbert$ by
$$ A_w(x) := A(x-w). $$
\end{definition}

Of particular interest in this work, will be the strengthenings of inner perturbations. In other words, given an operator $A\colon\Hilbert\setto\Hilbert$ and a point $q\in\Hilbert$, we shall study the operator $(A_{-q})^{(\theta,\sigma)}$ which is given by
$$(A_{-q})^{(\theta,\sigma)}=A_{-q}\circ (\theta\Id)+\sigma\Id=A\circ (\theta\Id+q)+\sigma\Id.$$
\begin{remark}\label{r:gbar}
Suppose $f\colon\Hilbert\to{]-\infty,+\infty]}$ is a proper, lsc and $\alpha$-convex function for $\alpha\in\R$. Then the subdifferential sum rule ensures
 $$ ((\partial f)_{-q})^{(\theta,\sigma)} = \partial f\circ(\theta\Id+q)+\sigma\Id = \partial\left(\frac{1}{\theta}f\circ(\theta\Id+q) +\frac{\sigma}{2}\|\cdot\|^2\right).$$
In other words, the strengthening of the inner perturbation of a subdifferential coincides with the subdifferential of the proper, lsc and $(\theta\alpha+\sigma)$-convex function $x\mapsto \frac{1}{\theta}f(\theta x+q) +\frac{\sigma}{2}\|x\|^2$.
\end{remark}

The following property shows that the resolvent of a strengthening can be computed using the resolvent of the original operator, and vice versa.
\begin{proposition}\label{prop:strength_resolvent}
Let $A\colon\Hilbert\setto\Hilbert$, let $q\in\Hilbert$, let $\theta,\gamma>0$ and let $\sigma\in\R$. Then the following assertions hold.
\begin{enumerate}[(i)]
\item\label{prop:strength_resolvent_I} $A$ is (maximally) $\alpha$-monotone if and only if $(A_{-q})^{(\theta,\sigma)}$ is (maximally) $(\theta\alpha+\sigma)$-monotone.
\item\label{prop:strength_resolvent_II} If $1+\gamma\sigma\neq 0$, then
\begin{equation*}
J_{\gamma (A_{-q})^{(\theta,\sigma)}}=\frac{1}{\theta}\left(J_{\frac{\gamma\theta}{1+\gamma\sigma}A}\circ\left(\frac{\theta}{1+\gamma\sigma}\Id+q\right)-q\right).
\end{equation*}
If, in addition, $A$ is maximally $\alpha$-monotone and $1+\gamma(\theta\alpha+\sigma)>0$, then both $J_{\gamma (A_{-q})^{(\theta,\sigma)}}$ and $J_{\frac{\gamma\theta}{1+\gamma\sigma}A}$ are single-valued with full domain.
\end{enumerate}
\end{proposition}
\begin{proof}
See~\cite[Proposition~2.1]{dao2019resolvent}.
\end{proof}

Using the expression \eqref{eq:strength def}, it can be seen that the strengthening of an operator can be evaluated whenever the original operator can be evaluated. Next, we investigate how other properties are preserved under taking strengthenings.

\begin{theorem}\label{th:lip coco}
Let $B\colon\Hilbert\to\Hilbert$, let $q\in\Hilbert$, let $\theta>0$, let $\sigma\in\R$, and let $\kappa>0$. Then the following assertions hold.
\begin{enumerate}[(i)]
    \item\label{it:Lip of streng} $B$ is $\kappa$-Lipschitz continuous if and only if $(B_{-q})^{(\theta,0)}$ is $\kappa\theta$-Lipschitz continuous. Consequently, $(B_{-q})^{(\theta,\sigma)}$ is $(\kappa\theta+|\sigma|)$-Lipschitz continuous.
    \item\label{it:local Lip of streng} $B$ is locally Lipschitz if and only if $(B_{-q})^{(\theta,\sigma)}$ is locally Lipschitz.
    \item\label{it:coco of streng} $B$ is $\beta$-cocoercive if and only if $(B_{-q})^{(\theta,0)}$ is $\frac{\beta}{\theta}$-cocoercive. Consequently, if $\sigma>0$, then $(B_{-q})^{(\theta,\sigma)}$ is $\mu$-cocoercive with
\begin{equation*}%\label{eq:mu}
 \mu := \left(\frac{\theta}{\beta}+\sigma\right)^{-1}.
\end{equation*}
\end{enumerate}
\end{theorem}
\begin{proof}
First note that since Lipschitz continuity and cocoercivity are preserved undertaking inner perturbations, it suffices to prove the result for $q=0$.

\eqref{it:Lip of streng}:~The equivalence follows immediately from definition the $(\theta,0)$-strengthening. Using the identity $B^{(\theta,\sigma )}=B^{(\theta,0)}+\sigma \Id$, we then deduce
\begin{align*}
\|B^{(\theta,\sigma )}(x)-B^{(\theta,\sigma )}(y)\|
&\leq \|B^{(\theta,0)}(x)-B^{(\theta,0)}(y)\|+|\sigma |\|x-y\| \\
&\leq \kappa\theta\|x-y\|+|\sigma |\|x-y\|,
\end{align*}
from which the result follows.
\eqref{it:local Lip of streng}:~The proof is similar to \eqref{it:Lip of streng}.
\eqref{it:coco of streng}:~The equivalence follows immediately from definition of the $(\theta,0)$-strengthening. Next, note that
\begin{equation}\label{eq:young}\begin{aligned}
\frac{\alpha_1\alpha_2}{\alpha_1+\alpha_2}\|u+v\|^2
&\leq \frac{\alpha_1\alpha_2}{\alpha_1+\alpha_2}\left(\|u\|^2+\|v\|^2+2\|u\|\|v\|\right) \\
&\leq \frac{\alpha_1\alpha_2}{\alpha_1+\alpha_2}\left(\|u\|^2+\|v\|^2+\frac{\alpha_1}{\alpha_2}\|u\|^2+\frac{\alpha_2}{\alpha_1}\|v\|^2\right)\\
&= \alpha_1\|u\|^2 + \alpha_2\|v\|^2,
\end{aligned}\end{equation}
for all weights $\alpha_1,\alpha_2>0$ and $u,v\in\Hilbert$. Using the identity $B^{(\theta,\sigma )}=B^{(\theta,0)}+\sigma \Id$ and the {$\frac{\beta}{\theta}$-cocoercivity} of $B^{(\theta,0)}$, followed by an application of \eqref{eq:young} with weights $\alpha_1=\frac{\beta}{\theta}$ and $\alpha_2=\frac{1}{\sigma }$ we obtain
\begin{align*}
 \langle x-y,B^{(\theta,\sigma )}(x)-B^{(\theta,\sigma )}(y)\rangle
 &= \langle x-y, B^{(\theta,0)}(x)-B^{(\theta,0)}(y)\rangle + \frac{1}{\sigma }\|\sigma x-\sigma y\|^2 \\
 & \geq \frac{\beta}{\theta}\|B^{(\theta,0)}(x)-B^{(\theta,0)}(y)\|^2 + \frac{1}{\sigma }\|\sigma x-\sigma y\|^2\\
 %&\geq \frac{\frac{\beta}{\theta}\cdot\frac{1}{\sigma }}{\frac{\beta}{\theta}+\frac{1}{\sigma }}\|B^{(\theta,\sigma )}(x)-B^{(\theta,\sigma )}(y)\|^2 \\
 &\geq\left(\frac{\theta}{\beta}+\sigma \right)^{-1}\|B^{(\theta,\sigma )}(x)-B^{(\theta,\sigma )}(y)\|^2,
\end{align*}
which establishes the result.
\end{proof}

The following proposition characterises the structures of the zeros of the sum of strengthenings of operators in terms of resolvents. It will be key in the development of algorithms in subsequent sections.

\begin{proposition}\label{prop:zero of strengthenings}
Let $A_i\colon\Hilbert\setto\Hilbert$ and $\sigma_i\in\mathbb{R}$ for $i\in\{1,\dots,n\}$ with $\sigma:=\sum_{i=1}^n\sigma_i>0$. Let $q\in\Hilbert$ and $\theta>0$. Then
\begin{equation}\label{eq:strengthening sum}
J_{\frac{\theta}{\sigma}\left(\sum_{i=1}^nA_i\right)}(q) = \left\{\theta x +q: x\in \zer\left(\sum_{i=1}^n((A_i)_{-q})^{(\theta,\sigma_i)}\right) \right\}.
\end{equation}
Consequently, if each $A_i$ is $\alpha_i$-monotone, $\sum_{i=1}^n(\theta\alpha_i+\sigma_i)>0$ and $q\in\ran\left(\Id+\frac{\theta}{\sigma}\sum_{i=1}^nA_i\right)$, then $J_{\frac{\theta}{\sigma}\left(\sum_{i=1}^nA_i\right)}(q)$ is a singleton and $\frac{1}{\theta}\left(J_{\frac{\theta}{\sigma}\left(\sum_{i=1}^nA_i\right)}(q)-q\right)$ is the unique element of \break $\zer\left(\sum_{i=1}^n((A_i)_{-q})^{(\theta,\sigma_i)}\right).$
\end{proposition}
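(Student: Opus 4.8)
The plan is to reduce the whole statement to a single algebraic observation: the sum of the strengthened inner perturbations is again a strengthened inner perturbation of $S := \sum_{i=1}^n A_i$. Unwinding the definitions of the strengthening and the inner perturbation gives
\[
\sum_{i=1}^n ((A_i)_{-q})^{(\theta,\sigma_i)} = S\circ(\theta\Id + q) + \sigma\Id =: M,
\]
so that the entire analysis concerns $\zer M$.

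To obtain \eqref{eq:strengthening sum}, I would run the chain of equivalences $x \in \zer M \iff 0 \in S(\theta x + q) + \sigma x$. Setting $y := \theta x + q$ (so that $\sigma x = \tfrac{\sigma}{\theta}(y-q)$) and using $\sigma>0$ to multiply through by $\tfrac{\theta}{\sigma}>0$, this becomes $q - y \in \tfrac{\theta}{\sigma} S(y)$, i.e. $q \in (\Id + \tfrac{\theta}{\sigma}S)(y)$, i.e. $y \in J_{\frac{\theta}{\sigma}S}(q)$. Since $y = \theta x + q$ and $\theta > 0$, the affine map $x \mapsto \theta x + q$ is a bijection between $\zer M$ and $J_{\frac{\theta}{\sigma}S}(q)$, which is precisely the claimed set equality.

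For the ``consequently'' part, the goal is to upgrade this bijection to a statement about singletons. By Proposition~\ref{prop:strength_resolvent}\eqref{prop:strength_resolvent_I} each $((A_i)_{-q})^{(\theta,\sigma_i)}$ is $(\theta\alpha_i+\sigma_i)$-monotone, and since a sum of $\beta_i$-monotone operators is $(\sum_i\beta_i)$-monotone (immediate from the defining inequality), $M$ is $\rho$-monotone with $\rho := \sum_{i=1}^n(\theta\alpha_i + \sigma_i) > 0$. Strong monotonicity then forces $\zer M$ to contain at most one point, since two zeros $x_1, x_2$ would satisfy $0 = \langle x_1 - x_2, 0 - 0\rangle \geq \rho\|x_1-x_2\|^2$. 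Existence is supplied directly by the range hypothesis: $q \in \ran(\Id + \tfrac{\theta}{\sigma}S)$ means some $y$ obeys $y \in J_{\frac{\theta}{\sigma}S}(q) \neq \emptyset$. Transporting both facts through the bijection shows $J_{\frac{\theta}{\sigma}S}(q)$ is a singleton whose unique element $y$ yields the unique zero $\tfrac{1}{\theta}(y - q)$ of $M$.

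I do not anticipate a serious obstacle, as the argument is essentially definitional unwinding; the only delicate point is the substitution $y = \theta x + q$ combined with the scaling by $\theta/\sigma$, where one must respect the set-valued membership throughout and rely on $\sigma > 0$ to keep the resolvent parameter positive.
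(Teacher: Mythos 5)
Your proposal is correct and follows essentially the same route as the paper: both rewrite the sum of strengthened perturbations as $\sum_{i=1}^n A_i(\theta x+q)+\sigma x$, run the same chain of equivalences via the substitution $y=\theta x+q$ to obtain \eqref{eq:strengthening sum}, and then combine Proposition~\ref{prop:strength_resolvent}\eqref{prop:strength_resolvent_I} with strong monotonicity of the sum and the range hypothesis for the singleton claim. The only cosmetic differences are that you verify ``a strongly monotone operator has at most one zero'' directly rather than citing \cite[Proposition~23.35]{bauschke2017}, and you state the identity $\sum_{i=1}^n((A_i)_{-q})^{(\theta,\sigma_i)}=\left(\sum_{i=1}^nA_i\right)\circ(\theta\Id+q)+\sigma\Id$ explicitly up front, which the paper uses implicitly.
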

\begin{proof}
For convenience, denote $A:=\sum_{i=1}^nA_i$. Then we have
\begin{align*}
  x\in\zer\left(\sum_{i=1}^n((A_i)_{-q})^{(\theta,\sigma_i)}\right) &\iff 0 \in \sum_{i=1}^n\left(A_i(\theta x+q)+\sigma_ix\right)= A(\theta x+q) + \sigma x \\
  &\iff q \in \left( \Id + \frac{\theta}{\sigma}A\right)(\theta x+q) \\
  &\iff \theta x+q \in J_{\frac{\theta}{\sigma} A}(q),
\end{align*}
which establishes \eqref{eq:strengthening sum}.
Now suppose that $A_i$ is $\alpha_i$-monotone for $i=1,\dots,n$. Then $((A_i)_{-q})^{(\theta,\sigma_i)}$ is $(\theta\alpha_i+\sigma_i)$-monotone by Proposition~\ref{prop:strength_resolvent}\eqref{prop:strength_resolvent_I} and hence $\sum_{i=1}^n((A_i)_{-q})^{(\theta,\sigma_i)}$ is $\alpha$-strongly monotone for $\alpha:=\sum_{i=1}^n(\theta\alpha_i+\sigma_i)>0$.
The result follows by noting that a strongly monotone operator has at most one zero (see, e.g.,~\cite[Proposition~23.35]{bauschke2017}) and that $q\in\ran\left(\Id+\frac{\theta}{\sigma}\sum_{i=1}^nA_i\right)$ if and only if $J_{\frac{\theta}{\sigma}\sum_{i=1}^nA_i}(q)\neq\emptyset$.
\end{proof}

\begin{remark}%\label{rem:formulations}
Proposition~\ref{prop:zero of strengthenings} formalises and extends a number of formulations which can be found in the literature.
\begin{enumerate}[(i)]
\item If $\beta=\frac{1}{\theta}\in{]0,1[}$ and $\sigma_1=\dots=\sigma_n=\frac{1-\beta}{\beta}$, then
  $$ \zer\left(\sum_{i=1}^nA_i^{(\beta)}\right) = \beta J_{\frac{1}{n(1-\beta)}\sum_{i=1}^nA_i}(0). $$
  This recovers \cite[Proposition~4.2]{aragon2019computing}. Moreover, this result can be generalised in the following way. Let $k\in\{1,\dots,n-1\}$. If $\beta=\frac{1}{\theta}\in{]0,1[}$,
$\sigma_1=\dots=\sigma_k=\frac{1-\beta}{\beta}$ and $\sigma_{k+1}=\dots=\sigma_n=0$, then
$$ \zer\left(\sum_{i=1}^k A_i^{(\beta)}+\sum_{i=k+1}^nA_i(\cdot/\beta)\right) = \beta J_{\frac{1}{k(1-\beta)}\sum_{i=1}^nA_i}(0). $$

\item Consider weights $\omega_1,\dots,\omega_n>0$ with $\sum_{i=1}^n\omega_i=1$. If $\theta=1$ and $\sigma_1,\ldots,\sigma_n>0$ with $\sum_{i=1}^n\sigma_i=1$, then
 $$ J_{\sum_{i=1}^n\omega_iA_i}(q) = \zer\left( \sum_{i=1}^n\left((\omega_iA_i)_{-q}\right)^{(1,\sigma_i)}\right) + q = \zer\left( -q+\Id + \sum_{i=1}^n\omega_iA_i\right).$$
This fact is used in \cite[Theorem~2.8]{combettes2009iterative}.
\item Consider $n=2$, $\theta>0$, $\omega>0$, $\vartheta,\tau\in\mathbb{R}$ and $q,r,r_A,r_B\in\Hilbert$ satisfying
$$\vartheta+\tau=\frac{\theta}{\omega}\quad\text{and}\quad r_A+r_B=\frac{1}{\omega}(q+r).$$
Let $A:=A_1$ and $B:=A_2$. Take $\sigma_1=\sigma_2=\frac{\theta}{2\omega}$. Then
\begin{align*}
J_{\omega(A+B)}(r)&=\theta\zer\left(((A)_{-r})^{(\theta,\sigma_1)}+((B)_{-r})^{(\theta,\sigma_2)}\right)+r\\
&=\theta\zer\left(A\circ(\theta\Id+r)+B\circ(\theta\Id+r)+\frac{\theta}{\omega}\Id\right)+r\\
%&=\theta\left(\zer\left(A\circ(\theta\Id-q)+B\circ(\theta\Id-q)+\frac{\theta}{\omega}\Id\right)-\frac{1}{\theta}(q+r)\right)+r
&=\theta\zer\left(A\circ(\theta\Id-q)+\vartheta\Id+B\circ(\theta\Id-q)+\tau\Id-\frac{1}{\omega}(q+r)\right)-q\\
&=\theta\zer\left(A\circ(\theta\Id-q)+\vartheta\Id-r_A+B\circ(\theta\Id-q)+\tau\Id-r_B\right)-q,
\end{align*}
which recovers \cite[Proposition~3.1]{dao2019resolvent}.\qedhere
\end{enumerate}
\end{remark}

\begin{corollary}\label{cor:zero of strengthenings prox}
For each $i\in\{1,\ldots,n\}$, let $f_i:\Hilbert\to{]-\infty,+\infty]}$ be proper, lsc and  $\alpha_i$-convex, with $\alpha_i\in\R$. Let $\sigma_1,\ldots,\sigma_n\in\R$ such that $\sigma:=\sum_{i=1}^n\sigma_i> 0$. Let $q\in\Hilbert$ and $\theta>0$, and suppose that $\sum_{i=1}^n(\theta\alpha_i+\sigma_i)>0.$
Then $q\in\ran\left(\Id+\frac{\theta}{\sigma}\sum_{i=1}^n\partial f_i\right)$ if and only if
\begin{equation}\label{eq:rangecond_prox}
q-\prox_{\frac{\theta}{\sigma}\sum_{i=1}^n f_i}(q) \in \left(\frac{\theta}{\sigma}\sum_{i=1}^n \partial f_i\right) \left(\prox_{\frac{\theta}{\sigma}\sum_{i=1}^n f_i}(q)\right).
\end{equation}
In this case, $\frac{1}{\theta}\left(\prox_{\frac{\theta}{\sigma}\sum_{i=1}^n f_i}(q)-q\right)$ is the unique element of $\zer\left(\sum_{i=1}^n((\partial f_i)_{-q})^{(\theta,\sigma_i)}\right)$.
\end{corollary}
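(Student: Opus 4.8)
The plan is to deduce this directly from Proposition~\ref{prop:zero of strengthenings} applied with $A_i:=\partial f_i$, the only genuine work being to reconcile the resolvent $J_{\frac{\theta}{\sigma}\sum_{i=1}^n\partial f_i}$ appearing there with the proximity operator $\prox_{\frac{\theta}{\sigma}\sum_{i=1}^n f_i}$ appearing here. First I would record that, by Example~\ref{ex:proxproj}\eqref{ex:prox}, each $\partial f_i$ is maximally $\alpha_i$-monotone, so the standing hypotheses of Proposition~\ref{prop:zero of strengthenings} are met. Writing $\gamma:=\frac{\theta}{\sigma}$ and $f:=\sum_{i=1}^n f_i$, the function $f$ is proper, lsc and $\bar\alpha$-convex with $\bar\alpha:=\sum_{i=1}^n\alpha_i$, and a direct computation gives
\[
 1+\gamma\bar\alpha=\frac{\sum_{i=1}^n(\theta\alpha_i+\sigma_i)}{\sigma}>0 .
\]
Hence, by Example~\ref{ex:proxproj}\eqref{ex:prox} together with Fact~\ref{f:Jalpha}, the operator $\prox_{\gamma f}=J_{\gamma\partial f}$ is single-valued with full domain; in particular $p:=\prox_{\gamma f}(q)$ is well-defined independently of the range condition, so that \eqref{eq:rangecond_prox} is meaningful.

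For the equivalence, the reverse implication is immediate: \eqref{eq:rangecond_prox} gives $q-p\in\gamma\sum_{i=1}^n\partial f_i(p)$, and hence $q\in\ran(\Id+\gamma\sum_{i=1}^n\partial f_i)$. For the forward implication I would lean on the (always valid) one-sided sum rule $\sum_{i=1}^n\partial f_i\subseteq\partial f$. Indeed, if $q\in\ran(\Id+\gamma\sum_{i=1}^n\partial f_i)$, pick $x$ with $q\in x+\gamma\sum_{i=1}^n\partial f_i(x)$; then $\frac{1}{\gamma}(q-x)\in\sum_{i=1}^n\partial f_i(x)\subseteq\partial f(x)$, whence $x\in J_{\gamma\partial f}(q)=\{p\}$. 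Thus $x=p$, and rearranging $q\in p+\gamma\sum_{i=1}^n\partial f_i(p)$ yields exactly \eqref{eq:rangecond_prox}.

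Finally, for the ``in this case'' assertion, I would observe that the computation in the forward implication also shows $p\in J_{\gamma\sum_{i=1}^n\partial f_i}(q)$; since Proposition~\ref{prop:zero of strengthenings} guarantees this resolvent value is a singleton under the present hypotheses, it follows that $J_{\gamma\sum_{i=1}^n\partial f_i}(q)=\{\prox_{\gamma f}(q)\}$. Substituting this identity into the last assertion of Proposition~\ref{prop:zero of strengthenings} (recalling $\gamma=\frac{\theta}{\sigma}$) gives that $\frac{1}{\theta}(\prox_{\gamma f}(q)-q)$ is the unique element of $\zer(\sum_{i=1}^n((\partial f_i)_{-q})^{(\theta,\sigma_i)})$, as required.

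The main obstacle is conceptual rather than computational: the proximity operator is the resolvent of $\partial f=\partial(\sum_{i=1}^n f_i)$, whereas the strengthenings and the range condition are built from $\sum_{i=1}^n\partial f_i$, and these two operators need not agree in the absence of a qualification condition. The observation that dissolves the difficulty is that only the inclusion $\sum_{i=1}^n\partial f_i\subseteq\partial f$, combined with single-valuedness of $J_{\gamma\partial f}$, is needed to force any resolvent point of $\gamma\sum_{i=1}^n\partial f_i$ at $q$ to coincide with $\prox_{\gamma f}(q)$.
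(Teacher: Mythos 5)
Your proof is correct and takes essentially the same route as the paper's: both hinge on the one-sided inclusion $\sum_{i=1}^n\partial f_i\subseteq\partial\bigl(\sum_{i=1}^n f_i\bigr)$ combined with the single-valuedness of $J_{\frac{\theta}{\sigma}\partial\left(\sum_{i=1}^n f_i\right)}=\prox_{\frac{\theta}{\sigma}\sum_{i=1}^n f_i}$ (justified by the parameter check $1+\frac{\theta}{\sigma}\sum_{i=1}^n\alpha_i>0$ and Example~\ref{ex:proxproj}\eqref{ex:prox}), and both then obtain the final assertion from Proposition~\ref{prop:zero of strengthenings} via the $\alpha_i$-monotonicity of each $\partial f_i$. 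Your write-up is if anything slightly more explicit than the paper's, since you spell out the identification $J_{\frac{\theta}{\sigma}\sum_{i=1}^n\partial f_i}(q)=\bigl\{\prox_{\frac{\theta}{\sigma}\sum_{i=1}^n f_i}(q)\bigr\}$ before substituting into that proposition.
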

\begin{proof}
The reverse implication is immediate. To prove the forward implication, consider $x\in\Hilbert$ such that $q\in x+\frac{\theta}{\sigma}\sum_{i=1}^n\partial f_i(x)$. Combining this with the inclusion $\sum_{i=1}^n\partial f_i(x)\subseteq \partial(\sum_{i=1}^n f_i)(x)$ gives
$$
q\in \left(\Id+\frac{\theta}{\sigma}\partial\left(\sum_{i=1}^n f_i\right)\right)(x) \iff x=J_{\frac{\theta}{\sigma}\partial\left(\sum_{i=1}^n f_i\right)}(q).%=\prox_{\frac{\theta}{\sigma}\sum_{i=1}^n f_i}(q),
$$
Since $f_i$ is proper, lsc and $\alpha_i$-convex, then $\sum_{i=1}^nf_i$ is proper, lsc and $\alpha$-convex with $\alpha:=\sum_{i=1}^n\alpha_i$. By assumption on the parameters, it holds that $1+\alpha\frac{\theta}{\sigma}>0$. Then, by Example~\ref{ex:proxproj}\eqref{ex:prox}, we get that
$$J_{\frac{\theta}{\sigma}\partial\left(\sum_{i=1}^n f_i\right)}(q)=\prox_{\frac{\theta}{\sigma}\sum_{i=1}^n f_i}(q),$$
and, thus, \eqref{eq:rangecond_prox} holds. The last assertion follows from Proposition~\ref{prop:zero of strengthenings} combined with the fact that $\partial f_i$ is $\alpha_i$-monotone, for $i=1,\ldots,n$, by Example~\ref{ex:proxproj}\eqref{ex:prox}.
\end{proof}

\begin{corollary}\label{cor:zero of strengthenings normalcone}
Let $C_i\subseteq\Hilbert$ be a nonempty, closed and convex set and $\sigma_i\in\R$ for $i\in\{1,\ldots,n\}$ such that $\sigma:=\sum_{i=1}^n\sigma_i>0$. Let $q\in\Hilbert$ and let $\theta>0$. Then $q\in\ran\left(\Id+\sum_{i=1}^n N_{C_i}\right)$ if and only if
\begin{equation*}%\label{eq:rangecond_normalcone}
q-P_{\cap_{i=1}^n C_i}(q) \in \left(\sum_{i=1}^n N_{C_i}\right) \left(P_{\cap_{i=1}^n C_i}(q)\right).
\end{equation*}
In this case, $\frac{1}{\theta}\left(P_{\cap_{i=1}^n C_i}(q)-q\right)$ is the unique element of $\zer\left(\sum_{i=1}^n((N_{C_i})_{-q})^{(\theta,\sigma_i)}\right)$.
\end{corollary}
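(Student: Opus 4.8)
The plan is to recognise this statement as the specialisation of Corollary~\ref{cor:zero of strengthenings prox} to indicator functions. Accordingly, I would set $f_i:=\iota_{C_i}$ for each $i\in\{1,\dots,n\}$. Since each $C_i$ is nonempty, closed and convex, the function $\iota_{C_i}$ is proper, lsc and convex, i.e.\ $\alpha_i$-convex with $\alpha_i=0$. The remaining hypothesis of Corollary~\ref{cor:zero of strengthenings prox}, namely $\sum_{i=1}^n(\theta\alpha_i+\sigma_i)>0$, then reduces to $\sum_{i=1}^n\sigma_i=\sigma>0$, which is assumed. Thus Corollary~\ref{cor:zero of strengthenings prox} applies with these data and I only need to translate its conclusion back into the language of projectors and normal cones.

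For this translation I would use the standard dictionary between an indicator function and its associated operators: $\partial\iota_{C_i}=N_{C_i}$ (Example~\ref{ex:proxproj}), the sum rule $\sum_{i=1}^n\iota_{C_i}=\iota_{\cap_{i=1}^nC_i}$, and $\prox_{\gamma\iota_{\cap_{i=1}^nC_i}}=P_{\cap_{i=1}^nC_i}$ for every $\gamma>0$, the latter being independent of $\gamma$ since the minimiser of $c\mapsto\frac{1}{2\gamma}\|x-c\|^2$ over $\cap_{i=1}^nC_i$ does not depend on the parameter. In particular, the proximity term $\prox_{\frac{\theta}{\sigma}\sum_i f_i}(q)$ appearing in Corollary~\ref{cor:zero of strengthenings prox} becomes exactly $P_{\cap_{i=1}^nC_i}(q)$.

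The one point requiring care — and the only step beyond pure substitution — is the disappearance of the factor $\frac{\theta}{\sigma}$ decorating the operator sum in Corollary~\ref{cor:zero of strengthenings prox}. Here I would invoke the fact that each value $N_{C_i}(x)$ is a convex cone, so that $\lambda N_{C_i}=N_{C_i}$ for every $\lambda>0$; consequently $\frac{\theta}{\sigma}\sum_{i=1}^nN_{C_i}=\sum_{i=1}^nN_{C_i}$ as set-valued operators. This single identity simultaneously reduces the range condition $q\in\ran\left(\Id+\frac{\theta}{\sigma}\sum_iN_{C_i}\right)$ to $q\in\ran\left(\Id+\sum_iN_{C_i}\right)$ and converts the inclusion $q-P_{\cap_{i=1}^nC_i}(q)\in\left(\frac{\theta}{\sigma}\sum_iN_{C_i}\right)\left(P_{\cap_{i=1}^nC_i}(q)\right)$ into the stated one. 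The final assertion about the unique zero transfers directly from the corresponding assertion in Corollary~\ref{cor:zero of strengthenings prox}, since $\frac{1}{\theta}\left(\prox_{\frac{\theta}{\sigma}\sum_i f_i}(q)-q\right)=\frac{1}{\theta}\left(P_{\cap_{i=1}^nC_i}(q)-q\right)$ and $\partial f_i=N_{C_i}$. I do not anticipate a genuine obstacle: the proof is essentially a bookkeeping exercise, with the cone-homogeneity of the normal cone being the only nontrivial ingredient that explains why no parameter survives in the final formulation.
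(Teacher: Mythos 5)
Your proposal is correct and takes essentially the same route as the paper, whose entire proof is the one-line instruction to apply Corollary~\ref{cor:zero of strengthenings prox} with $f_i=\iota_{C_i}$. Your extra bookkeeping --- the identities $\partial\iota_{C_i}=N_{C_i}$, $\sum_{i=1}^n\iota_{C_i}=\iota_{\cap_{i=1}^nC_i}$, $\prox_{\gamma\iota_{\cap_{i=1}^nC_i}}=P_{\cap_{i=1}^nC_i}$, and the cone-homogeneity $\frac{\theta}{\sigma}N_{C_i}(x)=N_{C_i}(x)$ that removes the parameter from the final statement --- simply makes explicit what the paper leaves implicit.
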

\begin{proof}
Apply Corollary~\ref{cor:zero of strengthenings prox} with $f_i=\iota_{C_i}$, for $i\in\{1,\ldots,n\}$.
\end{proof}

\begin{remark}[Sum rule and strong CHIP]\label{r:sCHIP}
In the setting of Corollary~\ref{cor:zero of strengthenings prox}, a sufficient condition for \eqref{eq:rangecond_prox} is
\begin{equation}\label{eq:sum_rule}
\sum_{i=1}^n\partial f_i(x) = \partial\left(\sum_{i=1}^n f_i\right)(x),\quad\forall x\in\Hilbert,
\end{equation}%
which is referred to as the \emph{subdifferential sum rule}. In fact, using an argument analogous to \cite[Proposition~4.1]{aragon2018new}, it can be easily shown that the sum rule \eqref{eq:sum_rule} is equivalent to having condition \eqref{eq:rangecond_prox} to hold for all $q\in\Hilbert$. Sufficient conditions for \eqref{eq:sum_rule} can be expressed in terms of the domains of the functions (see, e.g.,~\cite[Corollary~16.50]{bauschke2017}). Specialising to the indicator functions to sets, that is, in the framework of Corollary~\ref{cor:zero of strengthenings normalcone}, the sum rule becomes
\begin{equation}\label{eq:strong_chip}
\sum_{i=1}^n N_{C_i}(x) = N_{\cap_{i=1}^nC_i}(x),\quad\forall x\in\Hilbert,
\end{equation}
which is known as the \emph{strong conical hull intersection (strong CHIP)} property. Specific sufficient conditions for \eqref{eq:strong_chip} can be found in \cite{burachik2005}.%[Theorem 3.1, Proposition 3.1]
\end{remark}

\section{Forward-backward-type methods}\label{s:fb methods}

In this section, we focus on the problem of computing
  \begin{equation}\label{eq:JABq}
   J_{\omega(A+B)}(q),
  \end{equation}
for some given $q\in\Hilbert$ and $\omega>0$, where $A\colon\Hilbert\setto\Hilbert$ is $\alpha_A$-maximally monotone and $B:\Hilbert\to\Hilbert$ is $\alpha_B$-monotone, single-valued and continuous.

In this situation, we can perform direct evaluations (forward steps) of $B$ and resolvent evaluations (backward steps) of $A$. For simplicity of exposition, the operator $B$ is assumed to have full domain which ensures maximality of $A+B$.

\begin{assumption}\label{a:1}
Let $\alpha_A,\alpha_B\in\mathbb{R}$ denote the monotonicity constants associated with the operators $A$ and $B$ in~\eqref{eq:JABq}, respectively. Suppose $\theta>0$ and $\sigma=(\sigma_A,\sigma_B)\in\mathbb{R}_{++}^2$ satisfy
$$\theta\alpha_A+\sigma_A>0\quad\text{and}\quad\theta\alpha_B+\sigma_B>0.$$
\end{assumption}

\begin{remark}\label{r:a1_v2}
For any $\alpha_A,\alpha_B\in\mathbb{R}$, there always exist $\theta,\sigma_A,\sigma_B\in\mathbb{R}_{++}$ satisfying Assumption~\ref{a:1}. Thus, Assumption~\ref{a:1} does not induce any restrictions on the operators $A$ and $B$ in \eqref{eq:JABq}, but it may restrict the values of $\omega$ for which the resolvent in \eqref{eq:JABq} can be computed if $\alpha_A$ or $\alpha_B$ is negative. When $A$ and $B$ are monotone (i.e., $\alpha=(\alpha_A,\alpha_B)\in\mathbb{R}^2_+$), Assumption~\ref{a:1} is trivially satisfied.

In some circumstances, it may  suffice to assume that $\theta>0$ and $(\sigma_A,\sigma_B)\in\R^2$ satisfy the weaker assumption
\begin{equation}\label{a1_rel}
\sigma_A+\sigma_B>0, \quad \theta\alpha_A+\sigma_A\geq 0\quad\text{and}\quad\theta\alpha_B+\sigma_B\geq0.
\end{equation}
In this case, Proposition~\ref{prop:strength_resolvent}\eqref{prop:strength_resolvent_I} yields maximal monotonicity of the strengthenings, whereas Proposition~\ref{prop:zero of strengthenings} is still applicable. Thus, \eqref{a1_rel} may be sufficient for the convergence of some algorithms (see, e.g., Remark~\ref{r:fbf}). However, we shall develop our analysis under Assumption~\ref{a:1} since it significantly improves our results and simplifies our presentation.
\end{remark}

In the following result we establish the maximality of the sum $A+B$ within the framework of this section. Thus, under Assumption~\ref{a:1}, condition $q\in\ran(\Id+\frac{\theta}{\sigma_A+\sigma_B}(A+B))$ in Proposition~\ref{prop:zero of strengthenings} holds for every point~$q\in\Hilbert$.

\begin{lemma}\label{l:maxAB}
Let $A:\Hilbert\setto\Hilbert$ be maximally $\alpha_A$-monotone and let $B:\Hilbert\to\Hilbert$ be $\alpha_B$-monotone and continuous. Then $A+B$ is maximally $(\alpha_A+\alpha_B)$-monotone. Consequently, if $\theta,\sigma_A,\sigma_B\in\R_{++}$ satisfy Assumption~\ref{a:1}, the resolvent $J_{\frac{\theta}{\sigma_A+\sigma_B}(A+B)}$ has full domain.
\end{lemma}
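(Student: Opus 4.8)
The plan is to separate the statement into the maximal-monotonicity claim for $A+B$ and its resolvent corollary, and to reduce the $\alpha$-monotone setting to the classical monotone one by translating with a multiple of $\Id$. The mere $(\alpha_A+\alpha_B)$-monotonicity of $A+B$ is immediate and I would dispose of it first: for any $(x,u),(y,v)\in\gra(A+B)$ write $u=a+B(x)$ and $v=b+B(y)$ with $a\in A(x)$, $b\in A(y)$, so that
\[
\langle x-y,u-v\rangle = \langle x-y,a-b\rangle + \langle x-y,B(x)-B(y)\rangle \ge (\alpha_A+\alpha_B)\|x-y\|^2,
\]
using the $\alpha_A$- and $\alpha_B$-monotonicity of $A$ and $B$ separately. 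The substantive content is maximality.

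For maximality, the key device is to pass to ordinary monotone operators by subtracting $\alpha\Id$. Applying Proposition~\ref{prop:strength_resolvent}\eqref{prop:strength_resolvent_I} with $\theta=1$, $q=0$ and $\sigma=-\alpha_A$ shows that $A-\alpha_A\Id$ is maximally ($0$-)monotone, and the same proposition with $\sigma=-\alpha_B$ shows that $B-\alpha_B\Id$ is monotone. Since $B$ is continuous with full domain, so is $B-\alpha_B\Id$, and an everywhere-defined monotone continuous operator is automatically maximally monotone (see, e.g.,~\cite[Corollary~20.28]{bauschke2017}); hence $B-\alpha_B\Id$ is maximally monotone with $\dom(B-\alpha_B\Id)=\Hilbert$. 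I would then invoke the Rockafellar sum theorem for maximally monotone operators, one of which has full domain (see, e.g.,~\cite[Corollary~25.5]{bauschke2017}), to conclude that
\[
(A-\alpha_A\Id)+(B-\alpha_B\Id)=(A+B)-(\alpha_A+\alpha_B)\Id
\]
is maximally monotone. A second application of Proposition~\ref{prop:strength_resolvent}\eqref{prop:strength_resolvent_I}, now read in the reverse direction with $\theta=1$, $q=0$ and $\sigma=-(\alpha_A+\alpha_B)$, upgrades this to maximal $(\alpha_A+\alpha_B)$-monotonicity of $A+B$.

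For the consequence, set $\gamma:=\frac{\theta}{\sigma_A+\sigma_B}>0$ and $\alpha:=\alpha_A+\alpha_B$. Summing the two inequalities of Assumption~\ref{a:1} gives $\theta(\alpha_A+\alpha_B)+(\sigma_A+\sigma_B)>0$, and dividing by $\sigma_A+\sigma_B>0$ this is precisely $1+\gamma\alpha>0$. Since $A+B$ is maximally $\alpha$-monotone, Fact~\ref{f:Jalpha}(ii) then yields $\dom J_{\gamma(A+B)}=\Hilbert$, which is the asserted full-domain property.

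The main obstacle is the maximality step. The $\alpha$-monotonicity is cheap, but showing that $\gra(A+B)$ admits no proper $\alpha$-monotone extension cannot be achieved by a direct graph argument and genuinely relies on the classical sum theorem. The only point requiring care is that $B$ is assumed \emph{merely continuous} (not Lipschitz or cocoercive), so I must use that everywhere-defined monotone continuous operators are automatically maximal rather than any stronger regularity; the $-\alpha\Id$ translation is exactly what lets both classical results apply verbatim after removing the strong/weak-monotonicity constants.
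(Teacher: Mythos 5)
Your proof is correct and takes essentially the same route as the paper's: both translate to $\widetilde A:=A-\alpha_A\Id$ and $\widetilde B:=B-\alpha_B\Id$, invoke \cite[Corollary~20.28]{bauschke2017} for the continuous monotone operator with full domain, apply the sum theorem \cite[Corollary~25.5]{bauschke2017}, and conclude the full-domain claim from Fact~\ref{f:Jalpha} using $1+\frac{\theta}{\sigma_A+\sigma_B}(\alpha_A+\alpha_B)>0$. The only cosmetic differences are that you justify the translation steps via Proposition~\ref{prop:strength_resolvent}\eqref{prop:strength_resolvent_I} and verify the plain $(\alpha_A+\alpha_B)$-monotonicity explicitly, both of which the paper treats as immediate.
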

\begin{proof}
Consider the operators $\widetilde A:=A-\alpha_A\Id$ and $\widetilde B:=B-\alpha_B\Id$. Then $\widetilde A$ is maximally monotone and $\widetilde B:\Hilbert\to\Hilbert$ is monotone and continuous. Thus, by \cite[Corollary~20.28]{bauschke2017} we get that $\widetilde B$ is maximally monotone and $\dom \widetilde B=\Hilbert$. It then follows from \cite[Corollary~25.5]{bauschke2017} that
$$\widetilde A+\widetilde B= A+B-(\alpha_A+\alpha_B)\Id,$$
is maximally monotone. Hence, $A+B$ is maximally $(\alpha_A+\alpha_B)$-monotone. The remaining assertion follows from Proposition~\ref{prop:Jalpha}, since
$$1+\frac{\theta}{\sigma_A+\sigma_B}(\alpha_A+\alpha_B)>0,$$
by Assumption~\ref{a:1}.
\end{proof}

\hspace{-1.2mm}Recall that a sequence $(x_k)\subseteq\Hilbert$ converges \emph{linearly} to $x\in\Hilbert$ if $\limsup_{k\to\infty}\frac{\|x_{k+1}-x\|}{\|x_k-x\|}<1$.

\begin{theorem}[Forward-backward method]\label{th:fb}
Let $A\colon\Hilbert\setto\Hilbert$ be maximally $\alpha_A$-monotone and let $\gamma,\theta,\sigma_A,\sigma_B\in\R_{++}$. Suppose one of the following holds:
\begin{enumerate}[(i)]
\item\label{it:fb i} $B\colon\Hilbert\to\Hilbert$ is $\alpha_B$-monotone and $\kappa$-Lipschitz and $\gamma\in{\bigl]0,\frac{2(\theta\alpha_B+\sigma_B)}{(\theta\kappa+\sigma_B)^2}\bigr[}$.
\item\label{it:fb ii} $B\colon\Hilbert\to\Hilbert$ is $\beta$-cocoercive for $\beta>0$ and $\gamma\in{\bigl]0,\frac{2\beta}{\theta+\beta\sigma_B}\bigr[}$. In this case, set $\alpha_B:=0$.
\end{enumerate}
Suppose that Assumption~\ref{a:1} holds. For any initial point $x_0\in\Hilbert$, consider the sequence $(x_k)$ given by
\begin{equation}\label{eq:fb}
 x_{k+1} = J_{\frac{\gamma\theta}{1+\gamma\sigma_A}A}\left(\frac{1}{1+\gamma\sigma_A}\left((1-\gamma \sigma_B)x_k-\gamma\theta B(x_k)+\gamma(\sigma_A+\sigma_B)q\right) \right),\quad\forall k\in\mathbb{N}.
\end{equation}
Then $(x_k)$ converges linearly to $J_{\frac{\theta}{\sigma_A+\sigma_B}(A+B)}(q)$.
\end{theorem}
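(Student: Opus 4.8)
The plan is to reduce the iteration to a plain forward--backward recursion for the \emph{strengthened} operators and then invoke a contraction estimate. Write $\sigma:=\sigma_A+\sigma_B$ and set
$$\bar A:=(A_{-q})^{(\theta,\sigma_A)}=A\circ(\theta\Id+q)+\sigma_A\Id,\qquad \bar B:=(B_{-q})^{(\theta,\sigma_B)}=B\circ(\theta\Id+q)+\sigma_B\Id.$$
By Lemma~\ref{l:maxAB}, $A+B$ is maximally $(\alpha_A+\alpha_B)$-monotone and $J_{\frac{\theta}{\sigma}(A+B)}$ has full domain, so $q\in\ran(\Id+\frac{\theta}{\sigma}(A+B))$. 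Proposition~\ref{prop:zero of strengthenings} then shows that $p:=J_{\frac{\theta}{\sigma}(A+B)}(q)$ satisfies $p=\theta\bar y+q$, where $\bar y$ is the \emph{unique} zero of $\bar A+\bar B$. Thus it suffices to construct a sequence converging $R$-linearly to $\bar y$ and then undo the affine change of variables $y\mapsto\theta y+q$.

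Next I would identify the iteration. Put $y_k:=\tfrac1\theta(x_k-q)$. Since $\bar B(y_k)=B(x_k)+\sigma_B y_k$, the forward step is $y_k-\gamma\bar B(y_k)=\tfrac{1-\gamma\sigma_B}{\theta}(x_k-q)-\gamma B(x_k)$; applying to it the closed form of $J_{\gamma\bar A}$ from Proposition~\ref{prop:strength_resolvent}\eqref{prop:strength_resolvent_II} (with $A$ and $\sigma_A$), then multiplying by $\theta$ and adding $q$, returns exactly \eqref{eq:fb}. Hence \eqref{eq:fb} is equivalent to the forward--backward iteration $y_{k+1}=J_{\gamma\bar A}\bigl(y_k-\gamma\bar B(y_k)\bigr)$ for $\zer(\bar A+\bar B)$. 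A short computation confirms that $\bar y$ is the unique fixed point of $T:=J_{\gamma\bar A}\circ(\Id-\gamma\bar B)$, since $0\in(\bar A+\bar B)(\bar y)$ is equivalent to $\bar y=J_{\gamma\bar A}(\bar y-\gamma\bar B(\bar y))$.

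I then collect the operator properties. By Proposition~\ref{prop:strength_resolvent}\eqref{prop:strength_resolvent_I}, $\bar A$ is maximally $\mu_A$-monotone with $\mu_A:=\theta\alpha_A+\sigma_A>0$; consequently $J_{\gamma\bar A}$ is single-valued with full domain (Fact~\ref{f:Jalpha}), and the strong-monotonicity estimate for resolvents gives that it is $\tfrac1{1+\gamma\mu_A}$-Lipschitz with $\tfrac1{1+\gamma\mu_A}<1$. For $\bar B$, in case~\eqref{it:fb i} Proposition~\ref{prop:strength_resolvent}\eqref{prop:strength_resolvent_I} and Proposition~\ref{prop:lip coco}\eqref{it:Lip of streng} give that $\bar B$ is $\mu_B$-monotone with $\mu_B:=\theta\alpha_B+\sigma_B>0$ and $L$-Lipschitz with $L:=\theta\kappa+\sigma_B$; in case~\eqref{it:fb ii}, Proposition~\ref{prop:lip coco}\eqref{it:coco of streng} gives that $\bar B$ is $\mu$-cocoercive with $\mu:=(\theta/\beta+\sigma_B)^{-1}=\frac{\beta}{\theta+\beta\sigma_B}$.

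Finally I would show $T$ is a Banach contraction and conclude. The key computation is for the forward step $\Id-\gamma\bar B$. In case~\eqref{it:fb i}, expanding the square and using $\mu_B$-monotonicity together with $L$-Lipschitzness yields $\|(\Id-\gamma\bar B)u-(\Id-\gamma\bar B)v\|^2\le(1-2\gamma\mu_B+\gamma^2L^2)\|u-v\|^2$, and the factor is $<1$ precisely when $\gamma<2\mu_B/L^2=\frac{2(\theta\alpha_B+\sigma_B)}{(\theta\kappa+\sigma_B)^2}$. In case~\eqref{it:fb ii}, cocoercivity gives $\|(\Id-\gamma\bar B)u-(\Id-\gamma\bar B)v\|^2\le\|u-v\|^2-\gamma(2\mu-\gamma)\|\bar B u-\bar B v\|^2\le\|u-v\|^2$ whenever $\gamma\le2\mu=\frac{2\beta}{\theta+\beta\sigma_B}$, so the forward step is nonexpansive. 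Composing with the $\tfrac1{1+\gamma\mu_A}$-Lipschitz resolvent, $T$ is a contraction in both cases (the contraction arising from $J_{\gamma\bar A}$ in case~\eqref{it:fb ii} and from both factors in case~\eqref{it:fb i}). The Banach fixed-point theorem then gives $\|y_k-\bar y\|\le c^k\|y_0-\bar y\|$ for some $c\in{]0,1[}$, whence $x_k=\theta y_k+q\to\theta\bar y+q=p$ at the same linear rate. I expect the main obstacle to be bookkeeping rather than depth: verifying the algebraic equivalence of \eqref{eq:fb} with the plain forward--backward recursion through the change of variables, and keeping the constants $\mu_A,\mu_B,L,\mu$ straight across the two cases so that the stated step-size ranges match exactly.
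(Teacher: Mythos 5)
Your proposal is correct, and its skeleton coincides with the paper's: both reduce \eqref{eq:fb}, via the change of variables $y_k=\tfrac{1}{\theta}(x_k-q)$ and Proposition~\ref{prop:strength_resolvent}\eqref{prop:strength_resolvent_II}, to the plain forward--backward iteration for the strengthened operators $\bar A:=(A_{-q})^{(\theta,\sigma_A)}$ and $\bar B:=(B_{-q})^{(\theta,\sigma_B)}$, and both identify the limit through Proposition~\ref{prop:zero of strengthenings}, with the range condition $q\in\ran\bigl(\Id+\frac{\theta}{\sigma_A+\sigma_B}(A+B)\bigr)$ supplied by Lemma~\ref{l:maxAB}. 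Where you genuinely differ is the convergence engine: the paper disposes of both cases \eqref{it:fb i} and \eqref{it:fb ii} by citing \cite[Proposition~26.16]{bauschke2017} for the strongly monotone forward--backward scheme, whereas you prove the contraction property of $T=J_{\gamma\bar A}\circ(\Id-\gamma\bar B)$ from scratch: the resolvent factor is $\frac{1}{1+\gamma\mu_A}$-Lipschitz by strong monotonicity of $\bar A$, the forward factor has Lipschitz constant $\sqrt{1-2\gamma\mu_B+\gamma^2L^2}<1$ in case \eqref{it:fb i} (the radicand is nonnegative since $\mu_B\leq L$) and is nonexpansive in case \eqref{it:fb ii} when $\gamma\leq 2\mu$, and then Banach's fixed-point theorem applies, with the fixed point of $T$ identified with the unique zero of $\bar A+\bar B$ by the standard equivalence. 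Your route is self-contained, produces explicit rate constants, and actually yields Q-linear convergence of $(x_k)$, marginally stronger than the R-linear conclusion stated; the paper's route is shorter and delegates precisely this computation (including the implicit translation of case \eqref{it:fb i} into $\frac{\mu_B}{L^2}$-cocoercivity of $\bar B$) to the cited result. The step-size ranges you derive match the stated ones exactly, so there is no gap in either argument.
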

\begin{proof}
According to Proposition~\ref{prop:strength_resolvent}, $(A_{-q})^{(\theta,\sigma_A)}$ is maximally $(\theta\alpha_A+\sigma_A)$-strongly monotone and $ J_{\gamma A_{-q}^{(\theta,\sigma_A)}}$ is single-valued with full domain. For the operator $(B_{-q})^{(\theta,\sigma_B)}$, we distinguish two cases which correspond to \eqref{it:fb i} and \eqref{it:fb ii}, respectively.
\begin{enumerate}[(i)]
\item By Proposition~\ref{prop:strength_resolvent}\eqref{prop:strength_resolvent_I}, $(B_{-q})^{(\theta,\sigma_B)}$ is $(\theta\alpha_B+\sigma_B)$-strongly monotone. Since $B$ is $\kappa$-Lipschitz, Theorem~\ref{th:lip coco}\eqref{it:Lip of streng} implies that $(B_{-q})^{(\theta,\sigma_B)}$ is $(\kappa\theta+\sigma_B)$-Lipschitz. In this setting, we assume that $\gamma < \frac{2(\theta\alpha_B+\sigma_B)}{(\kappa\theta+\sigma_B)^2}$.

\item Since $B$ is $\beta$-cocoercive, it is monotone; i.e., $\alpha_B$-monotone for $\alpha_B=0$. Moreover, since $\sigma_B>0$, Theorem~\ref{th:lip coco}\eqref{it:coco of streng} shows that $(B_{-q})^{(\theta,\sigma_B)}$ is $\mu$-cocoercive for $\mu:=(\theta/\beta+\sigma_B)^{-1}$. In this setting, we assume that $\gamma<2\mu=\frac{2\beta}{\theta+\beta\sigma_B}$.
\end{enumerate}
Now, let $y_0:=\frac{1}{\theta}(x_0-q)$ and let $(y_k)$ be the sequence generated according to
\begin{equation}\label{eq:fb strengthening}
 y_{k+1} = J_{\gamma (A_{-q})^{(\theta,\sigma_A)}}\left(y_k-\gamma (B_{-q})^{(\theta,\sigma_B)}(y_k)\right),\quad\forall k\in\mathbb{N}.
\end{equation}
In both of the above settings, \cite[Proposition~26.16]{bauschke2017} shows that $(y_k)$ converges linearly to the unique element in $\zer\left((A_{-q})^{(\theta,\sigma_A)}+(B_{-q})^{(\theta,\sigma_B)}\right)$ which, by Proposition~\ref{prop:zero of strengthenings}, is equal to \break $\frac{1}{\theta}(J_{\frac{\theta}{\sigma_A+\sigma_B}(A+B)}(q)-q)$. By setting $x_k:=\theta y_k+q$ for all $k\in\mathbb{N}$ and using Proposition~\ref{prop:strength_resolvent}\eqref{prop:strength_resolvent_II}, one obtains \eqref{eq:fb} from \eqref{eq:fb strengthening}. It follows that $(x_k)$ is given by \eqref{eq:fb} and converges linearly to $J_{\frac{\theta}{\sigma_A+\sigma_B}(A+B)}(q)$.
\end{proof}

\begin{remark}
Consider the setting of Theorem~\ref{th:fb}\eqref{it:fb i}. $B^{(\theta,-\theta\alpha_B)}=B^{(\theta,\sigma_B)}-(\theta\alpha_B+\sigma_B)\Id$ is $\theta(\kappa+|\alpha_B|)$-Lipschitz continuous by Theorem~\ref{th:lip coco}\eqref{it:Lip of streng}. In this setting, Chen and Rockafellar \cite[Theorem~2.4]{chen1997convergence} proved that $(x_k)$ converges linearly when $\gamma>0$ satisfies
$$ \gamma^{-1} > \frac{\theta(\alpha_B-\alpha_A)+\sigma_B-\sigma_A}{2}+\frac{\theta(\kappa+|\alpha_B|)}{2}\max\left\{1,\frac{\theta(\kappa+|\alpha_B|)}{\theta(\alpha_B+\alpha_A)+\sigma_B+\sigma_A}\right\}.\qedhere$$
\end{remark}

\begin{theorem}[Forward-backward-forward method]\label{th:fbf}
Let $A\colon\Hilbert\setto\Hilbert$ be maximally $\alpha_A$-monotone, and let $B\colon\Hilbert\to\Hilbert$ be $\alpha_B$-monotone and $\kappa$-Lipschitz. Let $\theta,\sigma_A,\sigma_B\in\R_{++}$ be such that Assumption~\ref{a:1} holds and let $\gamma\in{]0,\frac{1}{\theta\kappa+\sigma_B}[}$. For any initial point $x_0\in\Hilbert$, consider the sequence $(x_k)$ given by
\begin{equation}\label{eq:fbf}
\left\{\begin{aligned}
     y_k &= J_{\frac{\gamma\theta}{1+\gamma\sigma_A}A}\left(\frac{1}{1+\gamma\sigma_A}\left((1-\gamma\sigma_B)x_k-\gamma\theta B(x_k)+\gamma(\sigma_A+\sigma_B)q\right)\right) \\
     x_{k+1} &= (1-\gamma\sigma_B)y_k +\gamma\sigma_Bx_k -\gamma\theta B(y_k) + \gamma \theta B(x_k).
\end{aligned}\right.
\end{equation}
Then $(x_k)$ converges linearly to $J_{\frac{\theta}{\sigma_A+\sigma_B}(A+B)}(q)$.
\end{theorem}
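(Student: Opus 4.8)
The plan is to mirror the strategy used in the proof of Theorem~\ref{th:fb}: transfer the computation to the strengthened operators $\widehat A:=(A_{-q})^{(\theta,\sigma_A)}$ and $\widehat B:=(B_{-q})^{(\theta,\sigma_B)}$, invoke a known convergence result for Tseng's forward-backward-forward method applied to the inclusion $0\in(\widehat A+\widehat B)(\cdot)$, and then translate the resulting iteration back to the original variables via the change of variables $x_k=\theta\widetilde x_k+q$. First I would record the relevant properties of the strengthenings. By Proposition~\ref{prop:strength_resolvent}\eqref{prop:strength_resolvent_I}, Assumption~\ref{a:1} guarantees that $\widehat A$ is maximally $(\theta\alpha_A+\sigma_A)$-strongly monotone (so, by Fact~\ref{f:Jalpha}, $J_{\gamma\widehat A}$ is single-valued with full domain) and that $\widehat B$ is $(\theta\alpha_B+\sigma_B)$-monotone; by Proposition~\ref{prop:lip coco}\eqref{it:Lip of streng}, $\widehat B$ is $(\theta\kappa+\sigma_B)$-Lipschitz continuous. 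Crucially, the stated bound $\gamma\in{]0,\frac{1}{\theta\kappa+\sigma_B}[}$ is exactly the condition $\gamma<1/L$ with $L:=\theta\kappa+\sigma_B$ the Lipschitz constant of $\widehat B$, which is the step-size restriction under which the forward-backward-forward method converges. Maximality of $A+B$ from Lemma~\ref{l:maxAB} ensures $J_{\frac{\theta}{\sigma_A+\sigma_B}(A+B)}$ is defined at $q$.

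Next I would set $\widetilde x_0:=\tfrac1\theta(x_0-q)$ and consider the forward-backward-forward iteration generated on the strengthened operators,
\begin{equation*}
\left\{\begin{aligned}
\widetilde y_k &= J_{\gamma\widehat A}\bigl(\widetilde x_k-\gamma\widehat B(\widetilde x_k)\bigr),\\
\widetilde x_{k+1} &= \widetilde y_k-\gamma\widehat B(\widetilde y_k)+\gamma\widehat B(\widetilde x_k).
\end{aligned}\right.
\end{equation*}
Since $\widehat A$ is strongly monotone, $\widehat B$ is monotone and Lipschitz, and $\gamma<1/L$, the convergence theorem for Tseng's method applies; the strong monotonicity of $\widehat A$ upgrades the conclusion from weak convergence to $R$-linear convergence of $(\widetilde x_k)$ to the unique element of $\zer(\widehat A+\widehat B)$. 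By Proposition~\ref{prop:zero of strengthenings}, this zero equals $\frac1\theta\bigl(J_{\frac{\theta}{\sigma_A+\sigma_B}(A+B)}(q)-q\bigr)$.

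The remaining, and most delicate, step is the bookkeeping of the change of variables. Writing $x_k:=\theta\widetilde x_k+q$ and $y_k:=\theta\widetilde y_k+q$, I would substitute $\widehat B(\widetilde x_k)=B(x_k)+\sigma_B\widetilde x_k$ and $\widehat B(\widetilde y_k)=B(y_k)+\sigma_B\widetilde y_k$ into the two lines above. For the first line, applying Proposition~\ref{prop:strength_resolvent}\eqref{prop:strength_resolvent_II} to rewrite $J_{\gamma\widehat A}$ in terms of $J_{\frac{\gamma\theta}{1+\gamma\sigma_A}A}$ and collecting the $q$-terms (using $1-\frac{1-\gamma\sigma_B}{1+\gamma\sigma_A}=\frac{\gamma(\sigma_A+\sigma_B)}{1+\gamma\sigma_A}$) should recover precisely the update for $y_k$ in~\eqref{eq:fbf}; for the second line, multiplying through by $\theta$ and noting that the $q$-terms combine to $-q$ (since $(1-\gamma\sigma_B)+\gamma\sigma_B=1$) should recover the update for $x_{k+1}$. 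Hence $(x_k)$ coincides with the sequence in~\eqref{eq:fbf} and inherits the $R$-linear convergence to $J_{\frac{\theta}{\sigma_A+\sigma_B}(A+B)}(q)$. I expect the main obstacle to be ensuring that the invoked convergence result genuinely delivers the $R$-linear rate rather than mere weak convergence of the Tseng iterates; this is exactly where the strong monotonicity of $\widehat A$ guaranteed by Assumption~\ref{a:1} must be exploited, in complete analogy with the role played by strong monotonicity in Theorem~\ref{th:fb}.
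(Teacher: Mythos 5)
Your proposal is correct and follows essentially the same route as the paper's proof: pass to the strengthened operators $(A_{-q})^{(\theta,\sigma_A)}$ and $(B_{-q})^{(\theta,\sigma_B)}$, apply Tseng's forward-backward-forward convergence theorem (the paper invokes \cite[Theorem~3.4(c)]{tseng2000modified}, whose linear-rate conclusion is exactly the upgrade you attribute to the strong monotonicity supplied by Assumption~\ref{a:1}), identify the unique zero via Proposition~\ref{prop:zero of strengthenings}, and translate back with Proposition~\ref{prop:strength_resolvent}\eqref{prop:strength_resolvent_II} and the change of variables $x_k=\theta\widetilde{x}_k+q$. Your bookkeeping of the $q$-terms in both lines of the iteration is also accurate, so there is no gap.
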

\begin{proof}
The proof is similar to Theorem~\ref{th:fb}\eqref{it:fb i} but uses \cite[Theorem~3.4(c)]{tseng2000modified} in place of \cite[Proposition~26.16]{bauschke2017}. Indeed, let $u_0:=\frac{1}{\theta}(x_0-q)$ and consider the sequence $(u_k)$ given by
\begin{equation}\label{eq:fbf_uk}
 \left\{\begin{aligned}
     v_{k} &= J_{\gamma (A_{-q})^{(\theta,\sigma_A)}}\left(u_k-\gamma (B_{-q})^{(\theta,\sigma_B)}(u_k)\right) \\
     u_{k+1} &= v_k -\gamma (B_{-q})^{(\theta,\sigma_B)}(v_k) + \gamma (B_{-q})^{(\theta,\sigma_B)}(u_k). \end{aligned}\right.
\end{equation}
Since $(B_{-q})^{(\theta,\sigma_B)}$ is $(\theta\kappa+\sigma_B)$-Lipschitz, applying \cite[Theorem~3.4(c)]{tseng2000modified} (or \cite[Theorem~26.17]{bauschke2017}) shows that the sequence $(u_k)$ converges linearly to the unique element of $\zer\left((A_{-q})^{(\theta,\sigma_A)}+(B_{-q})^{(\theta,\sigma_B)}\right)$ which is equal to $\frac{1}{\theta}(J_{\frac{\theta}{\sigma_A+\sigma_B}(A+B)}(q)-q)$.
By Proposition~\ref{prop:strength_resolvent}\eqref{prop:strength_resolvent_II}, we deduce that
  $$ \left\{\begin{aligned}
     \theta v_{k} +q&= J_{\frac{\gamma\theta}{1+\gamma\sigma_A}A}\left(\frac{1}{1+\gamma\sigma_A}\left((1-\gamma\sigma_B)(\theta u_k+q)-\gamma\theta B(\theta u_k+q)+\gamma(\sigma_A+\sigma_B)q\right)\right) \\
     \theta u_{k+1} +q&= (1-\gamma\sigma_B)(\theta v_k+q)+\gamma\sigma_B(\theta u_k+q) -\gamma\theta B(\theta v_k+q) + \gamma\theta B(\theta u_k+q). \end{aligned}\right. $$
By setting $(x_k,y_k):=(\theta u_k+q,\theta v_k+q)$, we obtain \eqref{eq:fbf} and deduce that $(x_k)$ converges linearly to $J_{\frac{\theta}{\sigma_A+\sigma_B}(A+B)}(q)$.
\end{proof}

\begin{remark}\label{r:fbf}
Weak converge of the algorithm in Theorem~\ref{th:fbf} can still be obtained if we assume the restrictions on the parameters in \eqref{a1_rel} instead of Assumption~\ref{a:1}. Indeed, as stated in Remark~\ref{r:a1_v2},  $(A_{-q})^{(\theta,\sigma_A)}$ is maximally monotone, while $(B_{-q})^{(\theta,\sigma_B)}$ remains $(\theta\kappa+\sigma_B)$-Lipschitz. Therefore, by applying \cite[Theorem~26.17(ii)]{bauschke2017}, we obtain that the sequence $(u_k)$ generated by \eqref{eq:fbf_uk} converges weakly to a zero of $(A_{-q})^{(\theta,\sigma_A)}+(B_{-q})^{(\theta,\sigma_B)}$, provided that this set of zeros is nonempty, or equivalently, if $q\in\ran(\Id+\frac{\theta}{\sigma_A+\sigma_B}(A+B))$. Observe that the latter automatically holds under Assumption~\ref{a:1}, in view of Lemma~\ref{l:maxAB}.
\end{remark}

The following result concerns a strengthened version of the adaptive \emph{\textbf{G}olden \textbf{Ra}tio \textbf{Al}gorithm (GRAAL)} for variational inequalities.
\begin{theorem}[Adaptive GRAAL]
Suppose $\dim\Hilbert<+\infty$. Let $g\colon\Hilbert\to {]-\infty,+\infty]}$ be proper, lsc and $\alpha_A$-convex, and let $B\colon\Hilbert\to\Hilbert$ be $\alpha_B$-monotone and locally Lipschitz continuous. Let $\theta,\sigma_A,\sigma_B\in\R_{++}$ be such that Assumption~\ref{a:1} holds. Choose $x_0,x_1\in\Hilbert$, $\gamma_0,\overline{\gamma}\in\R_{++}$, $\phi\in\bigl]1,\frac{1+\sqrt{5}}{2}\bigr]$. Set $\bar{x}_0=x_1$, $\gamma_{-1}=\phi\gamma_0$ and $\rho=\frac{1}{\phi}+\frac{1}{\phi^2}$. For $k\geq 1$, consider
\begin{equation}\label{eq:graal1}
\left\{\begin{aligned}
\gamma_k &= \min\left\{\rho\gamma_{k-1},\frac{\phi^2}{4\gamma_{k-2}}\frac{\|x_k-x_{k-1}\|^2}{\|\theta\bigl(B(x_k)-B(x_{k-1})\bigr)+\sigma_B(x_k-x_{k-1})\|^2},\overline{\gamma}\right\} \\
\bar{x}_k &= \frac{(\phi-1)x_k+\bar{x}_{k-1}}{\phi} \\
x_{k+1}   &=\prox_{\frac{\gamma_k\theta}{1+\gamma_k\sigma_A}g}\left(\frac{1}{1+\gamma_k\sigma_A}\left(\bar{x}_k-\gamma_k\sigma_B x_k-\gamma_k \theta B(x_k)+\gamma_k(\sigma_A+\sigma_B)q\right)\right).
\end{aligned}\right.
\end{equation}
Then $(x_k)$ and $(\bar{x}_k)$ converge to $J_{\frac{\theta}{\sigma_A+\sigma_B}(\partial g+B)}(q)$.
\end{theorem}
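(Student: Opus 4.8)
The plan is to follow the template of Theorems~\ref{th:fb} and~\ref{th:fbf}: transfer the problem to the strengthened operators, run the known adaptive golden ratio algorithm (GRAAL) of Malitsky in that lifted space, appeal to its convergence theorem, and transport the resulting recursion back to the original variables via Proposition~\ref{prop:strength_resolvent}\eqref{prop:strength_resolvent_II}. To prepare, I would record the properties of the two relevant strengthenings. By Remark~\ref{r:gbar}, the function $\tilde{g}:=\frac{1}{\theta}g\circ(\theta\Id+q)+\frac{\sigma_A}{2}\|\cdot\|^2$ is proper, lsc and $(\theta\alpha_A+\sigma_A)$-convex---hence strongly convex under Assumption~\ref{a:1}---and satisfies $\partial\tilde{g}=((\partial g)_{-q})^{(\theta,\sigma_A)}$ with $\prox_{\gamma\tilde{g}}=J_{\gamma\partial\tilde{g}}$. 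Setting $F:=(B_{-q})^{(\theta,\sigma_B)}$, Proposition~\ref{prop:strength_resolvent}\eqref{prop:strength_resolvent_I} shows that $F$ is $(\theta\alpha_B+\sigma_B)$-monotone (in particular monotone), while Proposition~\ref{prop:lip coco}\eqref{it:local Lip of streng} shows that $F$ is locally Lipschitz with full domain. By Lemma~\ref{l:maxAB} (applied with $A=\partial g$ and the continuous operator $B$), the resolvent $J_{\frac{\theta}{\sigma_A+\sigma_B}(\partial g+B)}$ has full domain, so $q\in\ran\bigl(\Id+\frac{\theta}{\sigma_A+\sigma_B}(\partial g+B)\bigr)$; since $(\theta\alpha_A+\sigma_A)+(\theta\alpha_B+\sigma_B)>0$, Proposition~\ref{prop:zero of strengthenings} then guarantees that $\partial\tilde{g}+F$ has a unique zero, namely $z^\star:=\frac{1}{\theta}\bigl(J_{\frac{\theta}{\sigma_A+\sigma_B}(\partial g+B)}(q)-q\bigr)$.

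Next I would introduce the lifted iterates $u_k:=\frac{1}{\theta}(x_k-q)$ and $\bar{u}_k:=\frac{1}{\theta}(\bar{x}_k-q)$, and verify that \eqref{eq:graal1} is precisely the image under the affine map $x=\theta u+q$ of the adaptive GRAAL recursion
\begin{equation*}
\bar{u}_k=\frac{(\phi-1)u_k+\bar{u}_{k-1}}{\phi},\qquad u_{k+1}=\prox_{\gamma_k\tilde{g}}\bigl(\bar{u}_k-\gamma_k F(u_k)\bigr),
\end{equation*}
driven by the same scalar step-size sequence $(\gamma_k)$. The translation of the prox-step uses Proposition~\ref{prop:strength_resolvent}\eqref{prop:strength_resolvent_II} with $A=\partial g$, $\sigma=\sigma_A$ and parameter $\gamma_k$, which converts $\prox_{\gamma_k\tilde{g}}=J_{\gamma_k((\partial g)_{-q})^{(\theta,\sigma_A)}}$ into an evaluation of $\prox_{\frac{\gamma_k\theta}{1+\gamma_k\sigma_A}g}$; substituting $\theta u_k+q=x_k$, $\theta\bar{u}_k+q=\bar{x}_k$ and $F(u_k)=B(x_k)+\sigma_B u_k$ reproduces the third line of \eqref{eq:graal1} after collecting terms, while the averaging step reduces to the second line. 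Crucially, since $u_k-u_{k-1}=\frac{1}{\theta}(x_k-x_{k-1})$ and $F(u_k)-F(u_{k-1})=\frac{1}{\theta}\bigl(\theta(B(x_k)-B(x_{k-1}))+\sigma_B(x_k-x_{k-1})\bigr)$, both the numerator and the denominator of the ratio in the step-size rule scale by the same factor $\theta$, so the adaptive rule for $(\gamma_k)$ is identical in the two sets of variables. With the iteration identified, I would invoke Malitsky's convergence theorem for the adaptive GRAAL, which (in finite dimensions) guarantees that $(u_k)$ and $(\bar{u}_k)$ converge to a zero of $\partial\tilde{g}+F$; by the uniqueness established above this limit is $z^\star$, and transporting back through $x_k=\theta u_k+q$ and $\bar{x}_k=\theta\bar{u}_k+q$ yields convergence of $(x_k)$ and $(\bar{x}_k)$ to $J_{\frac{\theta}{\sigma_A+\sigma_B}(\partial g+B)}(q)$.

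The main obstacle is twofold. First, one must check that the cited adaptive GRAAL convergence theorem applies verbatim to the lifted data: the hypotheses required there---monotonicity and local Lipschitz continuity of $F$ with full domain, together with nonemptiness of the solution set---are exactly what Propositions~\ref{prop:strength_resolvent}, \ref{prop:lip coco} and~\ref{prop:zero of strengthenings} furnish, but this is also the reason the hypothesis $\dim\Hilbert<+\infty$ is indispensable (and is absent from Theorems~\ref{th:fb} and~\ref{th:fbf}): the convergence analysis of the adaptive step-size with merely \emph{locally} Lipschitz $F$ relies on the iterates remaining in a compact set, on which $F$ is then uniformly Lipschitz, and on extracting convergent subsequences---neither of which is available without finite dimensionality. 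Second, the remaining work is the careful but routine bookkeeping of the affine change of variables, ensuring in particular that the adaptive step-size rule is genuinely preserved, which hinges on the common scaling factor $\theta$ noted above.
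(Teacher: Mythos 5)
Your proposal is correct and follows essentially the same route as the paper's proof: the same strengthened function $\bar{g}=\frac{1}{\theta}g\circ(\theta\Id+q)+\frac{\sigma_A}{2}\|\cdot\|^2$ and operator $F=(B_{-q})^{(\theta,\sigma_B)}$, the same appeal to Malitsky's adaptive GRAAL convergence theorem, and the same affine change of variables back to \eqref{eq:graal1} via Proposition~\ref{prop:strength_resolvent}\eqref{prop:strength_resolvent_II}, with the zero identified through Proposition~\ref{prop:zero of strengthenings} and Lemma~\ref{l:maxAB}. The only cosmetic difference is that the paper states the target problem as the variational inequality $\langle F(z),z-z^*\rangle+\bar{g}(z)-\bar{g}(z^*)\geq 0$ (the form in which Malitsky's theorem is phrased) rather than directly as $z^*\in\zer(\partial\bar{g}+F)$, and your explicit verification that the adaptive step-size ratio is invariant under the scaling by $\theta$ is a useful detail the paper leaves implicit.
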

\begin{proof}
The proof is similar to Theorem~\ref{th:fb}\eqref{it:fb i} with \cite[Theorem~2]{malitsky2019golden} applied. To this end, set $\bar{g}(z):=\frac{1}{\theta}g(\theta z+q)+\frac{\sigma_A}{2}\|z\|^2$ and $F:=(B_{-q})^{(\theta,\sigma_B)}$. Since $g$ is proper, lsc and $\alpha_A$-convex, $\partial\bar{g}=((\partial g)_{-q})^{(\theta,\sigma_A)}$ is maximally $(\theta\alpha_A+\sigma_A)$-monotone by Proposition~\ref{prop:strength_resolvent}\eqref{prop:strength_resolvent_I} (see also Example~\ref{ex:proxproj}\eqref{ex:prox} and Remark~\ref{r:gbar}). Note also that since $B$ is locally Lipschitz, so is $F$ by Theorem~\ref{th:lip coco}\eqref{it:local Lip of streng}. Now, consider the variational inequality
 $$ \text{find~~}z^*\in\Hilbert\text{~~such that~~}\langle F(z),z-z^*\rangle+\bar{g}(z)-\bar{g}(z^*)\geq 0\quad\forall z\in\Hilbert, $$
which is equivalent to finding a point $z^*\in\zer(\partial\bar{g}+F)$ (which is nonempty by Proposition~\ref{prop:zero of strengthenings} and Lemma~\ref{l:maxAB}). Set $(z_0,z_1):=\frac{1}{\theta}(x_0-q,x_1-q)$ and $\bar{z}_0=z_1$. Consider the sequences generated by
\begin{equation}\label{eq:graal2}
\left\{\begin{aligned}
\gamma_k &= \min\left\{\rho\gamma_{k-1},\frac{\phi^2}{4\gamma_{k-2}}\frac{\|z_k-z_{k-1}\|^2}{\|F(z_k)-F(z_{k-1})\|^2},\overline{\gamma}\right\} \\
\bar{z}_k &= \frac{(\phi-1)z_k+\bar{z}_{k-1}}{\phi} \\
z_{k+1}   &= \prox_{\gamma_k\bar{g}}\bigl( \bar{z}_k-\gamma_kF(z_k) \bigr).
\end{aligned}\right.
\end{equation}
Then, according to \cite[Theorem~2]{malitsky2019golden}, $(z_k)$ and $(\bar{z}_k)$ converge to a point $z^*\in \zer(\partial\bar{g}+F)$. Since $\prox_{\gamma_k\bar{g}}=J_{\gamma_k\partial\bar{g}}$ by Example~\ref{ex:proxproj}\eqref{ex:prox}, Proposition~\ref{prop:strength_resolvent}\eqref{prop:strength_resolvent_II} gives
\begin{align*}
 z_{k+1} &=\frac{1}{\theta}\left(\prox_{\frac{\gamma_k\theta}{1+\gamma_k\sigma_A}g}\left(\frac{\theta}{1+\gamma_k\sigma_A}\left(\bar{z}_k-\gamma_k B(\theta z_k+q)-\gamma_k\sigma_Bz_k\right)+q\right)-q\right).
\end{align*}
Setting $(x_k,\bar{x}_k):=(\theta z_k+q,\theta \bar{z}_k+q)$ in \eqref{eq:graal2} gives \eqref{eq:graal1}. It follows that $(x_k)$ and $(\bar{x}_k)$ converge to $x^*:=\theta z^*+q$ and $x^*=J_{\frac{\theta}{\sigma_A+\sigma_B}(\partial g+B)}(q)$, by Proposition~\ref{prop:zero of strengthenings}.
\end{proof}

\begin{theorem}[Primal-dual method]\label{th:primaldual}
Suppose $\dim\Hilbert<+\infty$ and $\dim\Hilbert'<+\infty$. Let $\sigma>0$, $g\colon\Hilbert\to{]-\infty,+\infty]}$ be proper, lsc and $(-\sigma)$-convex, $\phi\colon\Hilbert'\to {]-\infty,+\infty]}$ be proper, lsc and convex, and $K\colon\Hilbert\to\Hilbert'$ be a linear operator. Suppose $q\in\ran\left(\Id+\frac{1}{\sigma}(\partial g+K^*\circ\partial\phi\circ K)\right)$.
Choose $(x_0,y_0)\in\Hilbert\times\Hilbert', \lambda\in[0,1]$ and $\gamma,\tau>0$ such that $\gamma\tau\|K\|^2<1$. Set $\bar{x}_0:=x_0$ and, for $k\geq 1$, consider
\begin{equation}\label{eq:s pd}
\left\{\begin{aligned}
      y_{k+1} &= \prox_{\gamma\phi^*}\bigl(y_k+\gamma K\bar{x}_k\bigr) \\
      x_{k+1} &= \prox_{\frac{\tau}{1+\tau\sigma}g}\left(\frac{1}{1+\tau\sigma}(x_k-\tau K^*y_{k+1}+\tau\sigma q)\right) \\
\bar{x}_{k+1} &= x_{k+1}+\lambda\bigl(x_{k+1}-x_k\bigr). \\
\end{aligned}\right.
\end{equation}
Then $(x_k)$ and $(\bar{x}_k)$ converge to $\prox_{\frac{1}{\sigma}(g+\phi\circ K)}(q)$.
\end{theorem}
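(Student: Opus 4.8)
The plan is to recognise the stated iteration as the classical Chambolle--Pock primal--dual algorithm applied, after a change of variables, to a genuinely convex reformulation obtained by strengthening. First I would absorb the weak convexity of $g$ into a quadratic: define $\bar g(z):=g(z+q)+\frac{\sigma}{2}\|z\|^2$, which is proper, lsc and convex precisely because $g$ is $(-\sigma)$-convex, and record via Remark~\ref{r:gbar} that $\partial\bar g=((\partial g)_{-q})^{(1,\sigma)}$. To carry the basepoint $q$ through the composite term I would set $\bar\phi(y):=\phi(y+Kq)$, which is proper, lsc and convex with $\bar\phi^*=\phi^*-\langle Kq,\cdot\rangle$, whence the elementary conjugate-shift identity $\prox_{\gamma\bar\phi^*}(\cdot)=\prox_{\gamma\phi^*}(\cdot+\gamma Kq)$. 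A direct substitution then shows that $z^*$ solves $0\in\partial\bar g(z^*)+K^*\partial\bar\phi(Kz^*)$ if and only if $x^*:=z^*+q$ satisfies $q\in x^*+\frac{1}{\sigma}(\partial g(x^*)+K^*\partial\phi(Kx^*))$.

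Next I would run the standard Chambolle--Pock iteration on the convex problem $\min_z\{\bar g(z)+\bar\phi(Kz)\}$, namely $y_{k+1}=\prox_{\gamma\bar\phi^*}(y_k+\gamma K\bar z_k)$, $z_{k+1}=\prox_{\tau\bar g}(z_k-\tau K^*y_{k+1})$, $\bar z_{k+1}=z_{k+1}+\lambda(z_{k+1}-z_k)$, and verify that the substitution $(x_k,\bar x_k)=(z_k+q,\bar z_k+q)$ turns these three lines into exactly \eqref{eq:s pd}. For the dual line this uses the conjugate-shift identity together with $\bar z_k+q=\bar x_k$; for the primal line it uses Proposition~\ref{prop:strength_resolvent}\eqref{prop:strength_resolvent_II} with $\theta=1$ to rewrite $\prox_{\tau\bar g}=J_{\tau((\partial g)_{-q})^{(1,\sigma)}}$ in terms of $\prox_{\frac{\tau}{1+\tau\sigma}g}$, which is legitimate since $1+\frac{\tau}{1+\tau\sigma}(-\sigma)=\frac{1}{1+\tau\sigma}>0$ so that Example~\ref{ex:proxproj}\eqref{ex:prox} applies; the extrapolation line is immediate. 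The hypotheses $\gamma\tau\|K\|^2<1$ and $\lambda\in[0,1]$ are exactly those of the Chambolle--Pock convergence theorem.

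Finally I would invoke that convergence theorem. Under the same substitution $u=z+q$ one has $\prox_{\frac{1}{\sigma}(g+\phi\circ K)}(q)=q+\argmin_z\{\bar g(z)+\bar\phi(Kz)\}$, and the range hypothesis $q\in\ran(\Id+\frac{1}{\sigma}(\partial g+K^*\circ\partial\phi\circ K))$ translates into the existence of $z^*$ with $0\in\partial\bar g(z^*)+K^*\partial\bar\phi(Kz^*)$, i.e.\ of a saddle point; such a $z^*$ automatically minimises $\bar g+\bar\phi\circ K$ since $\partial\bar g+K^*\circ\partial\bar\phi\circ K\subseteq\partial(\bar g+\bar\phi\circ K)$. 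As $\dim\Hilbert,\dim\Hilbert'<+\infty$, $\bar g$ and $\bar\phi$ are proper, lsc and convex, and a solution exists, the theorem gives $z_k,\bar z_k\to z^*$, and translating back yields $x_k,\bar x_k\to z^*+q\in q+\argmin_z\{\bar g(z)+\bar\phi(Kz)\}=\prox_{\frac{1}{\sigma}(g+\phi\circ K)}(q)$.

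The main obstacle is the boundary nature of the parameters: the strengthened operator $\partial\bar g$ is only $0$-monotone (convex), not strongly monotone, since the relevant quantity $\theta\alpha_A+\sigma_A=-\sigma+\sigma=0$ sits exactly on the boundary excluded by Assumption~\ref{a:1}. Consequently the contraction/$R$-linear machinery of Theorems~\ref{th:fb}--\ref{th:fbf} is unavailable, convergence must instead be extracted from the (merely subsequential-to-exact but non-linear) Chambolle--Pock theory, and existence of a solution can no longer be taken for granted. This is precisely why the explicit range assumption on $q$ must be imposed and then used, via the substitution above, both to guarantee a saddle point for the algorithm and to identify its limit with the desired proximal point.
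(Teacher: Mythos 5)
Your proof is correct and takes essentially the same route as the paper: convexify $g$ by adding $\frac{\sigma}{2}\|\cdot-q\|^2$, invoke the Chambolle--Pock theorem for the resulting convex saddle-point problem (whose saddle point exists by the range assumption), and recover \eqref{eq:s pd} by rewriting the prox of the strengthened function via Proposition~\ref{prop:strength_resolvent}\eqref{prop:strength_resolvent_II}. The only difference is cosmetic: you also translate the primal variable by $q$, which forces the extra (correctly handled) shift $\bar\phi=\phi(\cdot+Kq)$ and the conjugate-shift identity $\prox_{\gamma\bar\phi^*}(\cdot)=\prox_{\gamma\phi^*}(\cdot+\gamma Kq)$, whereas the paper keeps the original variable so that $\phi$ and its prox are left untouched.
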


\begin{proof}
Denote $\bar{g}(x):=g(x)+\frac{\sigma}{2}\|x-q\|^2$. Then $\bar{g}$ is convex and
$$ \partial\bar{g} = \partial g+\sigma(\Id-q) = (\partial g_{-q})^{(1,\sigma)}\circ (\Id-q). $$
 Since $q\in\ran\left(\Id+\frac{1}{\sigma}(\partial g+K^*\circ\partial\phi \circ K)\right)$, there exist points $(x,y)\in\Hilbert\times\Hilbert'$ such that
\begin{equation}\label{eq:pd saddle point}
 \left\{\begin{array}{rl}
   0 &\in \partial\bar{g}(x)+K^*y \\
   y &\in \partial\phi(Kx) \\
 \end{array}\right.
 \iff
 \left\{\begin{array}{rl}
   -K^*y &\in \partial\bar{g}(x) \\
   Kx &\in (\partial\phi)^{-1}(y)=\partial\phi^*(y). \\
 \end{array}\right.
\end{equation}
Using \cite[Theorem~1]{chambolle2011first}, we deduce that the sequences $(x_k)$ and $(y_k)$ given by
\begin{equation}\label{eq:pd gbar}
\left\{\begin{aligned}
      y_{k+1} &= \prox_{\gamma\phi^*}\bigl(y_k+\gamma K\bar{x}_k\bigr) \\
      x_{k+1} &= \prox_{\tau\bar{g}}\bigl(x_k-\tau K^*y_{k+1}\bigr) \\
\bar{x}_{k+1} &= x_{k+1}+\lambda\bigl(x_{k+1}-x_k\bigr) \\
\end{aligned}\right.
\end{equation}
converge to points $x\in\Hilbert$ and $y\in\Hilbert'$, respectively, which satisfy \eqref{eq:pd saddle point}. Moreover, we also have $x=J_{\frac{1}{\sigma}(\partial g+K^*\circ\partial\phi\circ K)}(q)=\prox_{\frac{1}{\sigma}(g+\phi\circ K)}(q).$
By applying \cite[Proposition~23.17(iii)]{bauschke2017} followed by Proposition~\ref{prop:strength_resolvent}\eqref{prop:strength_resolvent_II}, we obtain
\begin{align*}
\prox_{\tau\bar{g}}=J_{\tau(\partial g_{-q})^{(1,\sigma)}\circ(\Id-q)}
&= q+J_{\tau (\partial g_{-q})^{(1,\sigma)}}\circ(\Id-q) \\
&= q+J_{\frac{\tau}{1+\tau\sigma}\partial g}\circ\left(\frac{1}{1+\tau\sigma}\Id+q\right)\circ(\Id-q)-q \\
&=\prox_{\frac{\tau}{1+\tau\sigma}g}\circ\left(\frac{1}{1+\tau\sigma}\left(\Id+\tau\sigma q\right)\right).
\end{align*}
Substituting this expression into \eqref{eq:pd gbar} gives \eqref{eq:s pd}, and the claimed result follows.
\end{proof}

\begin{remark}\label{r:primaldual}
The assumption $q\in\ran\left(\Id+\frac{1}{\sigma}(\partial g+K^*\circ\partial\phi\circ K)\right)$ in Theorem~\ref{th:primaldual} holds under standard constraint qualifications (e.g., $K\dom g\cap\operatorname{cont}\phi\neq\emptyset$, where $\operatorname{cont}\phi$ denotes set of points where $\phi$ is continuous). Indeed, we have
$x=\prox_{\frac{1}{\sigma}(g+\phi\circ K)}(q) \allowbreak \iff 0\in x-q+\frac{1}{\sigma}\partial(g+\phi\circ K)(x). $
When the subdifferential sum-rule holds, the latter is equivalent to
$ 0\in x-q+\frac{1}{\sigma}\bigl(\partial g(x)+K^*\circ\partial\phi(Kx)\bigr)$, which implies $q\in\ran\left(\Id+\frac{1}{\sigma}(\partial g+K^*\circ\partial\phi\circ K)\right)$.
\end{remark}

\begin{remark}
Using the framework devised in this section, several other algorithms of forward-backward-type for finding the resolvent of the sum of two monotone operators can be derived. Examples of addition algorithms for general monotone operators include those based on shadow Douglas--Rachford splitting \cite{csetnek2019shadow}, forward reflected backward splitting \cite{malitsky2018forward}, reflected forward-backward splitting \cite{cevher2019reflected}, projective splitting with forward steps \cite{johnstone2019projective}, three-operator splitting \cite{davis2017three}, the forward-Douglas--Rachford method \cite{ryu2020finding}, and the backward-forward-reflected-backward method \cite{rieger2020backward}. Furthermore, in the special case where the operator $A$ is the normal cone to a convex set, algorithms based on Korpelevich's and on Popov's extragradient methods \cite{korpelevich1976extragradient,popov1980modification}, respectively, can also be derived.
\end{remark}

\section{Resolvent Iterations}\label{s:resolvent iterations}
In this section, we focus on the problem of computing
\begin{equation}\label{eq:JABq'}
  J_{\omega(A+B)}(q),
\end{equation}
for some given $q\in\Hilbert$ and $\omega>0$, where $A\colon\Hilbert\setto\Hilbert$ is maximally $\alpha_A$-monotone and $B\colon\Hilbert\setto\Hilbert$ is maximally $\alpha_B$-monotone. In this situation, we will assume that we have access to the resolvents of both $A$ and $B$. We will also consider the extension to three operators, that is, the problem of computing
\begin{equation}\label{eq:JABCq}
 J_{\omega(A+B+C)}(q),
\end{equation}
where, in addition, $C\colon\Hilbert\setto\Hilbert$ is maximally $\alpha_C$-monotone.

The following assumption is a variant of Assumption~\ref{a:1} with operator $B$ (potentially) set-valued, rather than single-valued.
\begin{assumption}\label{a:1'}
Let $\alpha_A,\alpha_B\in\mathbb{R}$ denote the monotonicity constants associated with the operators $A$ and $B$ in~\eqref{eq:JABq'}, respectively. Suppose $\theta>0$ and $\sigma=(\sigma_A,\sigma_B)\in\mathbb{R}_{++}^2$ satisfy
$$\theta\alpha_A+\sigma_A>0\quad\text{and}\quad\theta\alpha_B+\sigma_B>0.$$
\end{assumption}

\begin{theorem}[Douglas/Peaceman--Rachford algorithm]\label{th:DR}
Let $A,B\colon\Hilbert\setto\Hilbert$ be maximally $\alpha_A$-monotone and maximally $\alpha_B$-monotone, respectively. Let $\theta,\sigma_A,\sigma_B\in\mathbb{R}_{++}$ be such that Assumption~\ref{a:1'} holds, let $\gamma>0$, let $\lambda\in{]0,2]}$ and let $q\in\ran\left(\Id+\frac{\theta}{\sigma_A+\sigma_B}(A+B)\right)$. Given any arbitrary $x_0\in\Hilbert$, consider the sequences generated by
\begin{equation}\label{eq:alg_DR}
\left\{\begin{aligned}
    u_k &= J_{\frac{\gamma\theta}{1+\gamma\sigma_A}A}\left(\frac{1}{1+\gamma\sigma_A}(x_k+\gamma\sigma_Aq)\right) \\
    v_k &= J_{\frac{\gamma\theta}{1+\gamma\sigma_B}B}\left(\frac{1}{1+\gamma\sigma_B}(2u_k- x_k+\gamma\sigma_Bq)\right) \\
    x_{k+1} &= x_k + \lambda(v_k-u_k). \\
   \end{aligned}\right.
\end{equation}
Then $u_k\to J_{\frac{\theta}{\sigma_A+\sigma_B}(A+B)}(q)$ and $x_k\wto x$ with
$$J_{\frac{\gamma\theta}{1+\gamma\sigma_A}A}\left(\frac{1}{1+\gamma\sigma_A}(x+\gamma\sigma_Aq)\right)=J_{\frac{\theta}{\sigma_A+\sigma_B}(A+B)}(q).$$
\end{theorem}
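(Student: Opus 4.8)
The plan is to recognise the iteration \eqref{eq:alg_DR} as the relaxed Douglas--Rachford (and, for $\lambda=2$, the Peaceman--Rachford) method applied to the strengthened operators $\mathbf{A}:=(A_{-q})^{(\theta,\sigma_A)}$ and $\mathbf{B}:=(B_{-q})^{(\theta,\sigma_B)}$, and then to transport the resulting convergence back through a change of variables. By Proposition~\ref{prop:strength_resolvent}\eqref{prop:strength_resolvent_I}, $\mathbf{A}$ and $\mathbf{B}$ are maximally $(\theta\alpha_A+\sigma_A)$- and $(\theta\alpha_B+\sigma_B)$-monotone, and Assumption~\ref{a:1} makes both constants strictly positive, so each is strongly monotone; in particular their resolvents are single-valued with full domain and $\zer(\mathbf{A}+\mathbf{B})$ contains at most one point. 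By Proposition~\ref{prop:zero of strengthenings}, the range condition $q\in\ran\bigl(\Id+\tfrac{\theta}{\sigma_A+\sigma_B}(A+B)\bigr)$ guarantees this set is nonempty and equal to $\bigl\{\tfrac{1}{\theta}\bigl(J_{\frac{\theta}{\sigma_A+\sigma_B}(A+B)}(q)-q\bigr)\bigr\}$; denote this unique zero by $u^*$.

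Next I would set $z_k:=\tfrac{1}{\theta}(x_k-q)$ and verify that \eqref{eq:alg_DR} is precisely
\[ u_k=\theta J_{\gamma\mathbf{A}}(z_k)+q,\qquad v_k=\theta J_{\gamma\mathbf{B}}\bigl(2J_{\gamma\mathbf{A}}(z_k)-z_k\bigr)+q,\qquad z_{k+1}=z_k+\lambda\bigl(J_{\gamma\mathbf{B}}(2J_{\gamma\mathbf{A}}(z_k)-z_k)-J_{\gamma\mathbf{A}}(z_k)\bigr). \]
This is a direct computation using Proposition~\ref{prop:strength_resolvent}\eqref{prop:strength_resolvent_II}: applied to $\mathbf{A}$ it gives $J_{\gamma\mathbf{A}}(z_k)=\tfrac{1}{\theta}(u_k-q)$, since $\tfrac{\theta}{1+\gamma\sigma_A}z_k+q=\tfrac{1}{1+\gamma\sigma_A}(x_k+\gamma\sigma_Aq)$; applied to $\mathbf{B}$ at the reflected point $2J_{\gamma\mathbf{A}}(z_k)-z_k=\tfrac{1}{\theta}(2u_k-x_k-q)$ it gives $J_{\gamma\mathbf{B}}(\,\cdot\,)=\tfrac{1}{\theta}(v_k-q)$, because $\tfrac{\theta}{1+\gamma\sigma_B}(2J_{\gamma\mathbf{A}}(z_k)-z_k)+q=\tfrac{1}{1+\gamma\sigma_B}(2u_k-x_k+\gamma\sigma_Bq)$; and the $x$-update rescales to $z_{k+1}=z_k+\tfrac{\lambda}{\theta}(v_k-u_k)$, which is the stated relaxed reflection step.

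Finally I would invoke the convergence theory of the relaxed Douglas--Rachford algorithm for maximally monotone operators (see, e.g., \cite[Theorem~26.11]{bauschke2017}): for $\lambda\in{]0,2[}$ the governing sequence $(z_k)$ converges weakly to a fixed point $z$ of the underlying averaged operator, and the shadow $J_{\gamma\mathbf{A}}(z_k)$ converges to $u^*$. Strong (norm) convergence of the shadow comes from the strong monotonicity of $\mathbf{A}$, which contributes a nonnegative, summable term controlling $\|J_{\gamma\mathbf{A}}(z_k)-u^*\|^2$ in the Fej\'er-type estimate. Undoing the substitution, $J_{\gamma\mathbf{A}}(z_k)=\tfrac{1}{\theta}(u_k-q)\to u^*$ yields $u_k\to J_{\frac{\theta}{\sigma_A+\sigma_B}(A+B)}(q)$, while $x_k=\theta z_k+q\wto\theta z+q=:x$; evaluating $J_{\gamma\mathbf{A}}$ at the limit $z=\tfrac{1}{\theta}(x-q)$ and rewriting through Proposition~\ref{prop:strength_resolvent}\eqref{prop:strength_resolvent_II} produces the claimed fixed-point characterisation of $x$. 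The main obstacle is the endpoint $\lambda=2$: the Peaceman--Rachford operator $R_{\gamma\mathbf{B}}R_{\gamma\mathbf{A}}$ is merely nonexpansive for general monotone operators and need not converge, so here I would exploit that strong monotonicity of \emph{both} $\mathbf{A}$ and $\mathbf{B}$ turns each reflected resolvent into a strict contraction, whence their composition is a contraction and $(z_k)$ converges by Banach's fixed-point theorem. This is the one place where the strict positivity demanded in Assumption~\ref{a:1}, rather than mere nonnegativity, is indispensable.
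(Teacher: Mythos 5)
Your change of variables and your treatment of the case $\lambda\in{]0,2[}$ are correct and essentially identical to the paper's proof: the paper also passes to $(A_{-q})^{(\theta,\sigma_A)}$ and $(B_{-q})^{(\theta,\sigma_B)}$, invokes \cite[Theorem~26.11]{bauschke2017} together with Proposition~\ref{prop:zero of strengthenings}, and obtains strong convergence of the shadow sequence from part (vi) of that theorem via strong monotonicity of the strengthenings.

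The gap is in your endpoint case $\lambda=2$. Your claim that strong monotonicity of $\mathbf{A}$ turns the reflected resolvent $R_{\gamma\mathbf{A}}=2J_{\gamma\mathbf{A}}-\Id$ into a strict contraction is false for general (set-valued) maximally strongly monotone operators. Concretely, take $\mathbf{A}=N_{\{0\}}$: this operator is maximally $\mu$-strongly monotone for \emph{every} $\mu>0$ (all points of its graph have the same first coordinate, so the strong monotonicity inequality is vacuous), yet $J_{\gamma\mathbf{A}}=P_{\{0\}}=0$ and hence $R_{\gamma\mathbf{A}}=-\Id$, an isometry. Taking $\mathbf{B}=N_{\{0\}}$ as well, the Peaceman--Rachford operator $R_{\gamma\mathbf{B}}\circ R_{\gamma\mathbf{A}}=\Id$ is not a contraction and its fixed-point set is all of $\Hilbert$, so Banach's theorem is unusable (and uniqueness of the fixed point, which Banach would force, genuinely fails). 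Contractivity of the reflected resolvent requires more than strong monotonicity --- e.g.\ that the operator additionally be Lipschitz (single-valued), which is not assumed here. The paper closes this case differently: by \cite[Proposition~5.4]{giselsson_tight}, the reflected resolvent of a maximally strongly monotone operator is \emph{negatively averaged}; the composition of two negatively averaged operators is averaged by \cite[Proposition~3.12 and Remark~3.13]{giselsson_tight}; weak convergence of $(z_k)$ then follows from the Krasnosel'skii--Mann-type result \cite[Proposition~5.16]{bauschke2017}, and strong convergence of the shadow sequence from \cite[Proposition~26.13]{bauschke2017}. Replacing your Banach argument with this averagedness argument repairs the proof; the rest of your write-up can stand as is.
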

\begin{proof}
Set $\bar{x}_0:=\frac{1}{\theta}(x_0-q)$ and consider the sequences
\begin{equation}\label{eq:alg_DR_modif}
\left\{\begin{aligned}
    \bar{u}_k &= J_{\gamma (A_{-q})^{(\theta,\sigma_A)}}(\bar{x}_k) \\
    \bar{v}_k &= J_{\gamma (B_{-q})^{(\theta,\sigma_B)}}(2\bar{u}_k-\bar{x}_k) \\
    \bar{x}_{k+1} &= \bar{x}_k + \lambda(\bar{v}_k-\bar{u}_k). \\
   \end{aligned}\right.
\end{equation}
We distinguish two cases based on the value of $\lambda$. First, suppose that $\lambda\in{]0,2[}$. By combining \cite[Theorem 26.11]{bauschke2017} with Proposition~\ref{prop:zero of strengthenings}, we get that $\bar{x}_k\wto \bar{x}$ and $\bar{u}_k\to \bar{u}$ with $$\bar{u}=J_{\gamma (A_{-q})^{(\theta,\sigma_A)}}(\bar{x})\in\zer\left((A_{-q})^{(\theta,\sigma_A)}+(B_{-q})^{(\theta,\sigma_B)}\right)=\left\{\frac{1}{\theta}\left(J_{\frac{\theta}{\sigma_A+\sigma_B}(A+B)}(q)-q\right)\right\}.$$
Here the strong convergence of $(\bar u_k)$ comes from \cite[Theorem 26.11(vi)]{bauschke2017} and the fact that both $(A_{-q})^{(\theta,\sigma_A)}$ and $(B_{-q})^{(\theta,\sigma_B)}$ are strongly monotone by Proposition~\ref{prop:strength_resolvent}\eqref{prop:strength_resolvent_I}.
Now, using Proposition~\ref{prop:strength_resolvent}\eqref{prop:strength_resolvent_II} and making the change of variables $(x_k,u_k,v_k):=(\theta\bar{x}_k+q,\theta\bar{u}_k+q,\theta\bar{v}_k+q)$, $u:=\theta{\bar{u}}+q$ and $x:=\theta\bar{x}+q$ the iteration in \eqref{eq:alg_DR_modif} reduces to \eqref{eq:alg_DR} and the result follows.

Next, consider the limiting case where $\lambda=2$. Observe that \eqref{eq:alg_DR_modif} can be expressed as
\begin{equation}\label{eq:PR}
\bar{x}_{k+1} = \left(2J_{\gamma (B_{-q})^{(\theta,\sigma_B)}}-\Id\right)\circ \left(2J_{\gamma (A_{-q})^{(\theta,\sigma_A)}} -\Id\right)(\bar{x}_k).
\end{equation}
Since $(A_{-q})^{(\theta,\sigma_A)}$ and $(B_{-q})^{(\theta,\sigma_B)}$ are maximally strongly monotone, their reflected resolvents, $$2J_{\gamma (A_{-q})^{(\theta,\sigma_A)}}-\Id \quad\text{and}\quad2J_{\gamma (B_{-q})^{(\theta,\sigma_B)}}-\Id,$$ are negatively averaged by \cite[Proposition~5.4]{giselsson_tight}. Their composition is therefore averaged by \cite[Proposition~3.12 and Remark~3.13]{giselsson_tight}. Thus, according to \cite[Proposition~5.16]{bauschke2017}, the sequence $(\bar{x}_k)$ generated by \eqref{eq:PR} weakly converges to a fixed point of the composition of the reflected resolvents. The strong convergence of the shadow sequence $(\bar{u}_k)$ is a consequence of~\cite[Proposition 26.13]{bauschke2017}. Finally, by making a change of variables as in the case $\lambda\in{]0,2[}$, the result follows.
\end{proof}

\begin{remark}\label{rem:AAMR}
Theorem~\ref{th:DR} was established in \cite[Theorem~3.2]{dao2019resolvent}, with the exception of the weak convergence of $(x_k)$ in the case $\lambda=2$. As we now explain, it covers the following two schemes from the literature as special cases. In both settings, we assume that $A$ and $B$ are maximally monotone operators and that $\alpha_A=\alpha_B=0$.
\begin{enumerate}[(i)]
\item Set $\theta:=\frac{1}{\beta}$ and $\sigma_A=\sigma_B:=\frac{1-\beta}{\gamma\beta}$
for any $\beta\in{]0,1[}$. Then \eqref{eq:alg_DR} becomes
\begin{equation*}
%x_{k+1}=\left(1-\frac{\lambda}{2}\right)x_k+\frac{\lambda}{2}\left( 2J_{\gamma B}\left(2\beta J_{\gamma_A}(\beta x_k+(1-\beta)q)-\beta x_k+(1-\beta)q\right)-2J_{\gamma A}(\beta x_k+(1-\beta)q)+x_k\right).
\left\{\begin{aligned}
    u_k &= J_{\gamma A}\left(\beta x_k + (1-\beta)q\right) \\
    v_k &= J_{\gamma B}\left(\beta(2u_k-x_k)+(1-\beta)q\right) \\
    x_{k+1} &= \left(1-\frac{\lambda}{2}\right)x_k + \frac{\lambda}{2}(2v_k-2u_k+x_k). \\
   \end{aligned}\right.
\end{equation*}
By taking $y_k:=\beta (x_k-q)$ and $\kappa:=\frac{\lambda}{2}\in{]0,1[}$, this can be rewritten as
\begin{equation*}
y_{k+1}=(1-\kappa)y_k+\kappa(2\beta J_{(\gamma B_{-q})}-\Id)(2\beta J_{(\gamma A_{-q})}-\Id)(y_k),
\end{equation*}
which coincides with the \emph{Averaged Alternating Modified Reflections (AAMR)} algorithm developed in~\cite{aragon2019computing}.

\item  Let $\lambda=2$, $\sigma_A=\sigma_B$, $\gamma=\frac{1}{\sigma_A}$, and $\theta=2\sigma_A$. By denoting $z_k:=\frac{1}{2}(x_k+q)$, \eqref{eq:alg_DR} can be written as
\begin{align*}
x_{k+1}&=x_k-2J_A(z_k)+2J_B\left(J_A(z_k)-\frac{1}{2}x_k+\frac{1}{2}q\right)\\
&=2z_k-q-2J_A(z_k)+2J_B\left(J_A(z_k)-z_k+q\right),
\end{align*}
which is equivalent to
$$z_{k+1}=z_k-J_A(z_k)+J_B\left(J_A(z_k)-z_k+q\right).$$
This coincides with the variant of the Douglas--Rachford algorithm proposed in~\cite{adly2019decomposition}, where it is referred to as ``\emph{Algorithm~$(\mathcal{A})$}''.
While Adly and Bourdin proved weak convergence of $(z_k)$ to a point $z\in (\Id+B\circ J_A)^{-1}(q)$, they only established weak convergence of $(J_A(z_k))$ to $J_A(z)\in J_{A+B}(q)$ (see \cite[Theorem~3]{adly2019decomposition}). In addition to weak convergence of $(z_k)$, Theorem~\ref{th:DR} shows that $(J_A(z_k))$ is actually strongly convergent to $J_{A+B}(q)$.\qedhere
\end{enumerate}
\end{remark}

We now derive a splitting method for computing the resolvent of the sum of three operators based on the scheme proposed by Ryu in~\cite[Section~4]{ryu2019uniqueness} for finding a zero of the sum. We provide a direct proof of its convergence in the infinite dimensional setting in Appendix~\ref{s:appendix}, which also provides new conditions ensuring the convergence of the limiting case $\lambda=1$.

The following assumption is the three operator analogue of Assumption~\ref{a:1'}.
\begin{assumption}\label{a:2}
Let $\alpha_A,\alpha_B,\alpha_C\in\mathbb{R}$ denote the monotonicity constants associated with the operators $A$, $B$ and $C$ in~\eqref{eq:JABCq}, respectively. Suppose $\theta>0$ and $\sigma=(\sigma_A,\sigma_B,\sigma_C)\in\mathbb{R}_{+}^3$ satisfy
$$\theta\alpha_A+\sigma_A>0,\quad \theta\alpha_B+\sigma_B>0,\quad \theta\alpha_C+\sigma_C>0.$$
\end{assumption}
\begin{remark}
As was the case in Remark~\ref{r:a1_v2}, for any $\alpha_A,\alpha_B,\alpha_C\in\mathbb{R}$, there always exist $\theta,\sigma_A,\sigma_B,\sigma_C\in\mathbb{R}_{++}$ satisfying Assumption~\ref{a:2}.
Thus, Assumption~\ref{a:2} does not induce any restrictions on the operators $A,B$ and $C$ in \eqref{eq:JABCq}, but it may restrict the values of $\omega$ for which the resolvent in \eqref{eq:JABCq} can be computed if $\alpha_A$, $\alpha_B$ or $\alpha_C$ is negative. When $A, B$ and $C$ are monotone (i.e., when $\alpha=(\alpha_A,\alpha_B,\alpha_C)\in\mathbb{R}^3_{+})$, Assumption~\ref{a:2} trivially holds.
\end{remark}

\begin{theorem}[Ryu splitting]\label{th:ryu}
Let $A,B,C\colon\Hilbert\setto\Hilbert$ be maximally $\alpha_A$-monotone, maximally $\alpha_B$-monotone, and maximally $\alpha_C$-monotone, respectively. Let $\theta,\sigma_A,\sigma_B,\sigma_C\in\mathbb{R}_{++}$ be such that Assumption~\ref{a:2} holds, let $\gamma>0$, and let $\lambda\in{]0,1]}$. Suppose $q\in\ran\left(\Id+\frac{\gamma\theta}{\sigma_A+\sigma_B+\sigma_C}(A+B+C)\right)$. Given any $x_0,y_0\in \Hilbert$, consider the sequences
\begin{equation}\left\{\begin{aligned}
    u_k &= J_{\frac{\gamma\theta}{1+\sigma_A}A}\left(\frac{1}{1+\gamma\sigma_A} x_k+\frac{\gamma\sigma_A}{1+\gamma\sigma_A}q \right) \\
    v_k &= J_{\frac{\gamma\theta}{1+\gamma\sigma_B}B}\left(\frac{1}{1+\gamma\sigma_B}(u_k+y_k)-\frac{1-\gamma\sigma_B}{1+\gamma\sigma_B}q\right) \\
    w_k &= J_{\frac{\gamma\theta}{1+\gamma\sigma_C}C}\left(\frac{1}{1+\gamma\sigma_C}(u_k-x_k+v_k-y_k)+q \right) \\
    x_{k+1} &= x_k + \lambda(w_k-u_k) \\
    y_{k+1} &= y_k + \lambda(w_k-v_k).
   \end{aligned}\right.\label{eq:ryu sequences}
\end{equation}
Then $u_k\to J_{\frac{\theta}{\sigma_A+\sigma_B+\sigma_C}(A+B+C)}(q)$ as $k\to+\infty$. Further, if $\lambda\neq 1$, then $x_k\wto x$ with
$$ J_{\frac{\gamma}{1+\gamma\sigma_A}A} \left(\frac{1}{1+\gamma\sigma_A}x+\frac{\gamma\sigma_A}{1+\gamma\sigma_A}q\right)=J_{\frac{\theta}{\sigma_A+\sigma_B+\sigma_C}(A+B+C)}(q). $$
\end{theorem}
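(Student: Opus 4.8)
The strategy mirrors the structure used for Theorems~\ref{th:fb}, \ref{th:fbf} and~\ref{th:DR}: reduce the stated iteration to the corresponding ``strengthened'' iteration for the operators $(A_{-q})^{(\theta,\sigma_A)}$, $(B_{-q})^{(\theta,\sigma_B)}$ and $(C_{-q})^{(\theta,\sigma_C)}$, invoke the convergence result for Ryu's three-operator splitting applied to zero-finding of the sum of these three strengthenings, and then translate back via the change of variables $x_k = \theta \bar x_k + q$ (and similarly for the other sequences). Concretely, I would set $\bar x_0 := \frac{1}{\theta}(x_0 - q)$, $\bar y_0 := \frac{1}{\theta} y_0$, and define strengthened sequences $(\bar u_k,\bar v_k,\bar w_k,\bar x_k,\bar y_k)$ by the plain Ryu recursion
\begin{equation*}
\left\{\begin{aligned}
  \bar u_k &= J_{\gamma (A_{-q})^{(\theta,\sigma_A)}}(\bar x_k), \\
  \bar v_k &= J_{\gamma (B_{-q})^{(\theta,\sigma_B)}}(\bar u_k + \bar y_k), \\
  \bar w_k &= J_{\gamma (C_{-q})^{(\theta,\sigma_C)}}(\bar u_k - \bar x_k + \bar v_k - \bar y_k), \\
  \bar x_{k+1} &= \bar x_k + \lambda(\bar w_k - \bar u_k), \\
  \bar y_{k+1} &= \bar y_k + \lambda(\bar w_k - \bar v_k).
\end{aligned}\right.
\end{equation*}

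By Proposition~\ref{prop:strength_resolvent}\eqref{prop:strength_resolvent_I}, under Assumption~\ref{a:2} each of the three strengthened operators is maximally $(\theta\alpha_i+\sigma_i)$-\emph{strongly} monotone, so the convergence theorem for Ryu splitting (established in Appendix~\ref{s:appendix}) applies and guarantees $\bar u_k \to \bar u$ strongly, with $\bar u$ the unique element of $\zer\big((A_{-q})^{(\theta,\sigma_A)}+(B_{-q})^{(\theta,\sigma_B)}+(C_{-q})^{(\theta,\sigma_C)}\big)$; by Proposition~\ref{prop:zero of strengthenings} this equals $\frac{1}{\theta}\big(J_{\frac{\theta}{\sigma_A+\sigma_B+\sigma_C}(A+B+C)}(q)-q\big)$. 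For $\lambda\neq 1$ the same theorem additionally yields weak convergence $\bar x_k \wto \bar x$ to a point whose resolvent image recovers the target, while for the limiting case $\lambda=1$ only the shadow sequence is controlled. The hypothesis $q\in\ran(\Id+\tfrac{\gamma\theta}{\sigma_A+\sigma_B+\sigma_C}(A+B+C))$ ensures the zero set is nonempty (equivalently $J_{\frac{\theta}{\sigma_A+\sigma_B+\sigma_C}(A+B+C)}(q)\neq\emptyset$), so the fixed-point set in the appendix result is nonempty.

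The remaining work is the algebraic verification that the change of variables $(x_k,u_k,v_k,w_k,y_k) := (\theta\bar x_k + q,\ \theta\bar u_k + q,\ \theta\bar v_k + q,\ \theta\bar w_k + q,\ \theta\bar y_k)$ turns the strengthened recursion into~\eqref{eq:ryu sequences}. Applying Proposition~\ref{prop:strength_resolvent}\eqref{prop:strength_resolvent_II} to each resolvent expresses $J_{\gamma(A_{-q})^{(\theta,\sigma_A)}}$ as $\tfrac{1}{\theta}\big(J_{\frac{\gamma\theta}{1+\gamma\sigma_A}A}\circ(\tfrac{\theta}{1+\gamma\sigma_A}\Id+q)-q\big)$, and substituting $\bar x_k = \tfrac{1}{\theta}(x_k-q)$ reproduces the $u_k$-update; the $v_k$- and $w_k$-updates follow identically, with the constant terms $-\tfrac{1-\gamma\sigma_B}{1+\gamma\sigma_B}q$ and $+q$ emerging from tracking the shifts $\pm q$ introduced by the perturbation and the scaling $\theta$. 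The update lines $x_{k+1}=x_k+\lambda(w_k-u_k)$ and $y_{k+1}=y_k+\lambda(w_k-v_k)$ are invariant under the affine change of variables. Finally, translating back the limit gives $u_k = \theta\bar u_k + q \to \theta\bar u + q = J_{\frac{\theta}{\sigma_A+\sigma_B+\sigma_C}(A+B+C)}(q)$, and for $\lambda\neq 1$ the weak limit $x = \theta\bar x + q$ satisfies the stated resolvent identity.

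The main obstacle is not conceptual but lies in two places. First, the entire reduction rests on the Ryu-splitting convergence theorem in Appendix~\ref{s:appendix}, whose proof must handle strong monotonicity (for strong convergence of the shadow) and the delicate $\lambda=1$ limiting case; the cleanliness of this theorem does the heavy lifting here. Second, I would pay close attention to the bookkeeping of the affine shifts in the $v_k$- and $w_k$-updates, since the arguments $\bar u_k+\bar y_k$ and $\bar u_k-\bar x_k+\bar v_k-\bar y_k$ mix sequences carrying different additive constants ($+q$ versus $0$) under the change of variables; verifying that these combine to give exactly the coefficients $\tfrac{1}{1+\gamma\sigma_B}$, $-\tfrac{1-\gamma\sigma_B}{1+\gamma\sigma_B}$, $\tfrac{1}{1+\gamma\sigma_C}$ and $+q$ appearing in~\eqref{eq:ryu sequences} is where a sign or scaling slip is most likely to occur.
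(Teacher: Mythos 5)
Your overall strategy is exactly the paper's: run the plain Ryu recursion on the strengthened operators $(A_{-q})^{(\theta,\sigma_A)}$, $(B_{-q})^{(\theta,\sigma_B)}$, $(C_{-q})^{(\theta,\sigma_C)}$, invoke Theorem~\ref{th:ryu appendix}\eqref{it:ryu a ii} for strong convergence of the shadow sequence (the strengthenings being strongly, hence uniformly, monotone under Assumption~\ref{a:2}) and Theorem~\ref{th:ryu appendix}\eqref{it:ryu a i} for weak convergence when $\lambda\neq1$, identify the zero set via Proposition~\ref{prop:zero of strengthenings} using the range hypothesis, and translate back with Proposition~\ref{prop:strength_resolvent}\eqref{prop:strength_resolvent_II}. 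However, your change of variables is wrong in precisely the place you flagged as delicate, and as written the reduction does not yield \eqref{eq:ryu sequences}. You set $\bar y_0:=\frac{1}{\theta}y_0$ and $y_k:=\theta\bar y_k$, whereas the $y$-sequence must be shifted exactly like the others: the paper takes $\bar y_0:=\frac{1}{\theta}(y_0-q)$ and $y_k:=\theta\bar y_k+q$. To see that your choice fails, consider the $v_k$-line. Proposition~\ref{prop:strength_resolvent}\eqref{prop:strength_resolvent_II} gives
\begin{equation*}
\theta\bar v_k + q \;=\; J_{\frac{\gamma\theta}{1+\gamma\sigma_B}B}\left(\frac{\theta}{1+\gamma\sigma_B}(\bar u_k+\bar y_k)+q\right).
\end{equation*}
Under your substitution, $\theta(\bar u_k+\bar y_k)=u_k+y_k-q$, so the argument of the resolvent becomes
\begin{equation*}
\frac{1}{1+\gamma\sigma_B}(u_k+y_k)+\frac{\gamma\sigma_B}{1+\gamma\sigma_B}\,q,
\end{equation*}
which differs from the argument $\frac{1}{1+\gamma\sigma_B}(u_k+y_k)-\frac{1-\gamma\sigma_B}{1+\gamma\sigma_B}q$ appearing in \eqref{eq:ryu sequences} by the nonzero term $\frac{1}{1+\gamma\sigma_B}q$. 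The same defect occurs in the $w_k$-line, where you would obtain the coefficient $\frac{\gamma\sigma_C}{1+\gamma\sigma_C}q$ in place of the stated $+q$. So your argument, as it stands, establishes convergence of a different iteration, not the one in the theorem.

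The repair is one line. With $y_k:=\theta\bar y_k+q$ one has $\theta(\bar u_k+\bar y_k)=u_k+y_k-2q$, and the argument of the $B$-resolvent is $\frac{u_k+y_k-2q}{1+\gamma\sigma_B}+q=\frac{1}{1+\gamma\sigma_B}(u_k+y_k)-\frac{1-\gamma\sigma_B}{1+\gamma\sigma_B}q$, as required; in the $w_k$-line the four shifts cancel, $\theta(\bar u_k-\bar x_k+\bar v_k-\bar y_k)=u_k-x_k+v_k-y_k$, giving the stated argument $\frac{1}{1+\gamma\sigma_C}(u_k-x_k+v_k-y_k)+q$. The update lines $x_{k+1}=x_k+\lambda(w_k-u_k)$ and $y_{k+1}=y_k+\lambda(w_k-v_k)$ are insensitive to the additive constant because $q$ cancels in differences, which is why your variant looks consistent there while being inconsistent in the resolvent arguments. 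Everything else in your outline --- maximal strong monotonicity of the strengthenings, nonemptiness of the zero set from the range hypothesis, identification of the limit, and the final translation --- matches the paper's proof; the gap is this single bookkeeping error, but it is exactly the step on which the equivalence of the two iterations rests.
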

\begin{proof}
Set $(\bar{x}_0,\bar{y}_0):=\frac{1}{\theta}(x_0-q,y_0-q)$ and consider the sequences
\begin{equation}\label{eq:ryu strengthenings}
 \left\{\begin{aligned}
    \bar{u}_k &= J_{\gamma (A_{-q})^{(\theta,\sigma_A)}}(\bar{x}_k) \\
    \bar{v}_k &= J_{\gamma (B_{-q})^{(\theta,\sigma_B)}}(\bar{u}_k+\bar{y}_k) \\
    \bar{w}_k &= J_{\gamma (C_{-q})^{(\theta,\sigma_C)}}(\bar{u}_k-\bar{x}_k+\bar{v}_k-\bar{y}_k) \\
    \bar{x}_{k+1} &= \bar{x}_k + \lambda(\bar{w}_k-\bar{u}_k) \\
    \bar{y}_{k+1} &= \bar{y}_k + \lambda(\bar{w}_k-\bar{v}_k).
   \end{aligned}\right.
\end{equation}
By Assumption~\ref{a:2}, the sum $(A_{-q})^{(\theta,\sigma_A)}+(B_{-q})^{(\theta,\sigma_B)}+ (C_{-q})^{(\theta,\sigma_C)}$ is $\alpha$-strongly monotone for $\alpha := \bigl(\theta\alpha_A+\sigma_A\bigr)+\bigl(\theta\alpha_B+\sigma_B\bigr)+\bigl(\theta\alpha_C+\sigma_C\bigr)>0$. By assumption  $q\in\ran\left(\Id+\frac{\gamma\theta}{\sigma_A+\sigma_B+\sigma_C}(A+B+C)\right)$ and hence Proposition~\ref{prop:zero of strengthenings} implies
 $$\zer\left( (A_{-q})^{(\theta,\sigma_A)}+(B_{-q})^{(\theta,\sigma_B)}+(C_{-q})^{(\theta,\sigma_C)}\right)=\left\{\frac{1}{\theta}\left(J_{\frac{\theta}{\sigma_A+\sigma_B+\sigma_C}(A+B+C)}(q)-q\right)\right\}.$$
Appealing to Theorem~\ref{th:ryu appendix}\eqref{it:ryu a ii} gives $u_k\to\bar{u}$, where
$$
\bar{u}\in\zer\left( ( A_{-q})^{(\theta,\sigma_A)}+(B_{-q})^{(\theta,\sigma_B)}+(C_{-q})^{(\theta,\sigma_C)}\right). $$
Further, if $\lambda\in{]0,1[}$, then Theorem~\ref{th:ryu appendix}\eqref{it:ryu a i}
gives $(\bar{x}_k,\bar{y}_k)\wto (\bar{x},\bar{y})$ where $\bar{x}$ satisfies
$$
\bar{u}:=J_{\gamma (A_{-q})^{(\theta,\sigma_A)}}(\bar{x})\in\zer\left( ( A_{-q})^{(\theta,\sigma_A)}+(B_{-q})^{(\theta,\sigma_B)}+(C_{-q})^{(\theta,\sigma_C)}\right). $$
Finally, by applying Proposition~\ref{prop:strength_resolvent}\eqref{prop:strength_resolvent_II}, we may write \eqref{eq:ryu strengthenings} as
$$ \left\{\begin{aligned}
    \theta \bar{u}_k &= J_{\frac{\gamma\theta}{1+\gamma\sigma_A}A}\left(\frac{\theta}{1+\gamma\sigma_A} \bar{x}_k + q\right) -q \\
    \theta \bar{v}_k &= J_{\frac{\gamma\theta}{1+\gamma\sigma_B}B}\left(\frac{\theta}{1+\gamma\sigma_B}(\bar{u}_k+\bar{y}_k)+q\right)-q \\
    \theta \bar{w}_k &= J_{\frac{\gamma\theta}{1+\gamma\sigma_C}C}\left(\frac{\theta}{1+\gamma\sigma_C}(\bar{u}_k-\bar{x}_k+\bar{v}_k-\bar{y}_k)+q\right)-q
   \end{aligned}\right. $$
and $\theta J_{\gamma(A_{-q})^{(\theta,\sigma_A)}}(\bar{x})=J_{\frac{\gamma}{1+\gamma\sigma_A}A} \left(\frac{\theta}{1+\gamma\sigma_A}\bar{x}+q\right)-q$. The claimed result follows by making the change of variables $(x_k,y_k,u_k,v_k,w_k):=(\theta\bar{x}_k+q,\theta\bar{y}_k+q,\theta\bar{u}_k+q,\theta\bar{v}_k+q,\theta\bar{w}_k+q)$ for all $k\in\mathbb{N}$ and $(x,u):=(\theta\bar{x}+q,\theta\bar{u}+q)$.
\end{proof}

\begin{remark}
Consider the case with $B=0$ and $\sigma_B=0$. Although this setting is not covered by Assumption~\ref{a:2} as $\sigma_B\not>0$, it is easily covered by an extension analogous to the one described in Remark~\ref{r:a1_v2}. In such a case, the sequence $(v_k)$ in \eqref{eq:ryu sequences} simplifies to
$$ v_k = u_k+y_k-q.$$
Using this identity to eliminate $(v_k)$ and $(y_k)$ from \eqref{eq:ryu sequences} gives
\begin{equation*}\left\{\begin{aligned}
    u_k &= J_{\frac{\gamma\theta}{1+\sigma_A}A}\left(\frac{1}{1+\gamma\sigma_A} x_k+\frac{\gamma\sigma_A}{1+\gamma\sigma_A}q \right) \\
    w_k &= J_{\frac{\gamma\theta}{1+\gamma\sigma_C}C}\left(\frac{1}{1+\gamma\sigma_C}(2u_k-x_k)+\frac{\gamma\sigma_C}{1+\gamma\sigma_C}q \right) \\
    x_{k+1} &= x_k + \lambda(w_k-u_k),
   \end{aligned}\right.
\end{equation*}
which coincides with \eqref{eq:alg_DR} applied to the operators $A$ and $C$.
\end{remark}

\section{Applications}\label{s:applications}
In this section, we consider three applications of the framework developed in the previous sections. All computations were run on a machine running Ubuntu 18.04.5 LTS
with an Intel Core i7-8665U CPU and 16GB of memory.

\subsection{Best Approximation Problems}
Let $C_1,\dots,C_m\subseteq\Hilbert$ be closed and convex sets with $\cap_{i=1}^mC_i\neq\emptyset$. Given a point $q\in\Hilbert$, the \emph{best approximation problem} is
\begin{equation}\label{eq:bap}
 \min_{x\in\Hilbert}\frac{1}{2}\|x-q\|^2\text{~such that~}x\in \bigcap_{i=1}^mC_i,
\end{equation}
which is equivalent to the formally unconstrained problem
 $$ \min_{x\in\Hilbert}\frac{1}{2}\|x-q\|^2+\sum_{i=1}^m\iota_{C_i}(x). $$
We therefore see that solving \eqref{eq:bap} is equivalent to computing the proximity operator of $\sum_{i=1}^m\iota_{C_i}$. Thus, assuming the strong CHIP holds (see Remark~\ref{r:sCHIP}), the solution $x$ of \eqref{eq:bap} is given by
 $$ x = J_{\sum_{i=1}^mN_{C_i}}(q)=\prox_{\sum_{i=1}^m\iota_{C_i}}(q). $$
Using results from Section~\ref{s:resolvent iterations}, we obtain the following projection algorithm for best approximation which appears to be new. It will be referred to as \emph{S-Ryu} (which stands for \emph{strengthened-Ryu}).
\begin{corollary}[Best approximation with three sets -- S-Ryu]\label{cor:ryu_3sets}
Let $C_1,C_2,C_3\subseteq\Hilbert$ be closed and convex sets with nonempty intersection, and suppose that $q\in\ran(I+N_{C_1}+N_{C_2}+N_{C_3})$. Let $\beta\in{]0,1[}$ and $\lambda\in{]0,1]}$. Given any $x_0,y_0\in\Hilbert$, consider the sequences
\begin{equation} \left\{\begin{aligned}
    u_k &= P_{C_1}\left(\beta x_k+(1-\beta)q \right) \\
    v_k &= P_{C_2}\left(\beta(u_k+y_k)-(2\beta-1)q\right) \\
    w_k &= P_{C_3}\left(\beta(u_k-x_k+v_k-y_k)+q \right) \\
    x_{k+1} &= x_k + \lambda(w_k-u_k) \\
    y_{k+1} &= y_k + \lambda(w_k-v_k).
   \end{aligned}\right.\label{eq:Ryu_best_approx}
\end{equation}
Then $u_k\to u=P_{C_1\cap C_2\cap C_3}(q)$ as $k\to +\infty$. Furthermore, if $\lambda\neq 1$, then $x_k\wto x$ with $P_{C_1}\left(\beta x+(1-\beta)q\right)=u. $
\end{corollary}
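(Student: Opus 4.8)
The plan is to obtain the result as a direct specialisation of Theorem~\ref{th:ryu} (Ryu splitting) to the normal cone operators $A:=N_{C_1}$, $B:=N_{C_2}$ and $C:=N_{C_3}$. These are maximally monotone by Example~\ref{ex:proxproj}\eqref{ex:prox}, so I would take $\alpha_A=\alpha_B=\alpha_C=0$. The key observation that makes everything collapse is that each $N_{C_i}$ is a cone, so $J_{tN_{C_i}}=P_{C_i}$ for every $t>0$; consequently the precise positive values of the resolvent parameters appearing in \eqref{eq:ryu sequences} are immaterial and each resolvent reduces to the corresponding projector.

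First I would fix the free parameters. Choose any $\theta,\gamma>0$ and set $\sigma_A=\sigma_B=\sigma_C:=\frac{1-\beta}{\gamma\beta}$, which are strictly positive because $\beta\in{]0,1[}$; together with $\alpha_A=\alpha_B=\alpha_C=0$ this verifies Assumption~\ref{a:2}. With this choice $1+\gamma\sigma_A=\tfrac{1}{\beta}$, so that
$$ \frac{1}{1+\gamma\sigma_A}=\beta,\qquad \frac{\gamma\sigma_A}{1+\gamma\sigma_A}=1-\beta,\qquad \frac{1-\gamma\sigma_B}{1+\gamma\sigma_B}=2\beta-1, $$
with the analogous identities for the $B$- and $C$-blocks. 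Substituting these into \eqref{eq:ryu sequences} and replacing each $J_{\frac{\gamma\theta}{1+\gamma\sigma_i}(\cdot)}$ by the corresponding projector turns the Ryu iteration into exactly \eqref{eq:Ryu_best_approx}. This coefficient matching is the only computation required and is routine once the parameters have been chosen.

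Next I would verify the remaining hypothesis and translate the conclusion. Because each $N_{C_i}$ is a cone, $\frac{\theta}{\sigma_A+\sigma_B+\sigma_C}\bigl(N_{C_1}+N_{C_2}+N_{C_3}\bigr)=N_{C_1}+N_{C_2}+N_{C_3}$, so the range condition $q\in\ran(\Id+\frac{\theta}{\sigma_A+\sigma_B+\sigma_C}(A+B+C))$ coincides with the assumed $q\in\ran(\Id+N_{C_1}+N_{C_2}+N_{C_3})$. Theorem~\ref{th:ryu} then gives $u_k\to J_{\frac{\theta}{\sigma_A+\sigma_B+\sigma_C}(A+B+C)}(q)=J_{N_{C_1}+N_{C_2}+N_{C_3}}(q)$. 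To identify this limit, I would invoke Corollary~\ref{cor:zero of strengthenings normalcone}: under the same range condition it yields $q-P_{C_1\cap C_2\cap C_3}(q)\in\bigl(\sum_{i}N_{C_i}\bigr)(P_{C_1\cap C_2\cap C_3}(q))$, hence $P_{C_1\cap C_2\cap C_3}(q)\in J_{\sum_i N_{C_i}}(q)$; since $\sum_i N_{C_i}$ is monotone its resolvent is single-valued (Fact~\ref{f:Jalpha}), so $J_{N_{C_1}+N_{C_2}+N_{C_3}}(q)=P_{C_1\cap C_2\cap C_3}(q)$. The weak-convergence statement for $\lambda\neq1$ is inherited verbatim from Theorem~\ref{th:ryu}, with $J_{\frac{\gamma}{1+\gamma\sigma_A}A}$ reducing to $P_{C_1}$ and its affine argument to $\beta x+(1-\beta)q$.

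The main obstacle is bookkeeping rather than a genuine difficulty: one must check that the affine $q$-terms in all three resolvent arguments transform correctly under the chosen parameters (in particular the sign pattern producing the coefficient $2\beta-1$ in the $v_k$-step), and one must justify the passage from the resolvent of the sum to the projection onto the intersection, which rests on the cone property of the normal cones together with Corollary~\ref{cor:zero of strengthenings normalcone}.
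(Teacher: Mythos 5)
Your proposal is correct and follows essentially the same route as the paper, whose proof is the one-liner ``set $A=N_{C_1}$, $B=N_{C_2}$, $C=N_{C_3}$, $\sigma_A=\sigma_B=\sigma_C=\frac{1-\beta}{\beta}$ and $\gamma=1$ in Theorem~\ref{th:ryu}'' (your choice $\sigma_i=\frac{1-\beta}{\gamma\beta}$ with $\gamma$ free is equivalent, since only the products $\gamma\sigma_i$ matter). The details you supply — the cone property $J_{tN_{C_i}}=P_{C_i}$ making $\theta$ and $\gamma$ immaterial, and the identification $J_{N_{C_1}+N_{C_2}+N_{C_3}}(q)=P_{C_1\cap C_2\cap C_3}(q)$ via Corollary~\ref{cor:zero of strengthenings normalcone} — are exactly the steps the paper leaves implicit.
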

\begin{proof}
Set $A=N_{C_1}, B=N_{C_2}, C=N_{C_3}$, $\sigma_A=\sigma_B=\sigma_C=\frac{1-\beta}{\beta}$ and $\gamma=1$ in Theorem~\ref{th:ryu}.\qedhere
\end{proof}

In order to examine the performance of the algorithm~\eqref{eq:Ryu_best_approx} and the effect of the parameters $\beta$ and $\lambda$, we considered the problem of finding the nearest positive semidefinite doubly stochastic matrix with prescribed entries. Denoting by $\Omega$ the location of the entries that are prescribed and by $M$ the matrix with its values, this problem can be described in terms of three sets:
\begin{align*}
C_1&:=\{X\in\R^{n\times n}: Xe=X^Te=e\},\\
C_2&:=\{X\in\R^{n\times n}: X_{ij}\geq 0\text{ for }i,j=1,\ldots,n\text{ and }X_{ij}=M_{ij}\text{ for all }(i,j)\in\Omega\},\\
C_3&:=\{X\in\R^{n\times n}: X\text{ is positive semidefinite}\},
\end{align*}
where $e=[1,1,\ldots,1]^T\in\R^n$. The projectors onto these sets have a closed form:
\begin{align*}
P_{C_1}(X)&=(I-J)X(I-J)+J,\text{ where }J=ee^T/n,\\
P_{C_2}(X)_{ij}&=\left\{\begin{array}{ll}
M_{ij} &\text{if }(i,j)\in\Omega,\\
\max\{X_{ij},0\}&\text{otherwise,}
\end{array}\right.\\
P_{C_3}(X)&=\frac{Y+P}{2},\text{ where }Y=\frac{X+X^T}{2}\text{ and } Y=UP\text{ is a polar decomposition},
\end{align*}
for $X=(X_{ij})\in\R^{n\times n}$; see \cite[Proposition~4.4]{takouda2005probleme} for the formula for $P_{C_1}$ and \cite[Theorem~2.1]{higham1988computing} for the formula $P_{C_3}$. For further details and extensions, see \cite[Section~3]{aragon2014matrix} and \cite{bauschke2021projecting}.

\begin{figure}[!htbp]
\centering
     \includegraphics[width=.78\textwidth]{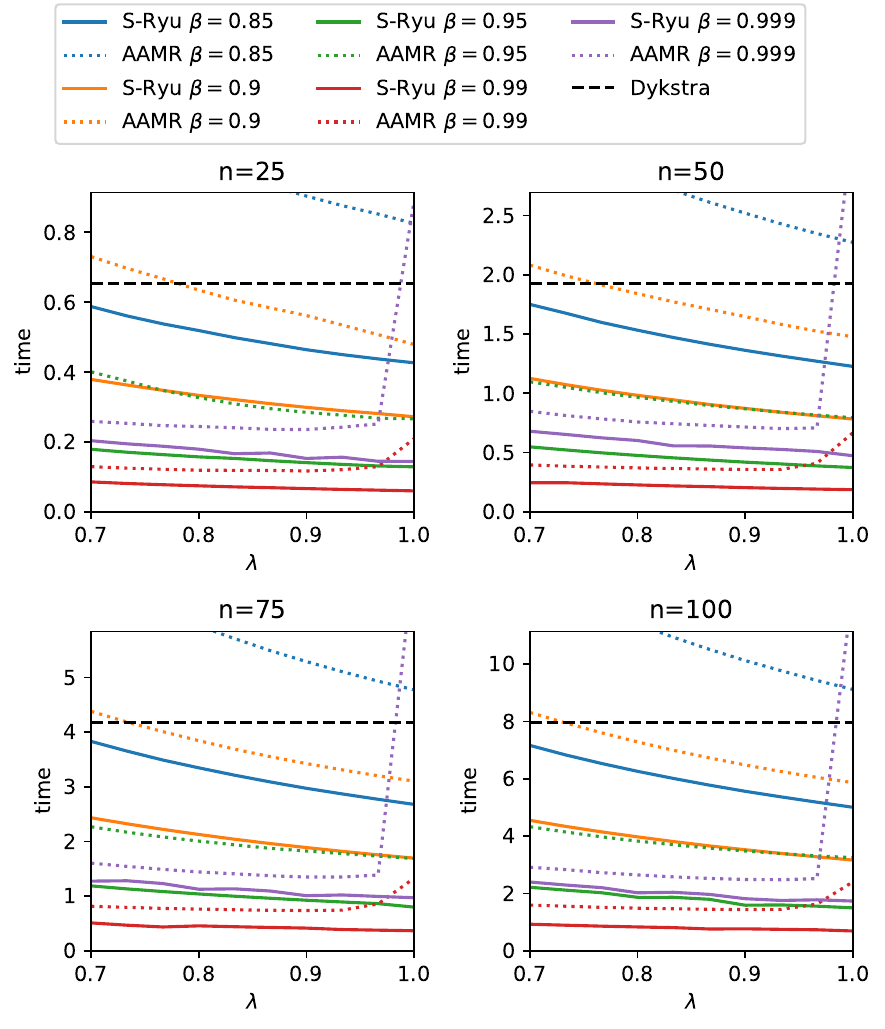}
  \caption{Comparison of the performance of Dykstra~\cite{boyle1986method}, AAMR~\cite{aragon2018new} and S-Ryu~\eqref{eq:Ryu_best_approx} for finding the nearest positive semidefinite doubly stochastic matrix with a prescribed entry. For each pair of parameters $(\beta,\lambda)$ we represent the average time of 20 random instances.\label{fig:matrix_comparison}}
\end{figure}

In our test, we took $\Omega=\{(1,1)\}$ with $M_{11}=0.25$ (that is, we prescribed the first entry to $0.25$). We compared the performance of S-Ryu against Dykstra's method~\cite{boyle1986method} and AAMR~\cite{aragon2018new}. To this aim, we computed the nearest matrix satisfying the constraints to a symmetric matrix with random entries uniformly distributed in $(-2,2)$.
In order to apply Corollary~\ref{cor:ryu_3sets}, for S-Ryu, or \cite[Theorem~5.1]{aragon2018new}, for AAMR, we need to check that the strong CHIP condition holds for $C_1$, $C_2$ and $C_3$ (see Remark~\ref{r:sCHIP}). A sufficient condition is the nonempty intersection of the relative interiors of the three sets (see, e.g., \cite[Corollary 23.8.1]{Rock72}). Since $C_1$ and $C_2$ are affine subspaces, and the relative interior of $C_3$ consists of the positive definite matrices in $X$ (see, e.g.,~\cite[Exercise~5.12]{Berman94}), it suffices to find a positive definite matrix in $C_1\cap C_2$. A possible choice is the matrix
$$M:=\frac{0.25n-1}{n-1}I+\frac{0.75}{n-1}ee^T\in C_1\cap C_2.$$
The matrix $M$ is clearly symmetric and it can be readily checked that its eigenvalues are $1$ and $\frac{0.25n-1}{n-1}$ (with multiplicity $n-1$). Hence, $M$ is positive definite as long as $n\geq 5$, which holds for the instances considered here.

For the algorithm implementations in our first test, we took $10$ values of $\lambda$ equispaced in $[0.7,1]$ and $\beta\in\{0.85,0.9,0.95,0.99,0.999\}$. For each pair of values and each $n\in\{25,50,75,100\}$ we generated $20$ random matrices in $\R^{n\times n}$. We have represented in Figure~\ref{fig:matrix_comparison} the average time required by each of the algorithms to achieve $\sum_{i=1}^3\|U_k-P_{C_i}(U_k)\|\leq 10^{-5}$ for each pair of parameters. For brevity, we do not include the figures with the iteration count because they produce a similar result. In these numerical results, the fastest algorithm was S-Ryu with parameters $(\beta,\lambda)=(.99,1)$.

We performed a second test for larger matrices, where we fixed the value of $(\beta,\lambda)$ to
$(0.99,1)$ for S-Ryu and $(0.99,0.95)$ for AAMR. The results are
summarised in Figure~\ref{fig:matrix_comparison_large}. We observe that S-Ryu was consistently 10 times
faster than Dykstra and more than 2 times faster than AAMR.

\begin{figure}[!htbp]
\centering
  \begin{subfigure}[c]{0.48\textwidth}
     \includegraphics[width=\textwidth]{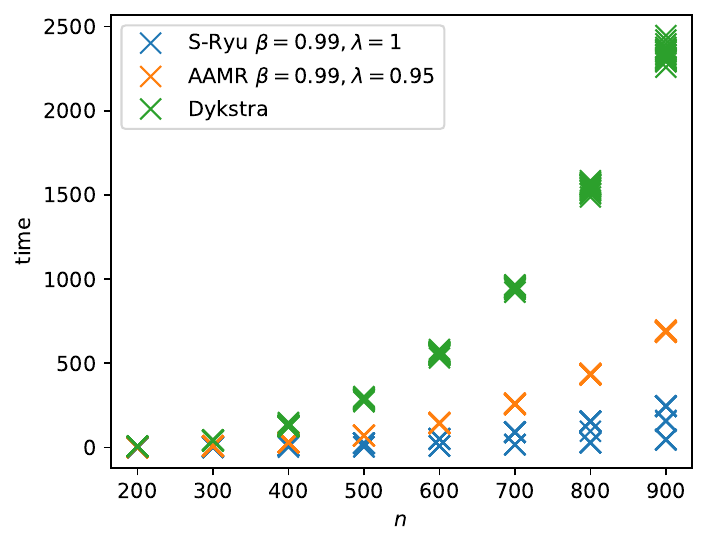}
     \subcaption{Time.}
  \end{subfigure}
  \hfill
  \begin{subfigure}[c]{0.48\textwidth}
     \includegraphics[width=\textwidth]{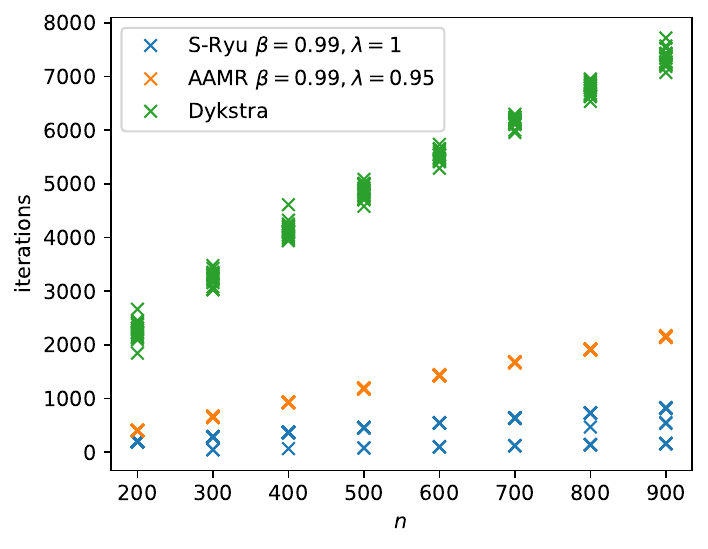}
     \subcaption{Iterations.}
  \end{subfigure}\\
  \begin{subfigure}[c]{0.48\textwidth}
     \includegraphics[width=\textwidth]{{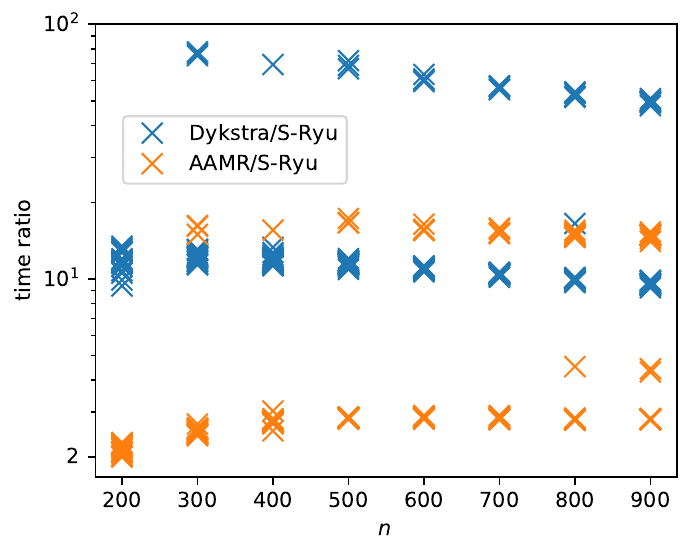}}
     \subcaption{Time ratio (log.\ scale).}
  \end{subfigure}
  \hfill
  \begin{subfigure}[c]{0.48\textwidth}
     \includegraphics[width=\textwidth]{{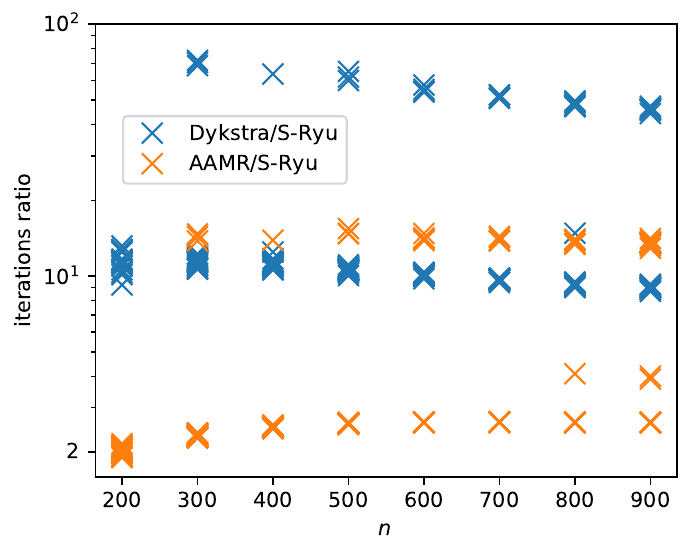}}
     \subcaption{Iterations ratio (in log. scale).}
  \end{subfigure}
  \caption{Comparison of the performance of Dykstra \cite{boyle1986method}, AAMR~\cite{aragon2018new} and S-Ryu~\eqref{eq:Ryu_best_approx} for finding the nearest positive semidefinite doubly stochastic matrix with a prescribed entry on 20 random instances. We represent the time and the iterations required by each algorithm with respect to the size, as well as the ratios.\label{fig:matrix_comparison_large}}
\end{figure}

\subsection{ROF-type Models for Image Denoising}
Let $\phi\colon\Hilbert'\to{]-\infty,+\infty]}$ and $g\colon\Hilbert\to{]-\infty,+\infty]}$ be proper, lsc and convex and let $K\colon\Hilbert\to\Hilbert'$ be a bounded linear operator. Given $q\in\Hilbert$ and $\eta>0$, consider the problem
\begin{equation}\label{eq:rof-proto}
 \min_{x\in\Hilbert}\frac{\eta}{2}\|x-q\|^2+\phi(Kx)+g(x),
\end{equation}
which is equivalent to computing the proximity operator of $\phi\circ K+g$ (with parameter $1/\eta$) at $q$. Using the identity $\phi(Kx)=\phi^{**}(Kx)=\sup_{y\in\Hilbert'}\{\langle Kx,y\rangle-\phi^*(y)\}$, problem~\eqref{eq:rof-proto} may be expressed in saddle-point form as
 $$ \min_{x\in\Hilbert}\sup_{y\in\Hilbert'}\frac{\eta}{2}\|x-q\|^2+\langle Kx,y\rangle-\phi^*(y)+g(x). $$
Solutions to this problem (in the sense of saddle-points \cite[p.~244]{rockafellar1970monotone}) can be characterised by the operator inclusion
$$ \binom{q}{0}
 \in \left( \begin{pmatrix}\Id&0\\0&\Id\\ \end{pmatrix}
            +\binom{\frac{1}{\eta}\partial g}{\partial\bigl(\frac{1}{\eta}\phi^*-\frac{1}{2}\|\cdot\|^2\bigr)}
            +\frac{1}{\eta}\begin{pmatrix} 0 & K^* \\ -K & 0 \\ \end{pmatrix}\right)\binom{x}{y}$$
which is equivalent to $\binom{x}{y} \in J_{A+B}\left(\binom{q}{0}\right)$ where the operators $A$ and $B$ are given by
\begin{equation}\label{eq:rof operators}
A:=\binom{\frac{1}{\eta}\partial g}{\partial\bigl(\frac{1}{\eta}\phi^*-\frac{1}{2}\|\cdot\|^2\bigr)},\quad B:=\frac{1}{\eta}\begin{pmatrix} 0 & K^* \\ -K & 0 \\ \end{pmatrix}.
\end{equation}

Here we note that the operator $A$ is maximally $\alpha_A$-monotone and $B$ is $\alpha_B$-monotone with $\alpha=(-1,0)$. Thus, by choosing $\sigma_B=0$ and $\theta=\sigma_A>0$, we have $\theta\alpha_A+\sigma_A=\theta\alpha_B+\sigma_B=0$ and so we can ensure that conditions in~\eqref{a1_rel} are satisfied. Hence, in view of Remark~\ref{r:fbf}, the strengthened Tseng's method (S-Teng) in Theorem~\ref{th:fbf} converges weakly assuming that a saddle-point for the problem exists.

\begin{figure}[!htbp]
\centering
 \begin{tabular}{ccc}
   $\eta=1$ & $\eta=4$ & $\eta=8$ \\
   \includegraphics[width=3cm]{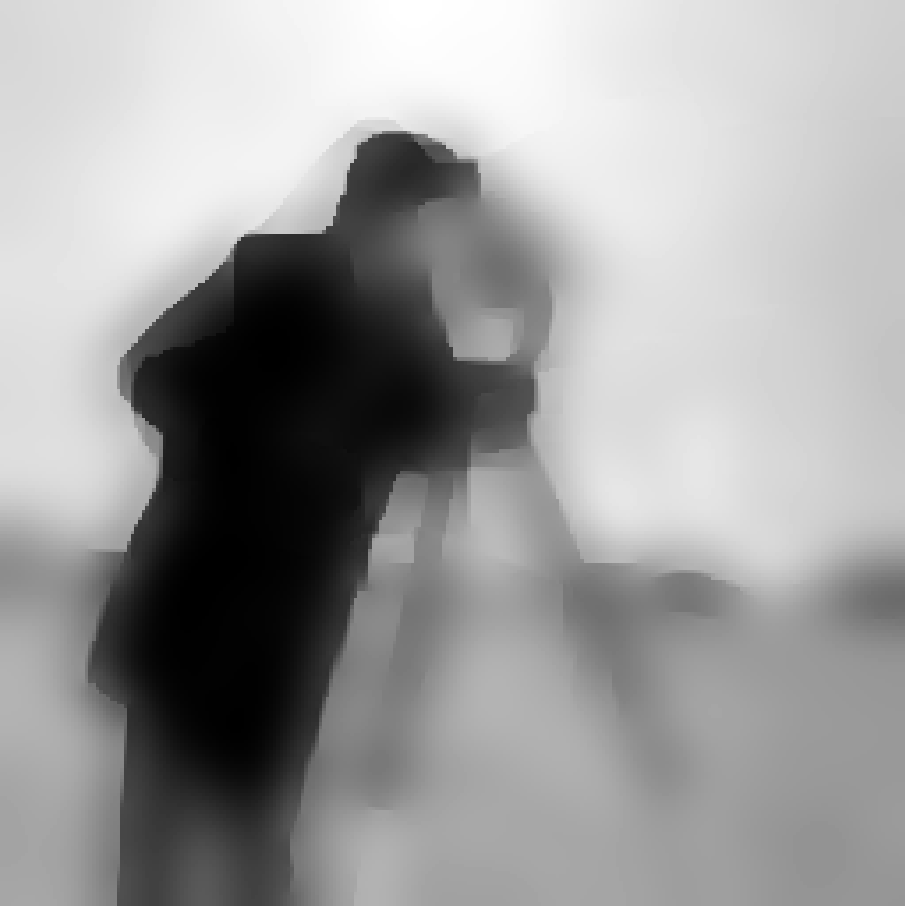}
   & \includegraphics[width=3cm]{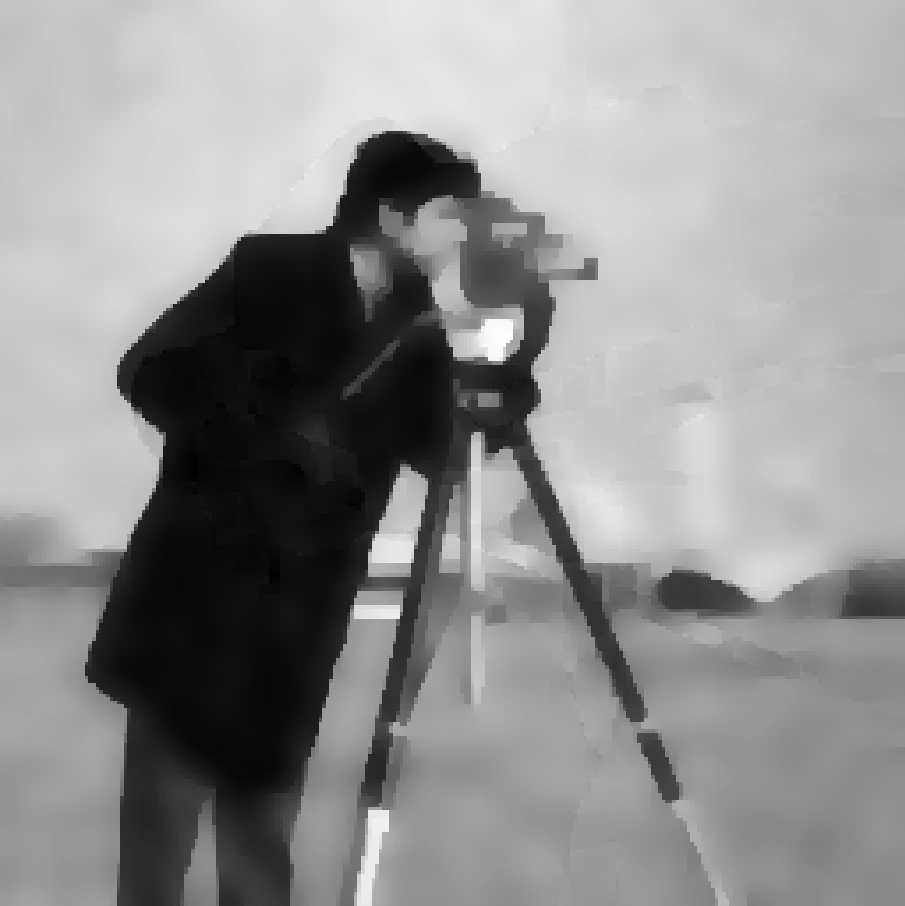}
   & \includegraphics[width=3cm]{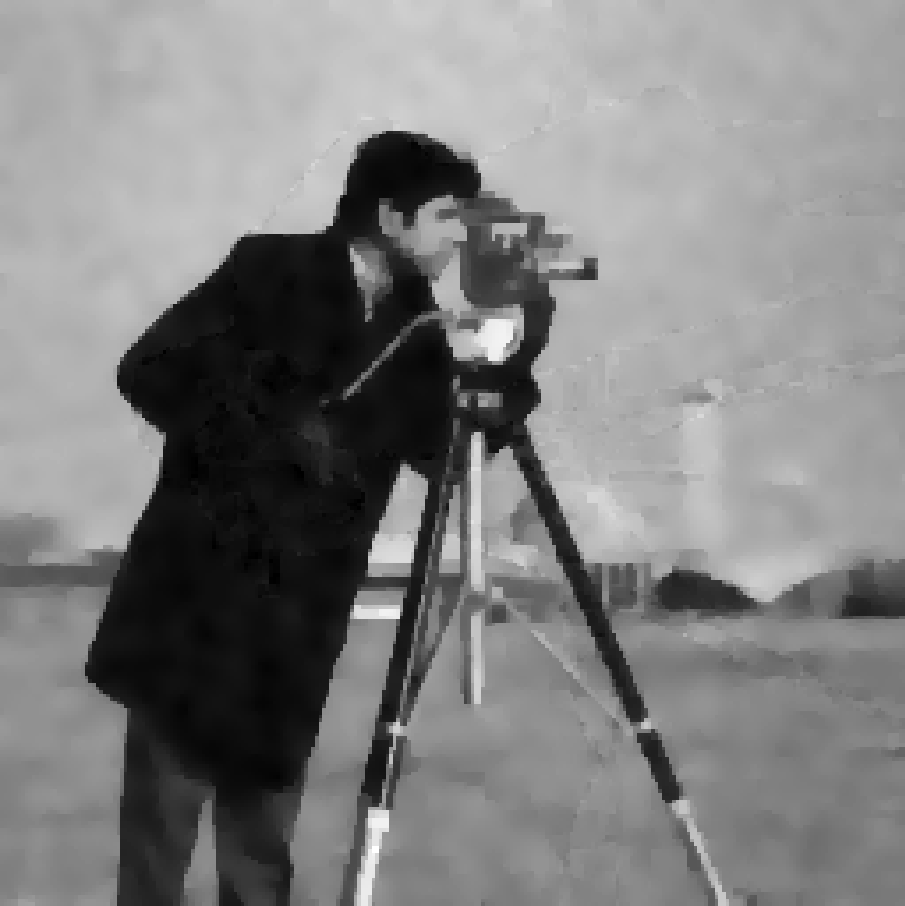}  \\[0.5ex]
   $\eta=12$ & $\eta=16$ & $\eta=20$ \\
   \includegraphics[width=3cm]{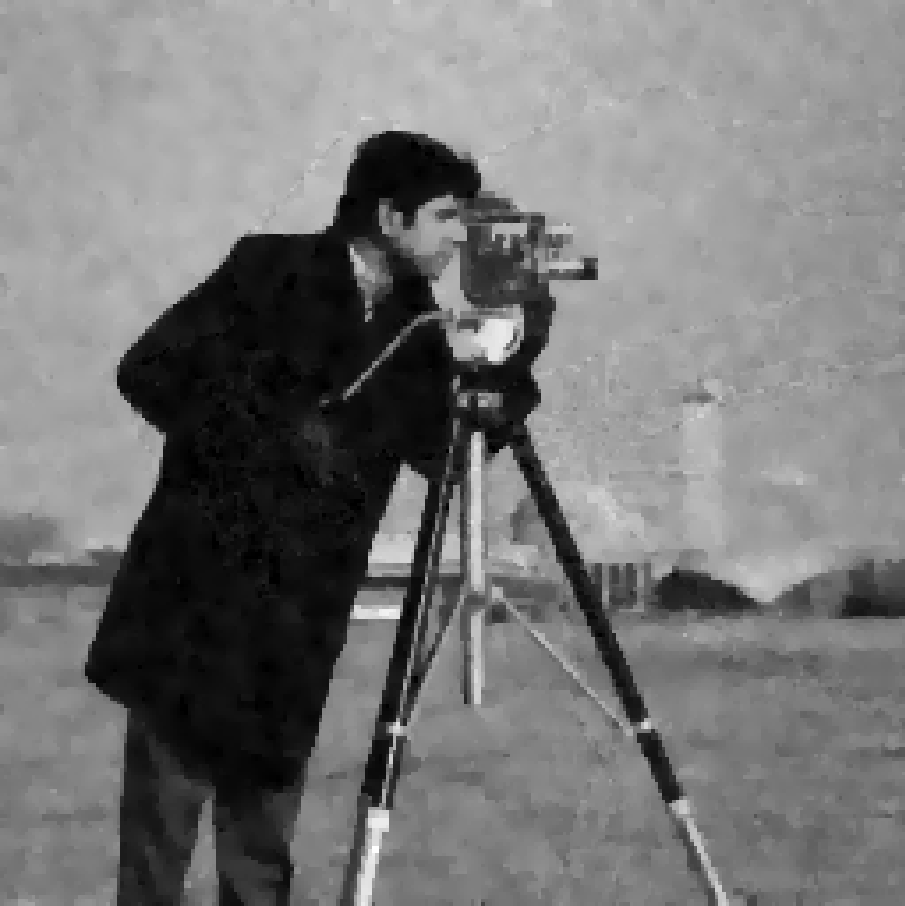}
   & \includegraphics[width=3cm]{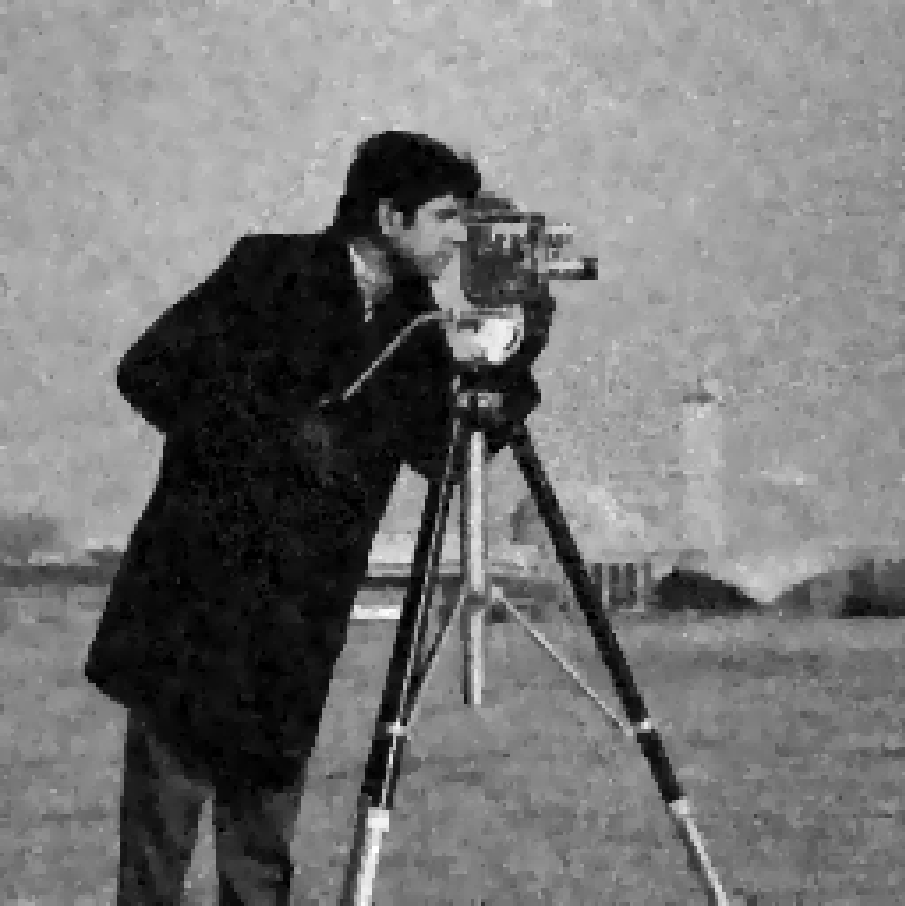}
   & \includegraphics[width=3cm]{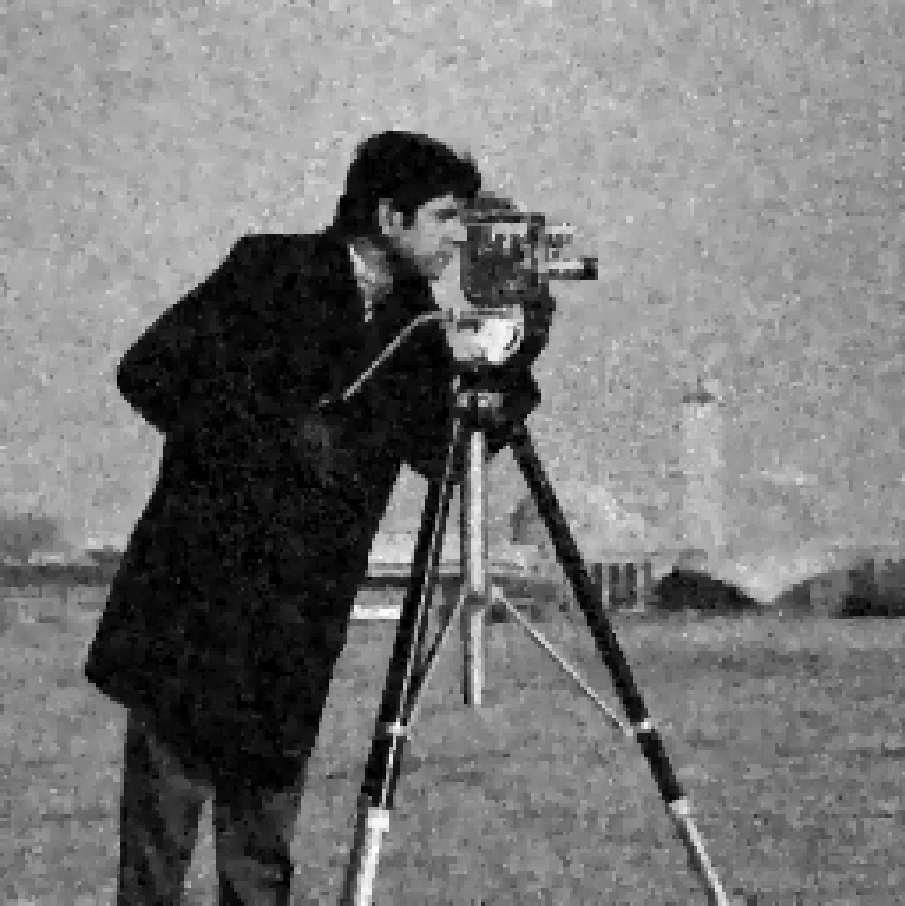} \\
 \end{tabular}
\caption{The effect of $\eta$ on the solution of \eqref{eq:rof-proto}.}\label{fig:reg param}
\end{figure}

Let $\Hilbert=\mathbb{R}^{n\times n}$ represent the $n\times n$ pixel grayscale images (with pixel values in $[0,1]$) and let $\Hilbert'=\mathbb{R}^{n\times n}\times\mathbb{R}^{n\times n}$. Then the \emph{Rudin--Osher--Fatemi (ROF) model} \cite{ROFmodel} for image denoising can be understood as a particular case of \eqref{eq:rof-proto} where $\phi(Kx)$ represents the discrete version of the isotropic TV-norm and $g=0$. In this setting, the $K\colon\Hilbert\to\Hilbert'$ denotes the \emph{discrete gradient} with Lipschitz constant $\sqrt{8}$ and $\phi\bigl(\binom{y^1}{y^2}\bigr)=\sum_{i,j=1}^n\sqrt{ (y^1_{i,j})^2+(y^2_{i,j})^2 }$. For further details, see \cite{chambolle2011first}. A setting with $g\neq 0$ was considered in \cite{chen2019iterative} where it was taken as $g=\iota_C$ for a convex constraint set $C$. The addition of this constraint set allows a priori information about the image to be incorporated. In this work, we take $C = \{x\in\mathbb{R}^{n\times n}:0\leq x\leq 1\}$ to encode the bounds on legal pixel values. Also note that, since $\phi$ is continuous on $\Hilbert'$, the assumptions of Theorem~\ref{th:primaldual} hold (see also Remark~\ref{r:primaldual}).

In order to examine the effect of the varying algorithm parameters on performance, we applied the algorithms presented in Theorems~\ref{th:fbf} and \ref{th:primaldual} to the ROF-denoising model with the constraint $C$ via the operator formulation provided by \eqref{eq:rof operators} and \eqref{eq:rof-proto}, respectively. The regularisation parameter was chosen by trial and error to be $\eta=12$ (see Figure~\ref{fig:reg param}). The noisy image to be denoised is given by $q\in\Hilbert$. For all tests, $\binom{q}{0}\in\Hilbert\times\Hilbert'$ was used as the initial point (i.e., the noisy image was used as the initialisation).

\begin{figure}[!htbp]
\centering
%  \begin{subfigure}[c]{0.95\textwidth}
%  \centering
     \includegraphics[height=4cm]{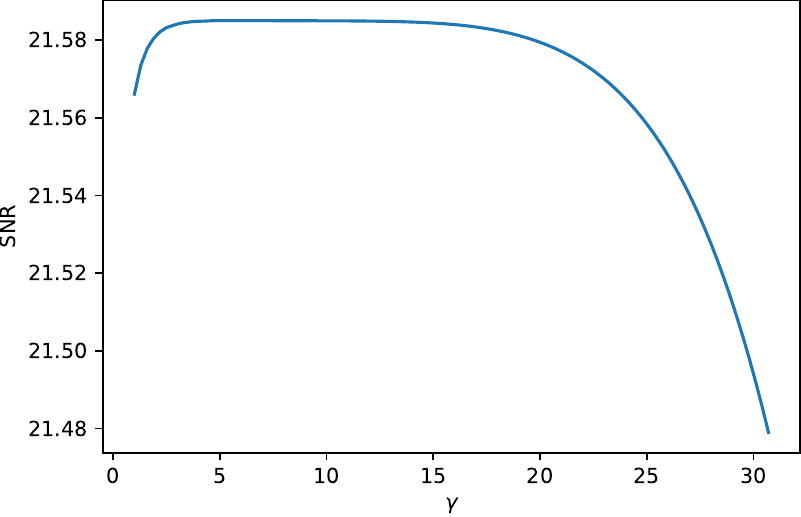} \hspace{1cm}
     \includegraphics[height=4cm]{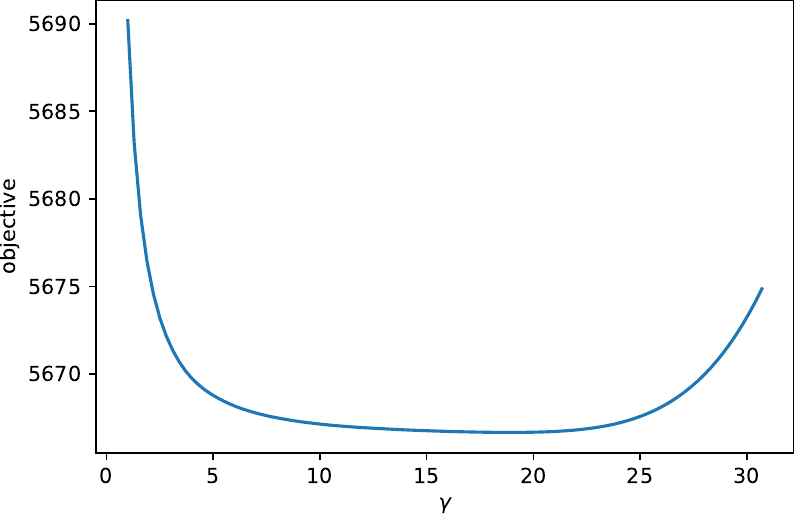}
%     \subcaption{The effect of the dual stepsize, $\gamma$, for S-PD after $100$ iterations with the primal stepsize taken to be $\tau=\frac{0.99}{8\gamma}$.\label{f:rof1_a}}
%  \end{subfigure}

%  \medskip
%
%  \begin{subfigure}[c]{0.95\textwidth}
%   \centering
%     \includegraphics[height=4cm]{} \hspace{1cm}
%     \includegraphics[height=4cm]{}
%     \subcaption{The effect of the strengthening parameter $\sigma=(\sigma_A,\sigma_B)$ for S-Tseng after 100 iterations $\theta=\sigma_A+\sigma_B$. White space indicates parameter combinations where no test was performed because $\sigma_A\not>\sigma_B$.}
%  \end{subfigure}
%  \caption{The effect of varying algorithm parameters on signal to noise ratio (SNR) and objective function value for S-PD and S-Tseng.\label{f:rof1}}
   \caption{The effect of the dual stepsize, $\gamma$, for S-PD after $100$ iterations with the primal stepsize taken to be $\tau=\frac{0.99}{8\gamma}$.\label{f:rof1}}
\end{figure}

Figure~\ref{f:rof1} shows the effect on the signal to noise ratio (SNR) and objective function value after 100 iterations. For the strengthened primal-dual method (S-PD), we examined the effect of changing the dual stepsize, denoted by $\gamma$, with  the primal stepsize chosen as $\tau=\frac{0.99}{\gamma \|K\|^2}=\frac{0.12375}{\gamma}$ so that the condition $\gamma\tau \|K\|^2<1$
holds. The figure suggests $\gamma=15$ as a good choice for S-PD.  For the strengthened Tseng's method (S-Tseng), the stepsize $\gamma$ was chosen to satisfy $\gamma=\frac{0.99}{\theta\kappa+\sigma_B}$ where $\kappa=\sqrt{8}/\eta$ denotes the Lipschitz constant of the operator $B$. This is equivalent to asserting that $\gamma\sigma_A=\frac{11.88}{\sqrt{8}}$.

\begin{figure}[!htbp]
\centering
  \begin{tabular}{lccc@{\hskip 2em}lc}
   \rotatebox[origin=c]{90}{S-PD}
     & \raisebox{-0.5\height}{\includegraphics[width=2.8cm]{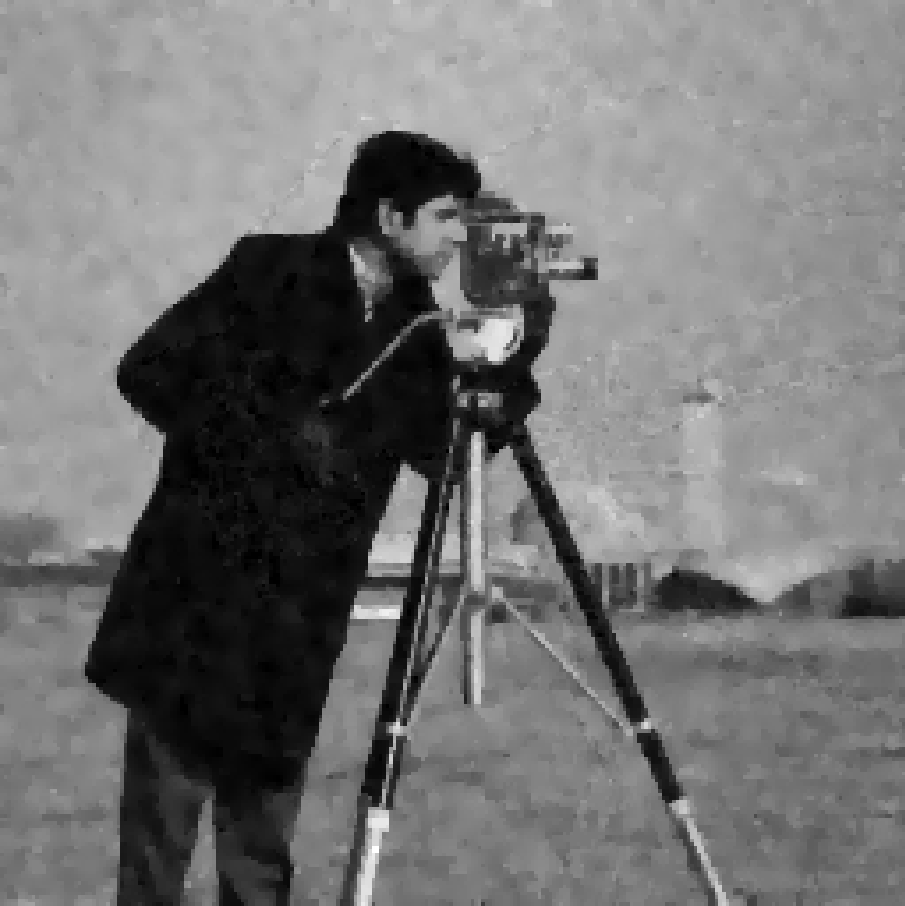}}
     & \raisebox{-0.5\height}{\includegraphics[width=2.8cm]{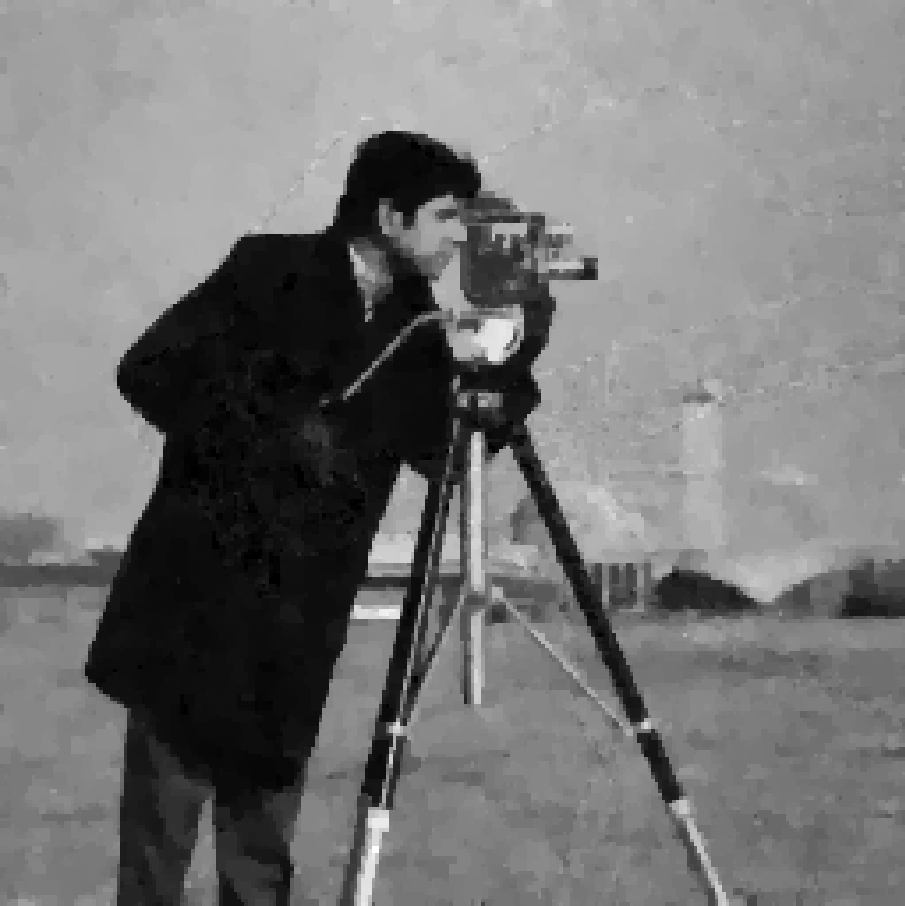}}
     & \raisebox{-0.5\height}{\includegraphics[width=2.8cm]{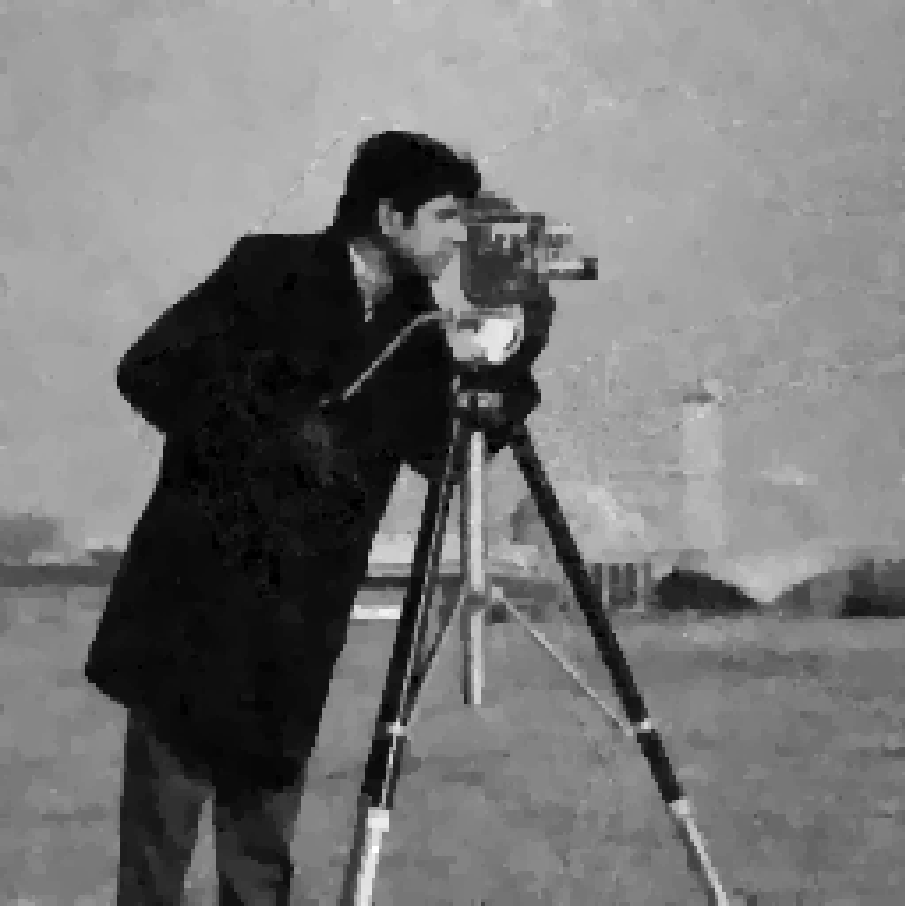}}
     & \rotatebox[origin=c]{90}{Original}
     & \raisebox{-0.5\height}{\includegraphics[width=2.8cm]{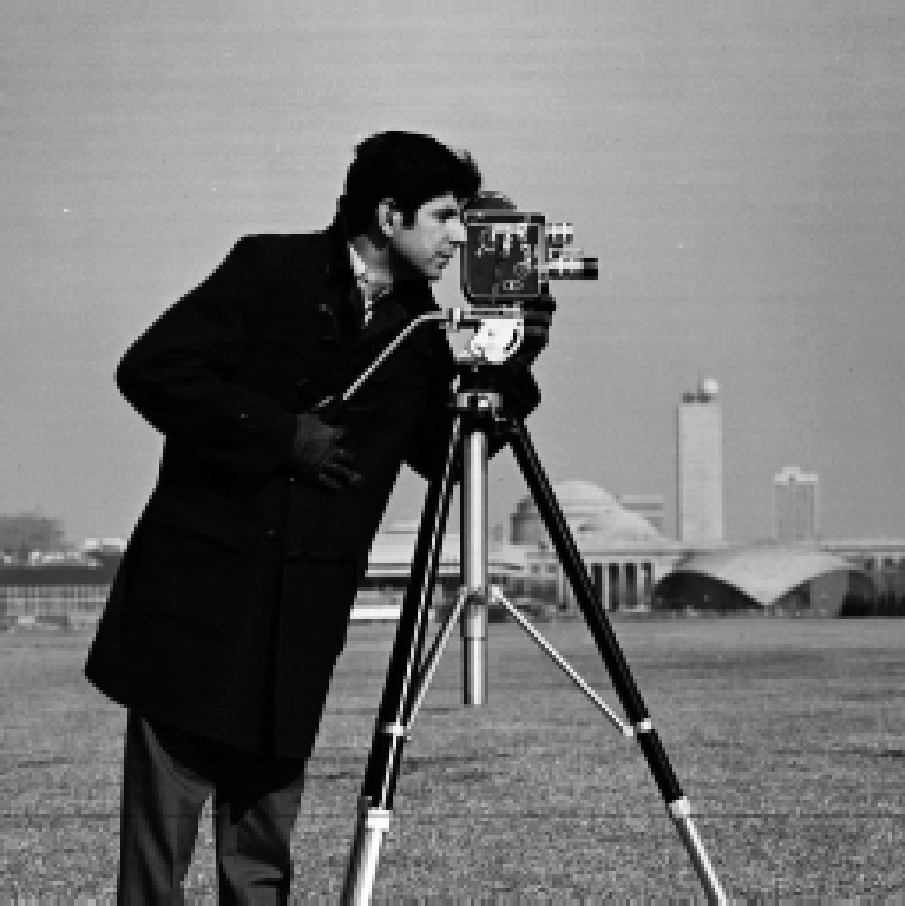}} \\[1.7cm]
   \rotatebox[origin=c]{90}{S-Tseng}
     & \raisebox{-0.5\height}{\includegraphics[width=2.8cm]{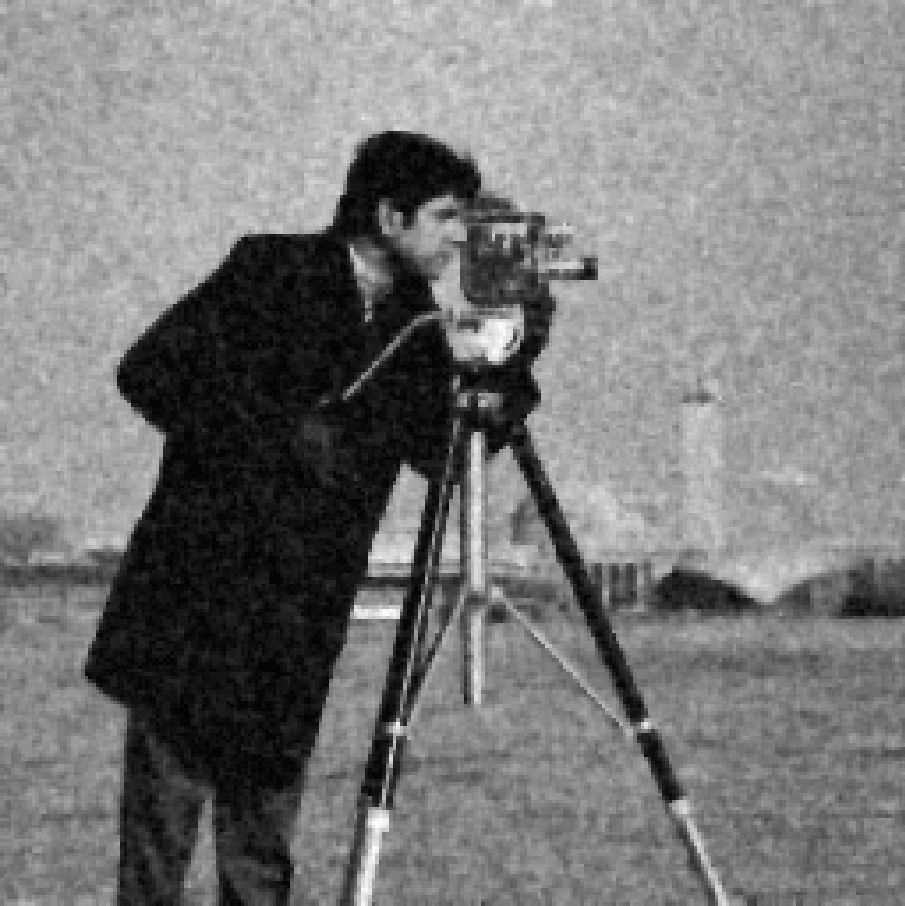}}
     & \raisebox{-0.5\height}{\includegraphics[width=2.8cm]{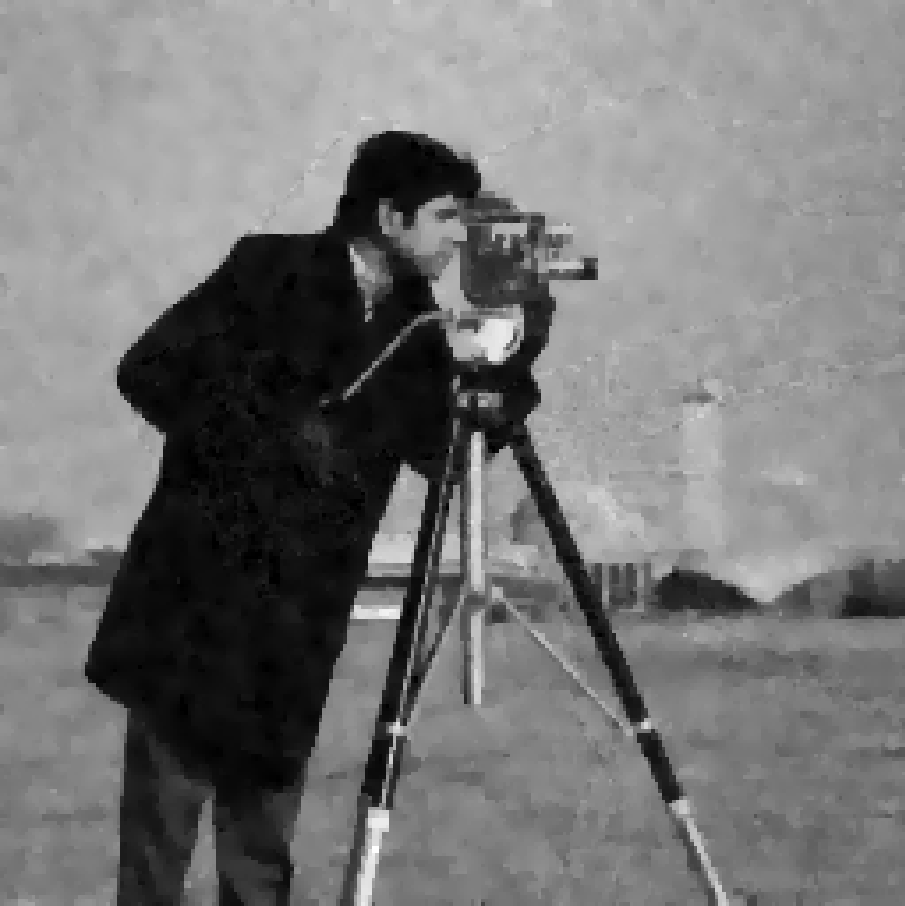}}
     & \raisebox{-0.5\height}{\includegraphics[width=2.8cm]{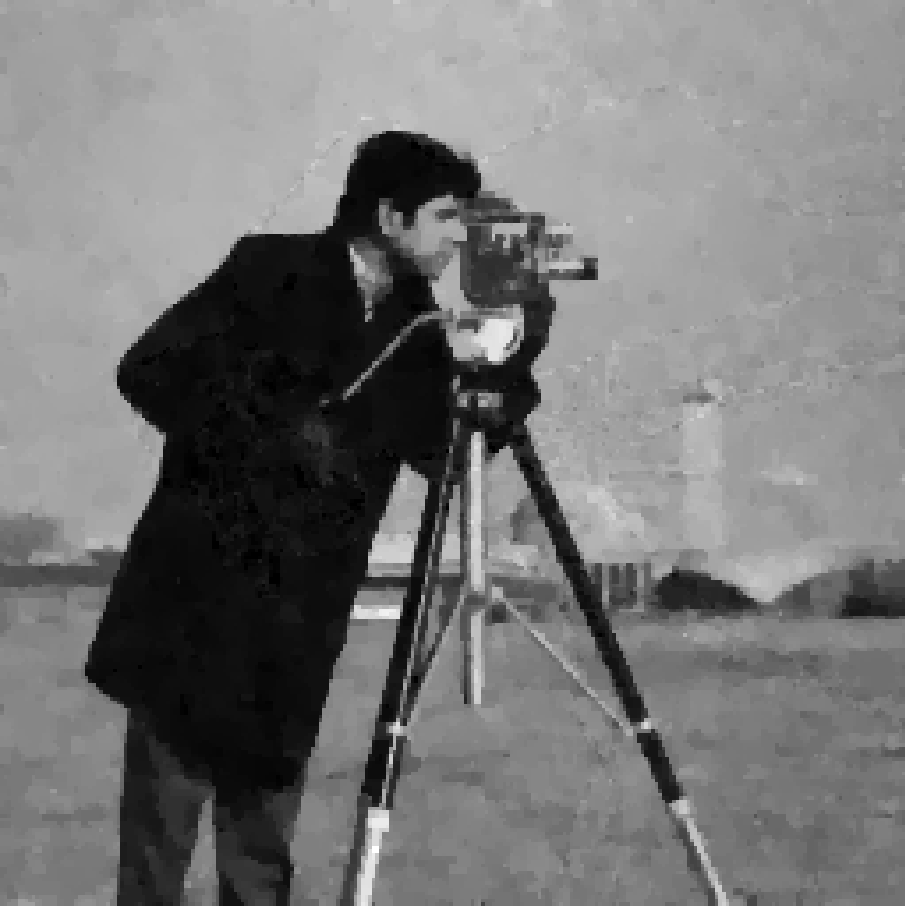}}
     & \rotatebox[origin=c]{90}{Noisy}
     & \raisebox{-0.5\height}{\includegraphics[width=2.8cm]{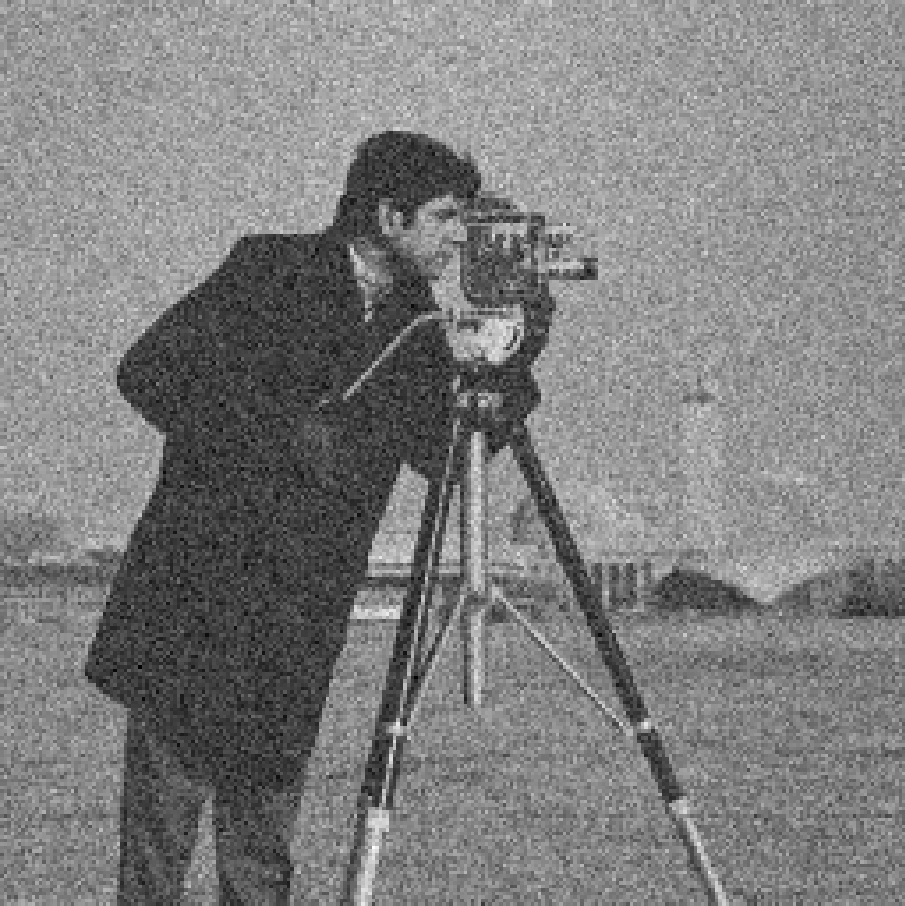}} \\
     & 10 iterations & 100 iterations & 1000 iterations & \\
  \end{tabular}

  \caption{Results after $10,100,1000$ iterations for the algorithms from Section~\ref{s:fb methods} with $\eta=12$, $\gamma=15$, $\tau=\frac{33}{4000}$, $\theta=10.1$ and $\sigma=(10.0,0.1)$. The image (cameraman), with and without additive Gaussian noise, is shown on the right.\label{f:rof2}}
\end{figure}

Figure~\ref{f:rof2} also suggests that the strengthened primal-dual method (S-PD)  performs slightly better than the strengthened Tseng's method (S-Tseng). Further computational results for the strengthened primal-dual method applied to four square test images with $n\in\{250,500,750,1000\}$ are shown in Figure~\ref{f:rof3}. The final change iterates (i.e., $\|\binom{x_k}{y_k}-\binom{x_{k-1}}{y_{k-1}}\|$), signal-to-noise ratio (SNR), final objective function value, and CPU after $k=100$ iterations can be found in Table~\ref{t:rof}. Figure~\ref{f:rof3} also shows a comparison with Chen \& Tang's method using the best performing algorithms parameters as described in \cite[Section~5]{chen2019iterative}. The results suggest that the performance of both methods is similar.

\begin{table}[!htb]
\caption{Results for $\eta=12$ after $100$ iterations of (left values) S-PD with $\gamma=15$ and (right values) Chen \& Tang's method from \cite{chen2019iterative}. Time is reported as the average from ten replications.\label{t:rof}}
\centering
\begin{tabular}{|l|r|rr|rr|rr|rr|} \hline
            & $n$    &  \multicolumn{2}{c|}{Change}  &  \multicolumn{2}{c|}{SNR}   & \multicolumn{2}{c|}{Objective Function} &   \multicolumn{2}{c|}{Time (s)} \\ \hline
Alicante    & 250    &  0.11 & 0.01    &  22.73 & 22.74    &   4 542.18 &   4 547.02   &  0.27 &  0.27 \\
G\"ottingen & 500    &  0.19 & 0.02    &  17.65 & 17.66    &  18 860.98 &  18 878.44   &  1.36 &  1.48 \\
Sydney      & 750    &  0.26 & 0.02    &  21.50 & 21.50    &  51 645.81 &  51 677.34   &  3.71 &  4.06 \\
Boston      & 1000   &  0.19 & 0.02    &  18.32 & 18.32    & 114 557.63 & 114 589.94   &  7.63 & 8.31 \\ \hline
\end{tabular}

\end{table}

\begin{figure}[!htb]
\begin{center}
  \begin{tabular}{lcccc}
  & Alicante & G\"ottingen & Sydney  & Boston \\[1ex]
   \rotatebox[origin=c]{90}{Original}
     & \raisebox{-0.5\height}{\includegraphics[width=2.8cm]{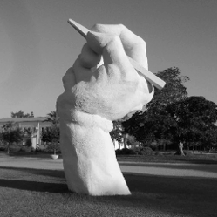}}
     & \raisebox{-0.5\height}{\includegraphics[width=2.8cm]{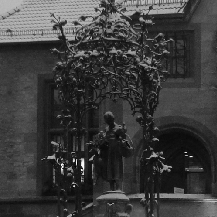}}
     & \raisebox{-0.5\height}{\includegraphics[width=2.8cm]{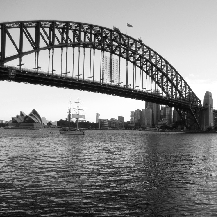}}
     & \raisebox{-0.5\height}{\includegraphics[width=2.8cm]{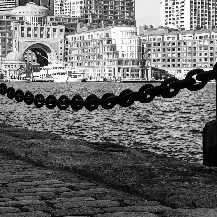}} \\[1.7cm]
   \rotatebox[origin=c]{90}{\parbox{2.7cm}{\centering Recovered\\ (S-PD)}}
     & \raisebox{-0.5\height}{\includegraphics[width=2.8cm]{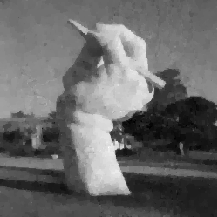}}
     & \raisebox{-0.5\height}{\includegraphics[width=2.8cm]{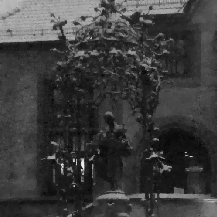}}
     & \raisebox{-0.5\height}{\includegraphics[width=2.8cm]{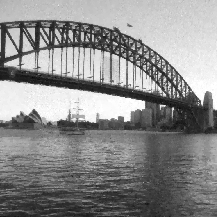}}
     & \raisebox{-0.5\height}{\includegraphics[width=2.8cm]{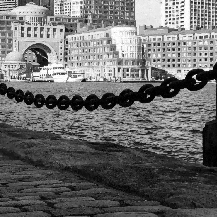}} \\[1.7cm]
   \rotatebox[origin=c]{90}{\parbox{2.7cm}{\centering Recovered\\ (Cheng--Tang)}}
     & \raisebox{-0.5\height}{\includegraphics[width=2.8cm]{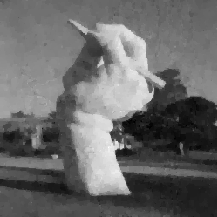}}
     & \raisebox{-0.5\height}{\includegraphics[width=2.8cm]{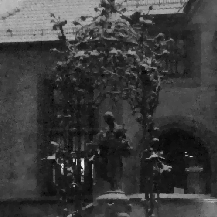}}
     & \raisebox{-0.5\height}{\includegraphics[width=2.8cm]{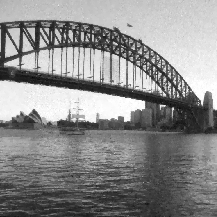}}
     & \raisebox{-0.5\height}{\includegraphics[width=2.8cm]{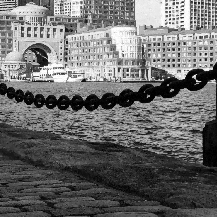}} \\[1.7cm]
   \rotatebox[origin=c]{90}{Noisy}
     & \raisebox{-0.5\height}{\includegraphics[width=2.8cm]{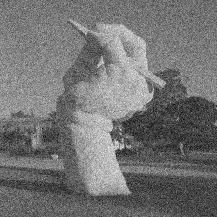}}
     & \raisebox{-0.5\height}{\includegraphics[width=2.8cm]{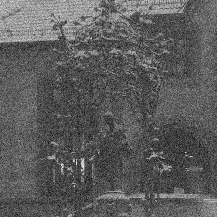}}
     & \raisebox{-0.5\height}{\includegraphics[width=2.8cm]{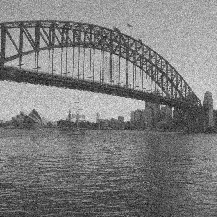}}
     & \raisebox{-0.5\height}{\includegraphics[width=2.8cm]{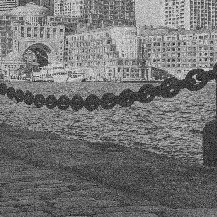}}\\
  \end{tabular}
\end{center}
\caption{Original, noisy and typical de-noised images for the  results reported in Table~\ref{t:rof}.\label{f:rof3}}
\end{figure}

\subsection{PDEs with Partially Blinded Laplacians}
In this subsection, we consider a problem in elliptic PDEs previously studied in~\cite[Section~5.2]{adly2019decomposition}. To this end, let $\Hilbert=\mathcal{L}^2(\Omega)$ denote the Hilbert space of real (Lebesgue) square-integrable functions defined on a nonempty open subset $\Omega\subset\R^d$ with a nonempty, bounded and Lipschitz boundary denoted by $\partial\Omega$. Let $L_+^2(\Omega)=\{v\in L^2(\Omega):v\geq 0\text{~a.e.\ on~}\Omega\}$ and $L^2_\Delta=\{v\in L^2(\Omega):\Delta v\in L^2(\Omega)\}$, where $\Delta$ denotes the \emph{Laplace operator}. Further, let $H^1_0(\Omega):=\{v\in H^1(\Omega):v_{|_{\partial\Omega}}=0\}$, where $H^1(\Omega)$ denotes the standard Sobolev space of functions having first order distribution derivatives in $L^2(\Omega)$ and $v_{|_{\partial\Omega}}$ denotes the \emph{trace} of $v$ on $\partial\Omega$ (see, for instance, \cite{attouch2014variational,brezis2011}). Given $u\in\Hilbert$, we denote $u^+:=\max\{u,0\}$ and $u^-:=\min\{u,0\}$ which are understood in the pointwise sense.

Let $f\in L^2(\Omega)$ be fixed. In this subsection, we consider the \emph{partially blinded problem} with homogeneous Dirichlet boundary condition
\begin{equation}\label{eq:PBP0}
\text{find~}u\in L^2(\Omega)\text{~such that~}u^+\in H^1_0(\Omega)\cap L^2_\Delta(\Omega)\text{~and~}-\Delta(u^+)+u=f, \tag{PBP$_0$}
\end{equation}
as well as the corresponding \emph{obstacle problem} given by
\begin{equation}\label{eq:OP0}
\text{find~}v\in L^2(\Omega)\text{~such that~}v\in H^1_0(\Omega)\cap L^2_\Delta(\Omega)\text{~and~}0\leq(-\Delta v+v-f)\perp v\geq 0.\tag{OP$_0$}
\end{equation}
As explained in \cite{adly2019decomposition}, \eqref{eq:PBP0} derives its name from the fact that the Laplacian operator is \emph{partially blinded} in the sense that diffusion only occurs on the nonnegative part of the unknown function $u$.

The following proposition collects results useful for solving these two problems.
\begin{proposition}\label{prop:pde}
Let $A:=N_{L^2_+(\Omega)}$ and let $B:=-\Delta:H^1_0(\Omega)\cap L^2_{\Delta}(\Omega)\to L^2(\Omega)$. Then:
\begin{enumerate}[(i)]
\item\label{it:pde_i} $A$, $B$ and $A+B$ are maximally monotone on $L^2(\Omega)$.
\item \label{it:pde_ii} Let $g\in L^2(\Omega)$ and let $\gamma>0$. Then $J_{\gamma A}(g)=g^+$ and $J_{\gamma B}(g)$ is the (unique) solution of linear boundary value problem given by
\begin{equation}\label{eq:LP0}
\text{find~}w\in L^2(\Omega)\text{~such that~}w\in H^1_0(\Omega)\cap L^2_\Delta(\Omega)\text{~and~}-\Delta w+\frac{1}{\gamma}w=\frac{1}{\gamma}g.\tag{LP$_0$}
\end{equation}
\item\label{it:pde_iii} The unique solution of \eqref{eq:PBP0} is given by $u:=\bigl(\Id-B\circ J_{A+B}\bigr)(f)$.
\item\label{it:pde_iv} The unique solution of \eqref{eq:OP0} is given by $v:=J_{A+B}(f)$.
\item\label{it:pde_v} The functions $u$ and $v$ satisfy $v=u^+$.
\end{enumerate}
\end{proposition}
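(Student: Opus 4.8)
The plan is to build everything on part~\eqref{it:pde_i}, and in particular on the maximal monotonicity of $A+B$, which guarantees (via Fact~\ref{f:Jalpha}) that $J_{A+B}$ is single-valued with full domain and hence that the points $u$ and $v$ in \eqref{it:pde_iii}--\eqref{it:pde_iv} are well defined. For $A=N_{L^2_+(\Omega)}$, maximal monotonicity is immediate from Example~\ref{ex:proxproj} since $L^2_+(\Omega)$ is nonempty, closed and convex. For $B=-\Delta$, I would identify it as the subdifferential of the Dirichlet energy $\Phi(v):=\tfrac12\int_\Omega\|\nabla v\|^2$ (set to $+\infty$ off $H^1_0(\Omega)$): this $\Phi$ is proper, lsc and convex, so $\partial\Phi$ is maximally monotone by Example~\ref{ex:proxproj}\eqref{ex:prox}, and Green's formula identifies $\partial\Phi$ with $-\Delta$ on $H^1_0(\Omega)\cap L^2_\Delta(\Omega)$ (monotonicity being the identity $\langle Bu-Bv,u-v\rangle=\|\nabla(u-v)\|^2\ge0$). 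Alternatively one may simply cite \cite{adly2019decomposition}.

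The crux of \eqref{it:pde_i} is the maximal monotonicity of $A+B$. Since $L^2_+(\Omega)$ has empty interior in $L^2(\Omega)$, the constraint qualifications of the standard sum theorems are awkward here, so I would instead verify Minty's range condition $\ran(\Id+A+B)=L^2(\Omega)$ directly. Given $g\in L^2(\Omega)$, the strongly convex, coercive functional $w\mapsto\tfrac12\|\nabla w\|^2+\tfrac12\|w\|^2-\langle g,w\rangle$ has a unique minimiser $w$ over the closed convex set $K:=H^1_0(\Omega)\cap L^2_+(\Omega)$; its weak variational inequality, together with elliptic regularity yielding $w\in L^2_\Delta(\Omega)$, gives after testing against $c=0$, $c=2w$ and nonnegative $H^1_0$-perturbations the relations $w\ge0$, $\langle -\Delta w+w-g,w\rangle=0$ and $-\Delta w+w-g\ge0$, whence $g-w+\Delta w\in N_{L^2_+(\Omega)}(w)$, i.e.\ $g\in(\Id+A+B)(w)$. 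I expect this step to be the main obstacle, with the elliptic regularity upgrade (so that $-\Delta w\in L^2(\Omega)$) being the delicate part, and the place where the Lipschitz-boundary hypothesis on $\Omega$ enters.

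Part \eqref{it:pde_ii} is then short: $J_{\gamma A}=J_{\gamma N_{L^2_+(\Omega)}}=P_{L^2_+(\Omega)}$ by Example~\ref{ex:proxproj}, and the projection onto the nonnegative cone is the pointwise positive part, so $J_{\gamma A}(g)=g^+$; while $w=J_{\gamma B}(g)$ means $w+\gamma Bw=g$, which rearranges to exactly \eqref{eq:LP0}, its unique solvability following from \eqref{it:pde_i}.

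For \eqref{it:pde_iv}, \eqref{it:pde_v} and \eqref{it:pde_iii} I would unwind the resolvent. Writing $v:=J_{A+B}(f)$ gives $f-v+\Delta v\in N_{L^2_+(\Omega)}(v)$ with $v\ge0$; using the conic description $N_{L^2_+(\Omega)}(v)=\{z\in L^2(\Omega):z\le0,\ \langle z,v\rangle=0\}$ and testing against $c=0$, $c=2v$ and general $c\ge0$ shows this inclusion is equivalent to $v\ge0$, $-\Delta v+v-f\ge0$ and $\langle -\Delta v+v-f,v\rangle=0$, which is precisely \eqref{eq:OP0}; together with $v\in\dom(A+B)\subseteq H^1_0(\Omega)\cap L^2_\Delta(\Omega)$ this proves \eqref{it:pde_iv}, uniqueness coming from single-valuedness of $J_{A+B}$. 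Setting $u:=(\Id-B\circ J_{A+B})(f)=f+\Delta v$, the same inclusion reads $u-v\in N_{L^2_+(\Omega)}(v)$, so $u-v\le0$, $v\ge0$ and $(u-v)v=0$ a.e.; a pointwise case split ($v=0$ or $u=v$) yields $u^+=v$, which is \eqref{it:pde_v}. Finally \eqref{it:pde_iii} follows since $-\Delta(u^+)+u=-\Delta v+(f+\Delta v)=f$ and $u^+=v\in H^1_0(\Omega)\cap L^2_\Delta(\Omega)$; for uniqueness, any solution $\tilde u$ of \eqref{eq:PBP0} satisfies $\tilde u-\tilde u^+=\tilde u^-\in N_{L^2_+(\Omega)}(\tilde u^+)$ and $\tilde u=f+\Delta\tilde u^+$, forcing $\tilde u^+=J_{A+B}(f)=v$ and hence $\tilde u=u$.
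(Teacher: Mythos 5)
Your route is genuinely different from the paper's. The paper's proof of Proposition~\ref{prop:pde} is essentially a citation: items \eqref{it:pde_i}, \eqref{it:pde_ii}, \eqref{it:pde_iv}, \eqref{it:pde_v} and the uniqueness claim in \eqref{it:pde_iii} are quoted from the proof of \cite[Proposition~5]{adly2019decomposition}, and the only thing actually derived there is the formula $u=(\Id-B\circ J_{A+B})(f)$, obtained by combining \eqref{it:pde_iv}, \eqref{it:pde_v} and \eqref{eq:PBP0} --- precisely the computation $u=f+\Delta(u^+)=f+\Delta v$ that you also perform, except that you run it in the opposite direction (you construct $u$ from the formula and verify it solves \eqref{eq:PBP0}, which additionally nets an independent existence proof). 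Everything else you do from scratch: identifying $B=\partial\Phi$ for the Dirichlet energy, Minty's criterion for the maximal monotonicity of $A+B$ via the obstacle problem, the description $N_{L^2_+(\Omega)}(v)=\{z\in L^2(\Omega): z\leq 0\ \text{a.e.},\ \langle z,v\rangle =0\}$ with the pointwise case split giving $u^+=v$, and the uniqueness of \eqref{eq:PBP0} via $\tilde u^-\in N_{L^2_+(\Omega)}(\tilde u^+)$. Your unwinding of \eqref{it:pde_ii}--\eqref{it:pde_v} from the resolvent inclusion is correct and complete given \eqref{it:pde_i}; what your approach buys is a self-contained treatment where the paper offers only a pointer, at the cost of length.

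The one step you assert rather than prove is the regularity $w\in L^2_\Delta(\Omega)$ of the minimiser in your Minty argument, and your diagnosis of it is slightly off. Testing the variational inequality with $c=w+\varphi$, $\varphi\in C_c^\infty(\Omega)$, $\varphi\geq 0$, only shows that $-\Delta w+w-g$ is a nonnegative distribution, hence a nonnegative measure; upgrading that measure to an $L^2$ function is not generic elliptic regularity and does not use the Lipschitz boundary at all. What is needed is the obstacle-problem-specific Lewy--Stampacchia inequality $0\leq -\Delta w+w-g\leq (-g)^+$ (or an equivalent penalisation argument), which holds on arbitrary open sets and immediately yields $\Delta w\in L^2(\Omega)$. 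With that classical result cited, your part \eqref{it:pde_i}, and hence the whole proposal, is complete; as written, the maximal monotonicity of $A+B$ rests on a true but unproven assertion. This is still no less rigorous than the paper itself, which delegates exactly this point to \cite{adly2019decomposition}.
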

\begin{proof}
Items \eqref{it:pde_i}, \eqref{it:pde_ii}, \eqref{it:pde_iv} and \eqref{it:pde_v} appear in the proof of \cite[Proposition~5]{adly2019decomposition} as well as the claimed uniqueness of the solution to \eqref{eq:PBP0} in \eqref{it:pde_iii}. The formula for $u$ follows by combining \eqref{it:pde_iv}, \eqref{it:pde_v} and \eqref{eq:PBP0}.
\end{proof}

Proposition~\ref{prop:pde} shows that to solve either \eqref{eq:PBP0} or \eqref{eq:OP0} it suffices to compute the resolvent of $A+B$ at $f$. Since the resolvent of $A$ and $B$ are both accessible, according to Proposition~\ref{prop:pde}\eqref{it:pde_ii}, the strengthened Douglas--Rachford method (S-DR, see Theorem~\ref{th:DR}) can be applied with $\theta=\sigma_A+\sigma_B$. Observe that the assumption $f\in\ran(\Id+A+B)$ in Theorem~\ref{th:DR} is automatically guaranteed, thanks to Proposition~\ref{prop:pde}\eqref{it:pde_i}. Note also that, as a simple linear boundary value problem, \eqref{eq:LP0} can be easily implemented using standard finite element method solvers. Following~\cite{adly2019decomposition}, we used the finite element library \emph{FreeFem}++ with a P1 finite element discretisation.

In our computational tests we set the parameter $\lambda=2$, which appears to be optimal for this problem. This empirical observation is in accordance with~\cite[Theorem~3.2]{aragon2019optimal}, which proves that the optimal rate of linear convergence of AAMR when it is applied to two subspaces is attained at $\kappa=\frac{\lambda}{2}=1$ (see also Remark~\ref{rem:AAMR}). The second pair of parameters we set was $\sigma_A=\sigma_B=0.25$. Observe that the behaviour of the iterative process~\eqref{eq:alg_DR} when $\sigma_A=\sigma_B$ is driven by the parameter $\gamma\sigma_A$, so in order to evaluate the variation of the algorithm's performance with respect to the parameters, one can either fix $\gamma$ or $\sigma_A$. We did not observe any apparent advantage of choosing $\sigma_A\neq\sigma_B$ in the current setting.

We compared the performance of S-DR for $\gamma\in\{0.1,0.2,\ldots,4.9,5.0\}$. Recall that for $\gamma=4$ the algorithm coincides with the one proposed by Adly--Bourdin (AB in short) in~\cite[Proposition~5]{adly2019decomposition}), see Remark~\ref{rem:AAMR}(ii). In our first experiment we used the data function $f(x,y)=xe^{-x^2-y^2}$ with $\Omega$ the open disk of centre $(0,0)$ and radius $\frac{3\pi}{2}$, which was tested in~\cite{adly2019decomposition}. We began by computing with AB an approximate solution $v_{AB}$ to $J_{A+B}(f)$ by running the algorithm for 10000 iterations with a mesh constructed by \emph{FreeFem}++ with 200 points in the boundary, see Figures~\ref{fig:solAdly_a} and \ref{fig:solAdly_b}. Then, we ran S-DR for each $\gamma\in\{0.1,0.2,\ldots,4.9,5\}$. The algorithm was stopped when the norm of the difference between $v_{AB}$ and the current iterate was smaller than a given precision of $10^{-p}$, with $p\in\{5,6,\ldots,10\}$. The results are summarised in Figure~\ref{fig:Adly}. A value of $\gamma$ between $0.4$ and $0.6$ appears to be optimal for all tested values of $p$. In particular, for $\gamma=0.5$ and $p=10$, S-DR was 8 times faster than AB.

\begin{figure}[!htbp]
\centering
  \begin{subfigure}[c]{0.48\textwidth}
     \includegraphics[height=.74\textwidth]{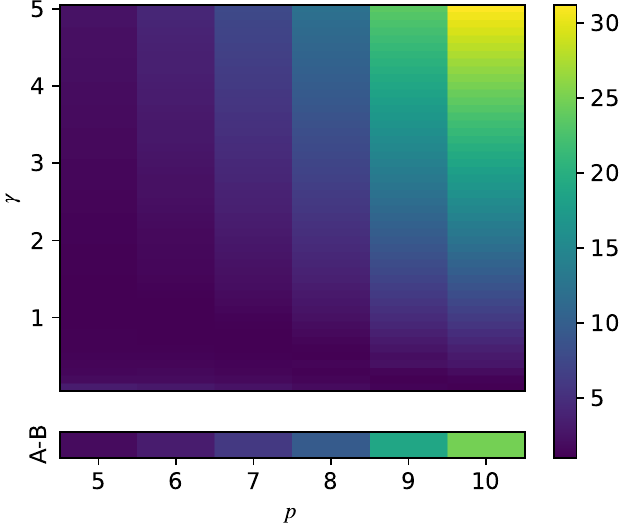}
%     \subcaption{Time.}
  \end{subfigure}
  \begin{subfigure}[c]{0.48\textwidth}
     \includegraphics[height=.74\textwidth]{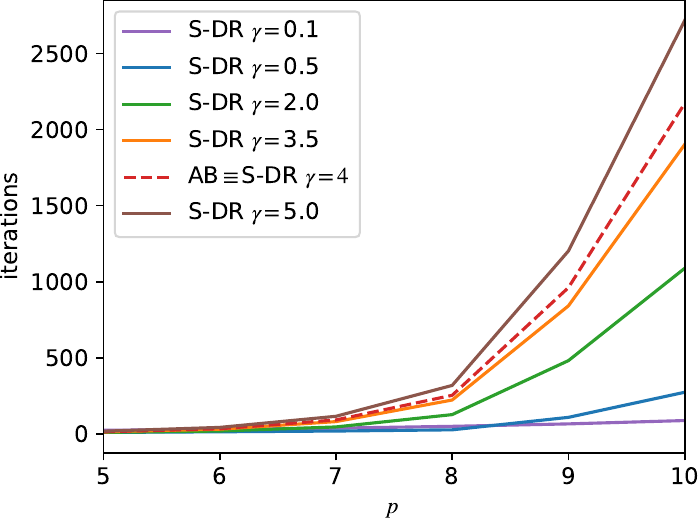}
%     \subcaption{Time.}
  \end{subfigure}
\caption{(Left) For each precision $10^{-p}$, we computed the minimum number of iterations among AB and S-DR for each $\gamma\in \{0.1,0.2,\ldots,4.9,5\}$. The figure is coloured according to the ratio between the number of iterations of each of the methods and that minimum number. (Right) The number of iterations of AB and S-DR for different values of $\gamma$.}\label{fig:Adly}
\end{figure}

In our second experiment we used the function
\begin{equation}
f(x,y)=\left\{\begin{array}{ll}
-2\left((10y\pi-5y^2+1)\cos(x)^2-4y\pi+2y^2-1\right)\sin(x), & x\leq\pi,\\
(2\pi-y)y\cos(x)^2\sin(x)^3, & x>\pi;
\end{array}\right.\label{eq:f_Matt}
\end{equation}
with $\Omega=(0,2\pi)\times (0,2\pi)$. In this case, the unique solution of~\eqref{eq:PBP0} can be analytically computed and is given by $u(x,y):=(2\pi-y)y\sin(x)^3$, see Figures~\ref{fig:solAdly_c} and \ref{fig:solAdly_d}. Therefore, it is possible to test how good the approximate solution given by each of the algorithm settings is. In Figure~\ref{fig:Adly2} we show the result of running the algorithms for $10000$ iterations, again with a mesh with $200$ points in the boundary.

\begin{figure}[!htbp]
\centering
  \begin{subfigure}[c]{0.48\textwidth}
     \includegraphics[height=.75\textwidth]{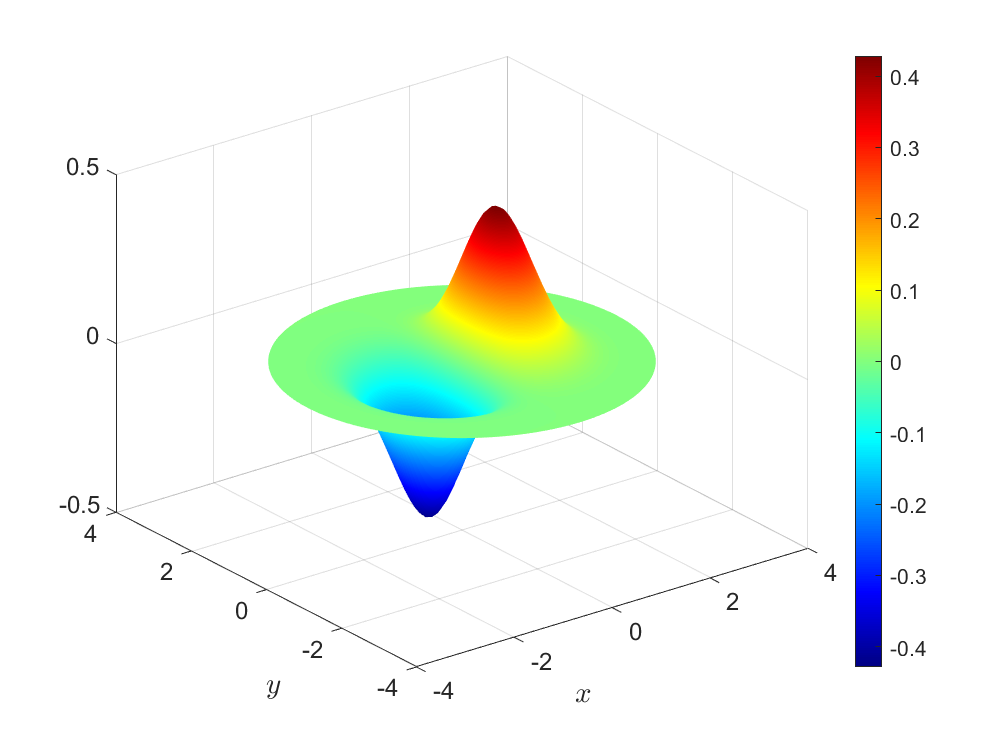}
     \subcaption{Plot of the data $f(x,y)=xe^{-x^2-y^2}$.\label{fig:solAdly_a}}%\\ with $\Omega=\left\{(x,y)\in\mathbb{R}^2: x^2+y^2< \frac{9\pi^2}{4}\right\}$.}
  \end{subfigure}\quad
  \begin{subfigure}[c]{0.48\textwidth}
     \includegraphics[height=.75\textwidth]{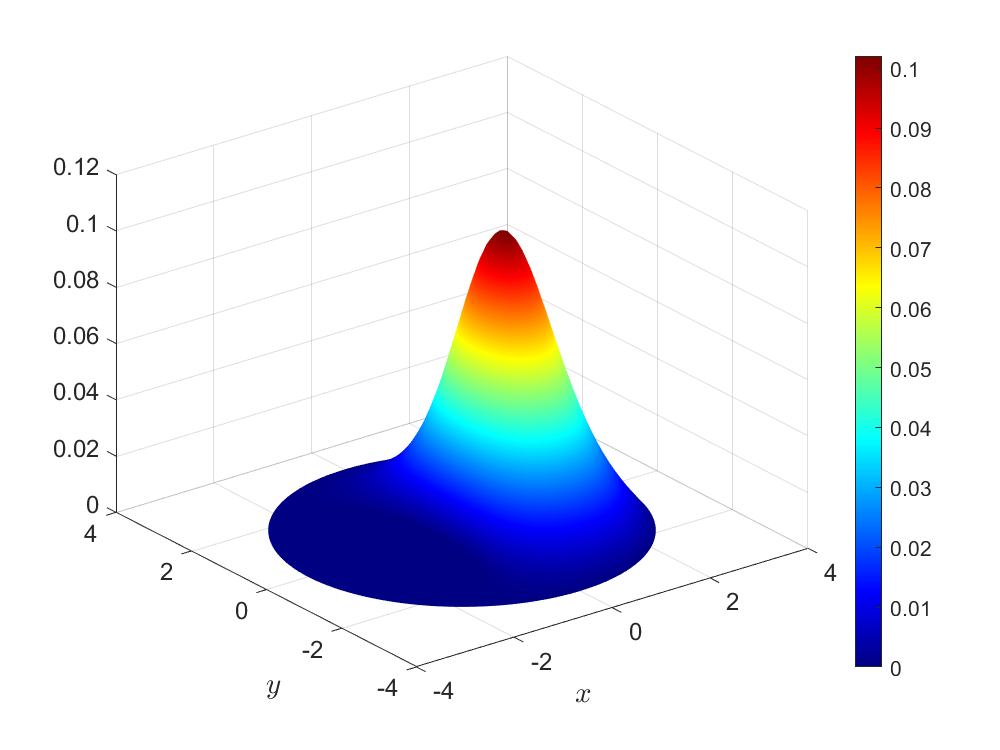}
     \subcaption{For $f(x,y)=xe^{-x^2-y^2}$, plot of the solution of~\eqref{eq:OP0} computed with AB.\label{fig:solAdly_b}}
  \end{subfigure}\\
  \begin{subfigure}[c]{0.48\textwidth}
     \includegraphics[height=.75\textwidth]{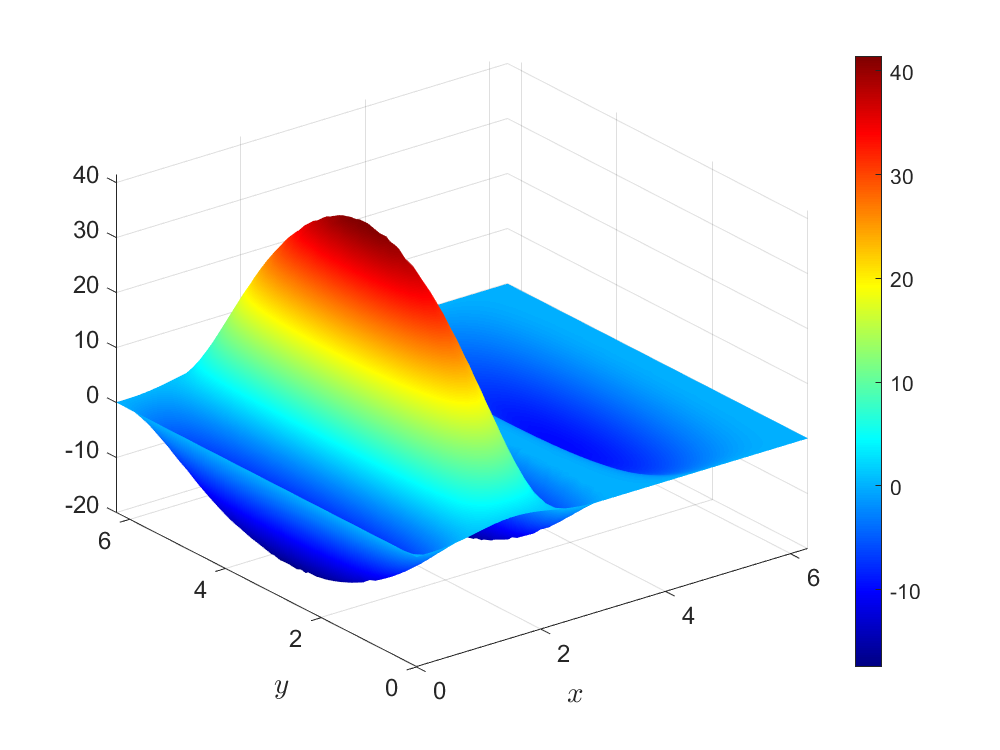}
     \subcaption{Plot of the data~\eqref{eq:f_Matt}.\label{fig:solAdly_c}}% with\\ $\Omega=(0,2\pi)\times (0,2\pi)$.}
  \end{subfigure}\quad
  \begin{subfigure}[c]{0.48\textwidth}
     \includegraphics[height=.75\textwidth]{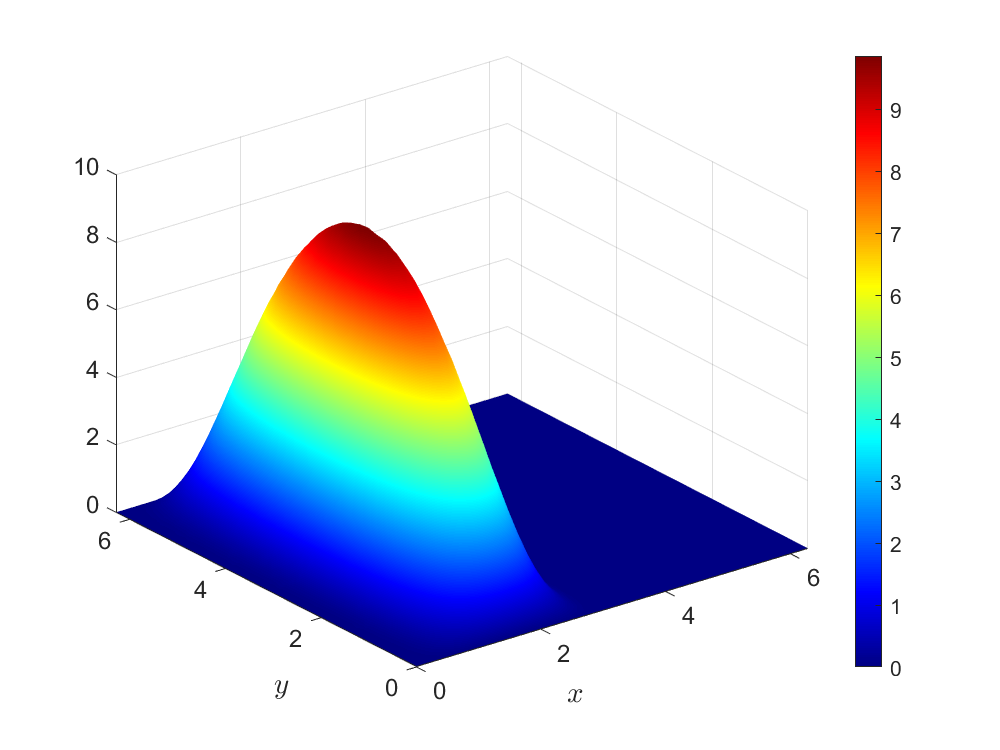}
     \subcaption{For the data~\eqref{eq:f_Matt}, plot of the solution  of~\eqref{eq:OP0} $v(x)=\max\{0,(2\pi-y)y\sin(x)^3\}$.\label{fig:solAdly_d}}
  \end{subfigure}
\caption{Representation of the data (left) and the solution (right) of two~\eqref{eq:OP0} problems.}\label{fig:solAdly}
\end{figure}
\begin{figure}[!htbp]
\centering
  \begin{subfigure}[c]{0.48\textwidth}
     \includegraphics[height=.7\textwidth]{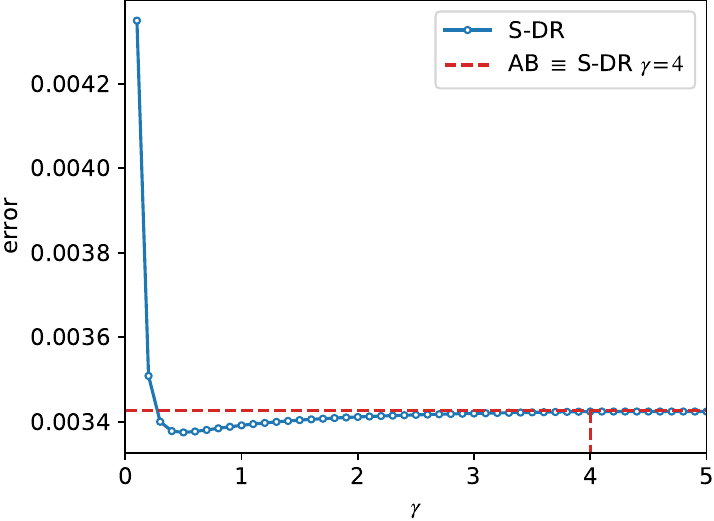}
%     \subcaption{Time.}
  \end{subfigure}
  \begin{subfigure}[c]{0.48\textwidth}
     \includegraphics[height=.7\textwidth]{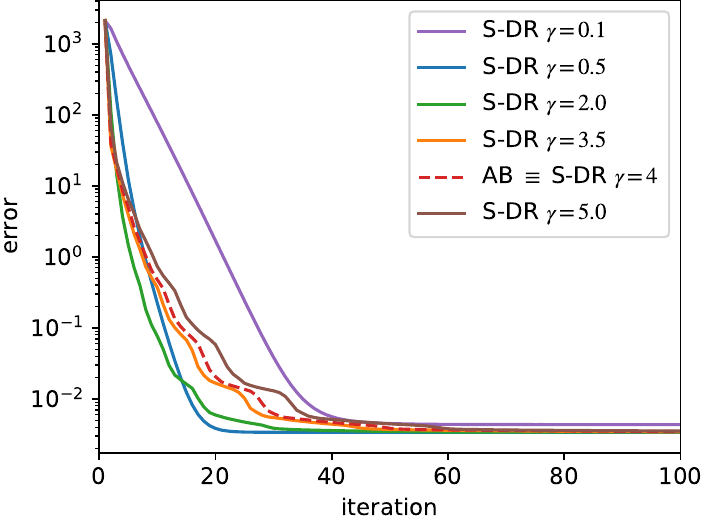}
%     \subcaption{Time.}
  \end{subfigure}
\caption{(Left) The error in the solution of AB and S-DR with parameter $\gamma\in\{0.1,0.2,\ldots,4.9,5\}$ after running the algorithms for $10000$ iterations. (Right) The error (in log.\ scale) as a function of iterations for different values of $\gamma$.}\label{fig:Adly2}
\end{figure}

Therefore,  when $\sigma_A=\sigma_B$, the results in these experiments suggest that the value of $\gamma\sigma_A$ has a considerable effect on the performance of S-DR. If this value is chosen too small, the solution found may be more inaccurate, while a large value can slow down the algorithm. A value of $\gamma\sigma_A=0.125$ appeared to be optimal in terms of the number of iterations and the error for both of the function data considered.

\paragraph{{\small Acknowledgements}}{\small
\hspace{-2mm}The authors are thankful to Samir Adly for willingly sharing the FreeFem++ code from~\cite{adly2019decomposition}, and to the two anonymous referees whose comments helped to improve the paper. The authors would also like to thank Heinz Bauschke and Walaa Moursi for pointing out reference \cite{giselsson_tight} and providing us the proof of the case $\lambda=2$ in Theorem~\ref{th:DR}.

FJAA and RC were partially supported by the Ministry of Science, Innovation and Universities of Spain and the European Regional Development Fund (ERDF) of the European Commission, Grant PGC2018-097960-B-C22. MKT is supported in part by ARC grant DE200100063.}

\appendix

\section{Proof of Convergence for Ryu Splitting}\label{s:appendix}
In this appendix, we offer a direct proof for convergence of the method proposed in \cite[Section~4]{ryu2019uniqueness}. In doing so, we also extend the result to the infinite dimensional setting as well as establishing weak convergence of the shadow sequence, which is required to prove Theorem~\ref{th:ryu}.

Let $A,B,C\colon\Hilbert\setto\Hilbert$ be maximally monotone operators. Consider the problem
 $$ \text{find~}x\in\Hilbert\text{ such that }0\in (A+B+C)(x). $$
Note that $u\in\zer(A+B+C)$ if and only if there exist points $x,y\in\Hilbert$ such that
  $$ x-u\in \gamma A(u),\qquad y\in \gamma B(u),\qquad u-x-y\in \gamma C(u). $$
Using the definition of the resolvent, the latter is equivalent to
\begin{equation}\label{eq:uxy}
 u = J_{\gamma A}(x),\qquad u = J_{\gamma B}(u+y), \qquad u = J_{\gamma C}(2u-x-y).
\end{equation}
Consequently, we have $\zer(A+B+C)\neq\emptyset$ if and only if $\Omega\neq\emptyset$, where
$$ \Omega := \bigl\{(x,y)\in\Hilbert\times\Hilbert: J_{\gamma A}(x)=J_{\gamma B}(J_{\gamma A}(x)+y)= J_{\gamma C}(2J_{\gamma A}(x)-x-y)\bigr\}. $$

Recall that an operator $A\colon\Hilbert\setto\Hilbert$ is said to be \emph{uniformly monotone} with modulus $\phi\colon\R_+\to{[0,+\infty[}$ if $\phi$ is increasing, vanishes only at $0$ and
$$ \langle x-y,u-v\rangle \geq \phi(\|x-y\|)\quad\forall(x,u),(y,v)\in\gra A. $$

\begin{theorem}[Ryu splitting]\label{th:ryu appendix}
%(z_1,z_2) -> (x_k,y_k), (x_1,x_2,x_3) -> (u,v,w)
Let $A,B,C\colon\Hilbert\setto\Hilbert$ be maximally monotone operators with $\zer(A+B+C)\neq \emptyset$. Let $\gamma>0$ and $\lambda>0$. Given some initial points $x_0,y_0\in\Hilbert$, consider the sequences given by
\begin{equation}\label{eq:ryu splitting}
\left\{\begin{aligned}
    u_k &= J_{\gamma A}(x_k) \\
    v_k &= J_{\gamma B}(u_k+y_k) \\
    w_k &= J_{\gamma C}(u_k-x_k+v_k-y_k) \\
    x_{k+1} &= x_k + \lambda(w_k-u_k) \\
    y_{k+1} &= y_k + \lambda(w_k-v_k).
\end{aligned}\right.
\end{equation}
The following assertions hold.
\begin{enumerate}[(i)]
\item\label{it:ryu a i} If $\lambda\in{]0,1[}$, then $(x_k,y_k)\wto(x,y)\in\Omega$, $u_k\wto u, v_k\wto u$ and $w_k\wto u$ with
\begin{equation}\label{eq:shadow omega}
 u=J_A(x)=J_{\gamma B}(J_{\gamma A}(x)+y)= J_{\gamma C}(2J_{\gamma A}(x)-x-y)\in\zer(A+B+C).
\end{equation}
\item\label{it:ryu a ii} If $\lambda\in{]0,1]}$ and any of $A,B$ or $C$ is uniformly monotone, then $(u_k), (v_k)$ and $(w_k)$ converge strongly.
\end{enumerate}
\end{theorem}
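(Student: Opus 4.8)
The plan is to recognise \eqref{eq:ryu splitting} as a relaxed fixed-point iteration of a single nonexpansive operator on the product space $\Hilbert\times\Hilbert$ and then to run the standard Krasnoselskii--Mann machinery, upgrading the conclusions with a demiclosedness argument tailored to the three resolvents. First I would define $S\colon\Hilbert\times\Hilbert\to\Hilbert\times\Hilbert$ by $S(x,y):=(x+w-u,\,y+w-v)$, where $u,v,w$ are produced from $(x,y)$ by the first three lines of \eqref{eq:ryu splitting}; the last two lines then read $(x_{k+1},y_{k+1})=(1-\lambda)(x_k,y_k)+\lambda S(x_k,y_k)$. Using that $J_{\gamma A},J_{\gamma B},J_{\gamma C}$ are firmly nonexpansive (equivalently, the monotonicity inequalities coming from $x-u\in\gamma A(u)$, $u+y-v\in\gamma B(v)$ and $u-x+v-y-w\in\gamma C(w)$), a direct expansion yields the identity
$$\|S(x,y)-S(x',y')\|^2=\|(x,y)-(x',y')\|^2-\|(u-v)-(u'-v')\|^2-2\bigl(s_A+s_B+s_C\bigr),$$
where $s_A,s_B,s_C\ge0$ are the monotonicity gaps of $A,B,C$ along the two evaluations. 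In particular $S$ is nonexpansive, and comparison with \eqref{eq:uxy} shows $\Fix S=\Omega$, which is nonempty by hypothesis.

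Evaluating this identity at a fixed point $z^\ast=(x^\ast,y^\ast)\in\Omega$ with shadow $u^\ast\in\zer(A+B+C)$, and combining it with the averaged-operator identity $\|(1-\lambda)p+\lambda q\|^2=(1-\lambda)\|p\|^2+\lambda\|q\|^2-\lambda(1-\lambda)\|p-q\|^2$, yields the Fej\'er-type recursion
$$V_{k+1}=V_k-\lambda\|u_k-v_k\|^2-2\lambda\bigl(s_A^{(k)}+s_B^{(k)}+s_C^{(k)}\bigr)-\lambda(1-\lambda)\bigl(\|w_k-u_k\|^2+\|w_k-v_k\|^2\bigr),$$
with $V_k:=\|x_k-x^\ast\|^2+\|y_k-y^\ast\|^2$. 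Thus $(z_k)$ is Fej\'er monotone with respect to $\Omega$, and every nonnegative term on the right is summable; so $u_k-v_k\to0$ and $s_A^{(k)},s_B^{(k)},s_C^{(k)}\to0$ for all $\lambda\in{]0,1]}$, while $w_k-u_k\to0$ and $w_k-v_k\to0$ whenever $\lambda\in{]0,1[}$.

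For (i) I would invoke the Krasnoselskii--Mann theorem \cite{bauschke2017}: for $\lambda\in{]0,1[}$ the sequence $(z_k)$ converges weakly to some $\bar z=(x,y)$ with $Sz_k-z_k\to0$, whereupon the demiclosedness principle for nonexpansive operators forces $\bar z\in\Fix S=\Omega$, and \eqref{eq:uxy} identifies $u:=J_{\gamma A}(x)\in\zer(A+B+C)$ satisfying \eqref{eq:shadow omega}. Since $w_k-u_k\to0$ and $w_k-v_k\to0$, the three shadow sequences share their weak cluster points, so it remains to show each such cluster point equals $u$. This is the main obstacle: resolvents are not weakly continuous, so one must pass to the limit in the three inclusions above using the weak--weak closedness of the graphs of maximally monotone operators, and the vanishing of the gaps $s_A^{(k)},s_B^{(k)},s_C^{(k)}$ is precisely what supplies the $\limsup$ condition needed for that closedness (this is the analogue of Svaiter's shadow-convergence argument for Douglas--Rachford).

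For (ii) I would keep the same recursion but exploit uniform monotonicity to convert the relevant gap into a modulus: if, say, $A$ is uniformly monotone with modulus $\phi$, then $s_A^{(k)}\ge\gamma\phi(\|u_k-u^\ast\|)$, and summability forces $\phi(\|u_k-u^\ast\|)\to0$, hence $u_k\to u^\ast$ strongly (and symmetrically $v_k\to u^\ast$ if $B$, or $w_k\to u^\ast$ if $C$, is the uniformly monotone summand; note that uniform monotonicity of any summand makes $A+B+C$ strictly monotone, so $u^\ast$ is the \emph{unique} zero). For $\lambda\in{]0,1[}$ the residuals $w_k-u_k,w_k-v_k\to0$ together with $u_k-v_k\to0$ immediately propagate strong convergence to the remaining two shadows. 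The delicate point is again the endpoint $\lambda=1$, where $\|w_k-u_k\|^2+\|w_k-v_k\|^2$ is no longer summable; there I would combine $u_k-v_k\to0$ with a strong--weak closedness argument for the remaining maximally monotone operator to conclude that the third shadow also converges strongly to $u^\ast$.
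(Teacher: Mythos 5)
Your part (i) is correct, but it takes a genuinely different route from the paper, and its computational core checks out: expanding the monotonicity gaps exactly as in \eqref{eq:monoA}--\eqref{eq:monoC} confirms both your nonexpansiveness identity for $S$ and your recursion for $V_k$, which is in fact \emph{sharper} than the paper's \eqref{eq:ryu key} (the paper throws away the gaps at the outset and loses $\lambda\|u_k-v_k\|^2$ through Young's inequality). The paper never invokes Krasnosel'skii--Mann: it proves Fej\'er monotonicity by hand and identifies weak limits by observing that $(u_k-w_k,\,v_k-w_k,\,u_k-w_k)$, which converges strongly to zero, lies in the image under a single maximally monotone operator $T$ (block operator built from $(\gamma A)^{-1},(\gamma B)^{-1},\gamma C$ plus a skew matrix) of a weakly convergent argument, so that weak--strong sequential closedness of $\gra T$ suffices. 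Your route instead needs the $\limsup$ refinement of graph closedness (graphs of maximally monotone operators are \emph{not} weakly--weakly sequentially closed), and there is a trap here: vanishing of $s_A^{(k)}$ alone does not deliver the $\limsup$ inequality for $A$ separately --- unwinding it, the missing inequality is precisely firm nonexpansiveness of $J_{\gamma A}$ at the putative limit pair, i.e.\ what you are trying to prove, so the operator-by-operator argument is circular. The certificate must be assembled jointly: apply the closedness lemma to $\gamma A\times\gamma B\times\gamma C$ on $\Hilbert^3$ with $p_k=(u_k,v_k,w_k)$ and $q_k=(x_k-u_k,\,u_k+y_k-v_k,\,u_k-x_k+v_k-y_k-w_k)$; the limit pair $(p,q)$ satisfies $\langle p,q\rangle=0$ because the components of $q$ sum to zero while those of $p$ coincide, and $\limsup_k\langle p_k,q_k\rangle\le 0$ then follows either from your vanishing gaps (the cross terms vanish by the same cancellation) or, more simply, from $w_k-u_k\to0$, $w_k-v_k\to0$ strongly together with boundedness. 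With that repair, your part (i) is a complete and legitimate alternative to the paper's skew-operator argument.

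Part (ii), however, has a genuine gap at the endpoint $\lambda=1$, and it cannot be closed. Your final step --- ``combine $u_k-v_k\to0$ with a strong--weak closedness argument for the remaining maximally monotone operator'' --- is not an argument: closedness identifies the limit of a sequence that already converges in some topology; it cannot create convergence. In fact the third shadow need not converge at all. Take $\Hilbert=\R^2$, $\gamma=1$, $\lambda=1$, $A=B=N_{\{0\}}$, and let $C=R$ be the rotation by $\pi/2$ (maximally monotone because it is skew). Then $A$ is uniformly monotone with any modulus (vacuously, since $\dom A=\{0\}$), and $\zer(A+B+C)=\{0\}\neq\emptyset$, so all hypotheses hold. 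The iteration \eqref{eq:ryu splitting} gives $u_k=v_k=0$ for all $k$ and, writing $s_k:=x_k+y_k$,
\begin{equation*}
w_k=J_C(-s_k)=-J_C(s_k),\qquad s_{k+1}=s_k+2w_k=(\Id-2J_C)(s_k)=R\,s_k,
\end{equation*}
where we used $J_C=\tfrac12(\Id-R)$ (a consequence of $R^2=-\Id$). Hence $w_{k+1}=Rw_k$: the sequence $(w_k)$ cycles with period $4$ and constant norm $\|s_0\|/\sqrt{2}$, so it diverges whenever $x_0+y_0\neq0$, even though $(u_k)$ and $(v_k)$ converge strongly. If you object that $N_{\{0\}}$ is degenerate, replace $A=B$ by $\Id+\partial\|\cdot\|_1$ (full domain, $1$-strongly monotone) and start with $x_0=y_0$ small; the orbit stays in the region where these resolvents vanish identically, and the same cycling occurs.

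For perspective: this is not only a gap in your proposal --- assertion (ii) as stated is false at $\lambda=1$, and the paper's own proof is silently incomplete there too, since with $A$ uniformly monotone it establishes only $u_k\to u$ and recovers the other two sequences from $w_k-u_k\to0$, $w_k-v_k\to0$, which are available only for $\lambda<1$. Your sharper recursion actually does strictly better than the paper at $\lambda=1$: it yields $u_k-v_k\to0$, so when $A$ or $B$ is the uniformly monotone summand both $(u_k)$ and $(v_k)$ converge strongly; but nothing can force $(w_k)$ to converge, as the example shows. The damage is contained downstream: in the only application (Theorem~\ref{th:ryu}), Assumption~\ref{a:2} makes all three strengthened operators strongly monotone, so each of $(u_k),(v_k),(w_k)$ has its own gap modulus and your modulus argument (which is the same as the paper's, and is correct) applies to each of them separately.
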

\begin{proof}
\eqref{it:ryu a i}:~Let $(x,y)\in\Omega$ and denote $u:=J_{\gamma A}(x)$. Since $x-u\in\gamma A(u)$ and  $x_k-u_k\in \gamma A(u_k)$, monotonicity of $\gamma A$ implies
 \begin{equation}\label{eq:monoA}
 \begin{aligned}
  0 &\leq \langle (x-u)-(x_k-u_k),u-u_k \rangle \\
    &= \langle (x-u)-(x_k-u_k),u-w_k \rangle + \langle (x-u)-(x_k-u_k),w_k-u_k \rangle.
 \end{aligned}
 \end{equation}
Since $y\in \gamma B(u)$ and $u_k+y_k-v_k\in \gamma B(v_k)$, monotonicity of $\gamma B$ implies
\begin{equation}\label{eq:monoB}
\begin{aligned}
0 &\leq \langle y-(u_k+y_k-v_k),u-v_k\rangle \\
  &=   \langle y-(u_k+y_k-v_k),u-w_k\rangle + \langle y-(u_k+y_k-v_k),w_k-v_k\rangle.
\end{aligned}
\end{equation}
Since $u-x-y\in \gamma C(u)$ and $u_k-x_k+v_k-y_k-w_k\in \gamma C(w_k)$, monotonicity of $\gamma C$ implies
\begin{equation}\label{eq:monoC}
\begin{aligned}
 0 &\leq \langle(u-x-y)-(u_k-x_k+v_k-y_k-w_k),u-w_k\rangle \\
   &= \langle w_k-u_k,u-w_k\rangle - \langle(x-u)-(x_k-u_k),u-w_k\rangle
      -\langle y -(u_k+y_k-v_k),u-w_k\rangle.
\end{aligned}
\end{equation}
Summing together \eqref{eq:monoA}, \eqref{eq:monoB} and \eqref{eq:monoC} yields
\begin{equation}\label{eq:combined}
\begin{aligned}
0 &\leq  \langle (x-u)-(x_k-u_k),w_k-u_k \rangle + \langle y-(u_k+y_k-v_k),w_k-v_k\rangle + \langle w_k-u_k,u-w_k\rangle \\
  &= \langle x-x_k,w_k-u_k\rangle - \|w_k-u_k\|^2 + \langle y-y_k,w_k-v_k\rangle -\|w_k-v_k\|^2 + \langle w_k-u_k,w_k-v_k\rangle.
\end{aligned}
\end{equation}
The first and second terms in \eqref{eq:combined} can be expressed as
\begin{align*}
2\lambda \left( \langle x-x_k,w_k-u_k\rangle - \|w_k-u_k\|^2\right)
 &= 2\langle x-x_k,x_{k+1}-x_k\rangle - \frac{2}{\lambda}\|x_{k+1}-x_k\|^2 \\
 &= \|x_k-x\|^2 - \|x_{k+1}-x\|^2 - \left(\frac{2}{\lambda}-1\right)\|x_{k+1}-x_k\|^2.
\intertext{Similarly, the third and fourth terms in \eqref{eq:combined} can be written as}
2\lambda \left( \langle y-y_k,w_k-v_k\rangle -\|w_k-v_k\|^2 \right)
 &= \|y_k-y\|^2 - \|y_{k+1}-y\|^2 - \left(\frac{2}{\lambda}-1\right)\|y_{k+1}-y_k\|^2.
\end{align*}
The last term in \eqref{eq:combined} can be estimated as
$$ 2\lambda \langle w_k-u_k,w_k-v_k\rangle = \frac{2}{\lambda}\langle x_{k+1}-x_k,y_{k+1}-y_k \rangle \leq \frac{1}{\lambda}\|x_{k+1}-x_k\|^2 + \frac{1}{\lambda}\|y_{k+1}-y_k\|^2. $$
Altogether, we have
\begin{equation}
\|x_{k+1}-x\|^2
+\|y_{k+1}-y\|^2  + \left(\frac{1-\lambda}{\lambda}\right)\left(\|x_{k+1}-x_k\|^2+\|y_{k+1}-y_k\|^2\right)
\leq \|x_k-x\|^2 + \|y_k-y\|^2,\label{eq:ryu key}
\end{equation}
where we note that $\frac{1-\lambda}{\lambda}>0$ since $\lambda\in{]0,1[}$. For convenience, denote $z_k=(x_k,y_k)$ and $z=(x,y)$. Then \eqref{eq:ryu key} implies that $(z_k)$ is Fej\'er monotone with respect to $\Omega$ (so in particular, $(\|z_k-z\|)$ is nonincreasing and convergent, see e.g.~\cite[Proposition~5.4]{bauschke2017}), and that $z_{k+1}-z_k\to(0,0)$. Since resolvents are nonexpansive, it follows that $(u_k)$, $(v_k)$ and $(w_k)$ are bounded, and that $w_k-u_k\to 0$ and $w_k-v_k\to0$.

Let $\bar{z}=(\bar{x},\bar{y})$ be a weak sequential cluster point of the bounded sequence $(z_k)$. Then there exists a weak sequential cluster cluster point $\bar{u}$ of $(u_k)$ such that there exists a subsequence of $(z_{k},u_{k})$ which converges weakly to $(\bar{z},\bar{u})$. Now, from \eqref{eq:ryu splitting}, it follows that
$$ \colvec{u_k-w_k\\v_k-w_k\\u_k-w_k} \in T\colvec{x_k-u_k\\ u_k+y_k-v_k\\ w_k} := \left(\colvec{(\gamma A)^{-1}\\ (\gamma B)^{-1}\\ \gamma C}+\colvec{0&0&-\Id\\0&0&-\Id\\\Id&\Id&0}\right)\colvec{x_k-u_k\\ u_k+y_k-v_k\\ w_k} $$
where we note that the operator $T\colon\Hilbert^3\setto\Hilbert^3$ is maximally monotone as the sum of a maximally monotone operator and a skew-symmetric matrix (see, e.g., \cite[Example~20.35~\&~Corollary~25.5(i)]{bauschke2017}). Since the graph of a maximally monotone operator is sequentially closed in the weak-strong topology \cite[Proposition~20.38]{bauschke2017}, taking the limit along a subsequence of $(z_k,u_k)$ which converges weakly to $(\bar{z},\bar{u})$ (and noting that $w_k-u_k\to 0$ and $w_k-v_k\to0$, which  imply $u_k-v_k\to 0$ and $w_k\wto\bar{u}$) yields
\begin{equation}\label{eq:ryu limit}
\colvec{0\\0\\0} \in \left( \colvec{(\gamma A)^{-1}\\ (\gamma B)^{-1}\\ \gamma C}+\colvec{0&0&-\Id\\0&0&-\Id\\\Id&\Id&0}\right)\colvec{\bar{x}-\bar{u}\\ \bar{y}\\ \bar{u}} \implies \left\{\begin{array}{l}\bar{z}=(\bar{x},\bar{y})\in\Omega\\ \bar{u}=J_{\gamma A}(\bar{x}).\end{array} \right.
\end{equation}
In particular, this shows that every weak sequential cluster point of $(z_k)$ is contained in $\Omega$, and so \cite[Theorem~5.5]{bauschke2017} implies that $(z_k)$ converges weakly to a point $\bar{z}\in\Omega$. Then \eqref{eq:ryu limit} shows that $\bar{u}=J_{\gamma A}(\bar{x})$ is necessarily the unique cluster point of $(u_k)$, and hence $u_k\wto\bar{u}$. It follows that $v_k\wto\bar{u}$ and $w_k\wto\bar{u}$. The fact that $\bar{u}$ satisfies \eqref{eq:shadow omega} follows by the argued in after \eqref{eq:uxy}.

\eqref{it:ryu a ii}:~Suppose $A$ is uniformly monotone with modulus $\phi$. Then, using uniform monotoncity of $A$ in place of monotonicity in \eqref{eq:monoA}, yields the stronger inequality
$$ \gamma\phi\big(\|u_k-u\|\bigl) \leq \langle (x-u)-(x_k-u_k),u-w_k \rangle + \langle (x-u)-(x_k-u_k),w_k-u_k \rangle. $$
By propagating this inequality through the remainder of the proof, noting that $\lambda\in{]0,1]}$, \eqref{eq:ryu key} becomes
$$ \|x_{k+1}-x\|^2 +\|y_{k+1}-y\|^2 + 2\lambda\gamma\phi\bigl(\|u_k-u\|\bigr) \leq \|x_k-x\|^2 + \|y_k-y\|^2. $$
From this it follows that $\phi\bigl(\|u_k-u\|\bigr)\to0$ and hence that $u_k\to u$. When $B$ (resp.\ $C$) is uniformly monotone, the result follows by an analogous argument by modifying \eqref{eq:monoB} (resp.\ \eqref{eq:monoC}).
\end{proof}

\end{document}